\definecolor{red}{rgb}{1.0,0.0,0.0}
\definecolor{blu}{rgb}{0.0,0.0,1.0}
\definecolor{gre}{rgb}{0.03,0.50,0.03}
\newtheorem{theorem}{Theorem}[section]
\newtheorem{proposition}[theorem]{Proposition}
\newtheorem{lemma}[theorem]{Lemma}
\newtheorem{cor}[theorem]{Corollary}
\theoremstyle{definition}
\newtheorem{definition}[theorem]{Definition}
\theoremstyle{definition}
\newtheorem{remark}[theorem]{Remark}
\newtheorem{assumption}[theorem]{Assumption}
\numberwithin{equation}{section}
\newcommand{\bfone}{{\mathbf{1}}}
\newcommand{\eps}{{\varepsilon}}
\newcommand{\mail}[1]{\href{mailto:#1}{\normalfont\texttt{#1}}}
\newcommand{\abs}[1]{\left|{#1}\right|}
\newcommand{\norm}[1]{\lVert{#1}\rVert}
\newcommand{\N}{{\mathbb{N}}}
\newcommand{\R}{{\mathbb{R}}}
\newcommand{\dd}{{\mathrm{d}}}
\begin{document}
\title[Path-dependent Hamilton--Jacobi equations]
{Path-dependent Hamilton--Jacobi equations with
$u$-dependence and time-measurable Hamiltonians}

\thanks{The authors would like to thank the anonymous referees
for many valuable comments.}

\author[E.~Bandini]{Elena Bandini\textsuperscript{\MakeLowercase{a},1}}
\thanks{\noindent \textsuperscript{a} University of Bologna, Department of Mathematics, Piazza di Porta San Donato 5, 40126 Bologna (Italy).}
\author[C.~Keller ]{Christian Keller\textsuperscript{\MakeLowercase{b},2}}

\thanks{\noindent \textsuperscript{b} University of Central  Florida, Department of Mathematics,  4393 Andromeda Loop N
Orlando, FL 32816 (USA).
\\
\noindent \textsuperscript{1} E-mail: \mail{elena.bandini7@unibo.it}.
\\
\noindent \textsuperscript{2} E-mail: \mail{christian.keller@ucf.edu}.
}

\date{December 25, 2024}

\subjclass[2010]{}
\keywords{}

\begin{abstract}
We establish  existence and uniqueness of minimax solutions  for a fairly
general class of path-dependent Hamilton--Jacobi equations.
In  particular, the relevant Hamiltonians can contain the solution and they
only need  to be measurable with respect to time. 
We apply our results
to optimal control problems  of (delay) functional differential equations with 
cost functionals that  have discount factors
and with  time-measurable data. 
Our main results are  also crucial for our companion paper  Bandini and Keller 
 [arXiv preprint arXiv:2408.02147 (2024)],
where  non-local path-dependent Hamilton--Jacobi--Bellman equations associated to the
 stochastic optimal control
of non-Markovian piecewise deterministic processes are studied.
\end{abstract}
\maketitle

\noindent \textbf{Keywords:}  Path-dependent Hamilton--Jacobi equations; time-measurable Hamiltonians;  minimax solutions; comparison principle; optimal control.
\smallskip

\noindent \textbf{AMS 2020:}   35F21,  49L25, 34K35

\smallskip

\pagestyle{plain}

\section{Introduction}
We study path-dependent Hamilton--Jacobi equations of the form
\begin{align}\label{E:1st}
\partial_t u(t,x)+H(t,x,u(t,x),\partial_x u(t,x))=0,\quad (t,x)\in [0,T)\times D([0,T],\R^d),
\end{align}
with  time-measurable Hamiltonian, i.e., the mappings $t\mapsto H(t,x,y,z)$ only need  to be Borel-measurable.
The \emph{path space}  $D([0,T],\R^d)$ in \eqref{E:1st} is the set of all right-continuous
functions from $[0,T]$ to $\R^d$ that have left limits.
Important special cases of \eqref{E:1st} are Isaacs equations associated to differential games with history-dependent
data
and Hamilton--Jacobi--Bellman (HJB) 
equations associated to optimal controls problems involving (delay) functional differential
equations. {\color{black}
Consider, e.g.,
a value function $v$ of the form 
\begin{align*}
v(t_0,x_0)=\inf_{a(\cdot)\in\mathcal{A}} h(\{x^{t_0,x_0,a(\cdot)}(t)\}_{0\le t\le T}),\quad (t_0,x_0)\in [0,T]\times D([0,T],\R^d),
\end{align*}
where $x=x^{t_0,x_0,a(\cdot)}:[0,T]\to\R^d$ solves an equation of the form
\begin{align*}
x^\prime (t)&= f(t,x(t-1),a(t))\text{ on $(t_0,T)$, where $x(s):=x(0)$ in case $s<0$,}\\
x\vert_{[0,t_0]}&=x_0\vert_{[0,t_0]}.
\end{align*}
Then $v$ should formally solve 
the HJB equation
\begin{align*}
\partial_t v(t,x)+\inf_{a\in A} 
\left[ f(t,x({\max\{t-1,0\}}),a)\cdot \partial_x v(t,x)\right]&=0,\,\, (t,x)\in [0,T)\times D([0,T],\R^d),\\
v(T,x)&=h(x),\quad x\in D([0,T],\R^d).
\end{align*}}

{\color{black} Note that our path-dependent equations of the form  \eqref{E:1st} differ from usual partial differential equations 
in two crucial points.\footnote{I.e., \eqref{E:1st} is not a ``standard" partial differential equation
on an infinite-dimensional space.}
First, solutions $u$ of $\eqref{E:1st}$ as well as the Hamiltonian $H$ are required to be \emph{non-anticipating}, i.e.,
\begin{align}\label{E:Intro:NonAnt}
x\vert_{[0,t]}=\tilde{x}\vert_{[0,t]}\Longrightarrow u(t,x)=u(t,\tilde{x})\text{ and } H(t,x,y,z)=H(t,\tilde{x},y,z),
\end{align}
in other words $u(t,x)$ and $H(t,x,y,z)$ depend only on the history of the path $x$ until time $t$, i.e., on $\{x(s)\}_{0\le s\le t}$.
This requirement makes equations of the form \eqref{E:1st}  as well as their 2nd order counterparts
suitable for deterministic and stochastic optimal control problems with history dependent data
(the controller knows the history and the current state but not the future).
Second, the so-called \emph{path derivatives} $\partial_t$ and $\partial_x$ in \eqref{E:1st}
(originated in \cite{Kim85})
 are not understood in the usual (Fr\'echet) sense. Instead they are
  defined in a way such that a relatively simple chain rule holds, e.g.,
$\frac{d}{dt} u(t,x)=\partial_t u(t,x)+\partial_x u(t,x)\cdot x^\prime(t)$ 
for a non-anticipating ``smooth" function $u$ on $[0,T]\times
D([0,T],\R^d)$ and a differentiable path $x(\cdot)$ on $[0,T]$.
 Nevertheless, these path derivatives are compatible with the usual derivatives in the non-path-dependent case
(see below in section~\ref{S:PathDerivatives} for details concerning  path derivatives).}

{\color{black} 
Just as for ``standard" Hamilton--Jacobi equations on finite-dimensional spaces,
we do not expect to have smoothness, so we need non-smooth solutions.
The two most common notions of non-smooth solutions for path-dependent Hamilton--Jacobi equations
are viscosity solutions and minimax solutions.\footnote{ Minimax solutions originated in the 
theory of differential games (see, e.g., \cite{KrasovksiiSubbotin}),
where minmax operations are common, which motivates the name minimax solutions.
See also Remark~\ref{R:MinimaxSolutionMotivation}  below for more motivation.}
In fact, for a large class of path-dependent partial differential equations 
but with continuous (!) Hamiltonians, these two notions turn out to be equivalent (see \cite{GP23JFA}).\footnote{
Note that we do not pursue here a generalization of those equivalence results in our setting.
} 

Note that a large body of literature for non-smooth solutions of  {\color{black} path-dependent
partial differential equations} 
with continuous
Hamiltonians has been created (see \cite{GLP21AMO,GL24survey} and the references therein). 
{\color{black} Among the early works, we want to mention 
\cite{Lukoyanov2000,Lukoyanov03,Lukoyanov07,AubinHaddad02PPDE}
in the 1st order case
and \cite{EKTZ11,ETZ_I,ETZ_II}  in the 2nd order case.}
{\color{black} Also  note that t}ypically,  $C([0,T],\R^d)$ is used as path space. 
 But
 this 
 difference is not material.}

Here, we work with minimax solutions. This helps a lot in our setting,
 where $H$  in \eqref{E:1st} is time-measurable, as no change of {\color{black} definition} 
 is needed compared to the case of $H$
 being continuous. Furthermore, we  also extend the theory of minimax solutions 
for path-dependent equations in \cite{Lukoyanov03} to the case of $u$-dependent Hamiltonians and thereby provide
a counterpart to the theory of minimax solutions
for  
non-path-dependent
Hamilton--Jacobi equations with $u$-dependent Hamiltonians  in \cite{Subbotin}
(see also the references therein for earlier works but with more restrictive assumptions).

Viscosity solutions still play a useful role.  They are utilized for the proof of our comparison principle.\footnote{
We use the same methodology  for establishing existence and uniqueness for 
minimax solutions with the help of viscosity solution techniques
 as in \cite{BK18JFA}: The ``five-step-scheme" described on p.~2103 therein to be more precise.
}

Finally note that we choose $D([0,T],\R^d)$ as path space and  we work with relatively weak assumptions for $H$ 
 (in particular, time-measurability) because doing so is crucial for 
 establishing well-posedness of non-local path-dependent partial differential equations in our companion paper \cite{BK2}
(see section~1.2 therein for details).

\subsection{Further related literature}
Results for viscosity solutions for  
non-path-dependent equations with only time-measurable Hamiltonians were first published in \cite{Ishii85}
(this work had also  some influence for the proof of our comparison principle in section~\ref{S:Comparison}).
Further related early works are \cite{LionsPerthame87} and \cite{BarronJensen87}.

Minimax solutions for   
non-path-dependent
equations with time-measurable Hamiltonians were studied in 
\cite{SonEtAl95,SonEtAl97,minimaxBookTimeMeas, VanEtAl95}. 
Note that these works unlike ours require positive homogeneity of
the Hamiltonian with respect to the gradient.

In \cite{Qiu22SPA_RHJB}, a quite similar optimal control problem (compared to ours in section~\ref{S:OptimalControl})
is studied.
Besides being  time-measurable, the coefficients in \cite{Qiu22SPA_RHJB}  can even be random. Correspondingly, 
the controller minimizes an expected cost functional. However, in order to prove uniqueness for
the associated HJB equation, time-continuity is required in contrast to our work.
Moreover, we 
cover more general Hamilton--Jacobi equations besides HJB equations.

{\color{black}  Viscosity solutions for path-dependent Hamilton--Jacobi equations related
to  systems with ``distributed time delays"  only were investigated in \cite{Lukoyanov07} and
related to systems with  ``distributed time delays" as well as finitely many ``discrete time delays"
 in \cite{Lukoyanov10b}. 
A breakthrough in  \cite{zhou2020viscosity},  where for the first time  the smooth functional $\Psi$  
(see  \eqref{E:Comparison:Psi} below)
was studied,
led to much more general results, in particular, 
  existence and uniqueness for viscosity solutions of path-dependent HJB equations with Hamiltonians that only need to be 
  Lipschitz continuous in the path variable $x=x(\cdot)$ with respect to the supremum norm $x\mapsto \sup_{t\in [0,T]}
\abs{x(t)}$.  The mentioned functional $\Psi$ was crucial in \cite{zhou2020viscosity} to prove the comparison principle
for viscosity solutions and it is also crucial for our work (see Theorem~\ref{T:Comparison:H} and its proof below).}

\subsection{{\color{black} Some difficulties and their resolutions}}
To prove a comparison principle for viscosity solutions,  it is typical to 
consider the 
doubled equation 
\begin{align}\label{E:Naive}
\partial_t w+H(t,x,u,\partial_x w)-
H(t,\tilde{x},v,-\partial_{\tilde{x}} w)=0
\end{align}
 with $w(t,x,\tilde{x})=u(t,x)-v(t,\tilde{x})$, where $u$ is a viscosity sub- and $v$ is a viscosity supersolution
(cf.~\cite[section~4]{BK18JFA}\footnote{Note that there $u$ and $v$ are minimax semisolutions.}). However, the lack of continuity  of $H$ in $t$ 
causes trouble. Possible ways to deal with this issue are to replace 
$H(t,\cdot)$ in the definition of  test functions for viscosity subsolutions by
expressions  of the form $\varlimsup_{\delta\downarrow 0} \delta^{-1} \int_t^{t+\delta}
H(s,\cdot)\,ds$ (cf.~the treatment of the time-measurable operator $A(t,\cdot)$ in 
\cite[Definition~4.1]{BK18JFA}) or
of the form $\mathrm{ess}\varlimsup_{s\downarrow t} H(s,\cdot)$ (cf.~
\cite[Definition~4.2]{Qiu18SICON_SHJB}).\ In this work, 
we proceed with a different approach. Instead of the 
``naive" doubled equation~\eqref{E:Naive},
we consider another doubled equation\footnote{
See \eqref{E:Doubled:H:updated2} in Definition~\ref{D:wuv:viscosity:updated2}.
} with a relatively abstract Hamiltonian, which is at least semi-continuous
in time. Thereby, the difficulty due to the time-measurability of $H$ is circumvented.
 We do not know if 
  a proof of the comparison principle is possible if one proceeds along the previously mentioned other ``possible ways." 

{\color{black}
We use Perron's method to obtain our existence result. It should be noted that
continuity of the data is usually used in applying Perron's method 
for viscosity solutions (see \cite{Ishii87Perron})
and  we do not know how to overcome the lack of continuity
of the Hamiltonian with respect to time, especially
given that we work with   partial differential equations on an infinite-dimensional space.
Using minimax solutions circumvents this issue.}

\subsection{Organization of the rest of the paper}
Section~\ref{S:Setting} introduces the setting (path space with topology) and some notation.
In section~\ref{S:PathDerivatives}, we give meaning to the derivatives $\partial_t u$ and $\partial_x u$ in 
\eqref{E:1st}. Section~\ref{S:Minimax} contains the definition of minimax solution to terminal-value problems
involving \eqref{E:1st} as well as standing assumptions for our data such as the Hamiltonian $H$.
In section~\ref{S:Comparison}, a comparison principle for \eqref{E:1st} is established.
In section~\ref{S:Existence}, we establish a general existence result via Perron's method.
In section~\ref{S:OptimalControl}, we consider an optimal control problem for (delay) functional
differential equations with time-measurable data
and we show that the value function is the unique minimax solution to the associated
{\color{black} path-dependent} HJB equation.
{\color{black} In section~\ref{S:NonPPDE}, we consider non-path-dependent counterparts of our previous results.
In particular, we establish that the value function of an optimal control problem
with time-measurable data is the unique minimax solution of a standard non-path-dependent HJB equation.
Such a result is also new to the best of our knowledge.
Finally, in the appendix, we establish regularity of the value function for our 
optimal control problem in section~\ref{S:OptimalControl}.}

\section{Setting and notation}\label{S:Setting}
Let  $\Omega:= D([0,T],\R^d)$. 
We equip  $\Omega$ with the supremum norm $\norm{\cdot}_\infty$
and $[0,T]\times\Omega$ with a pseudo-metric $\mathbf{d}_\infty$ defined by
\begin{align*}
\mathbf{d}_\infty((t,x),(s,\tilde{x})):=\abs{t-s}+\sup_{0\le r\le T} \abs{x(r\wedge t)-\tilde{x}(r\wedge s)},
\end{align*}
{\color{black} where $r\wedge t:=\min\{r,t\}$ for any $r$, $t\in\R$.}

{\color{black}
\begin{remark}
Semicontinuous functions on $[0,T]\times\Omega$ equipped with $\mathbf{d}_\infty$  are 
automatically non-anticipating in the sense of \eqref{E:Intro:NonAnt}.
\end{remark}
}

Given a set $S\subset [0,T]$, we write $\bfone_S$ for the corresponding indicator function, i.e.,
$\bfone_S(t)=1$ if $t\in S$ and $\bfone_S(t)=0$ if $t\in [0,T]\setminus S$.

Given topological spaces $E$ and $F$, we denote by $C(E,F)$ the set of all continuous functions
from $E$ to $F$. In case $F=\R$, we just write $C(E)$. Similarly, we denote by $\mathrm{USC}(E)$
the set of all upper semicontinuous (u.s.c.) functions from $E$ to $\R$ and by
$\mathrm{LSC}(E)$ the set of all lower semicontinuous (l.s.c.) functions from $E$ to $\R$.


We write $a\cdot b=(a,b)$ for the inner product of two vectors $a$ and $b$ in $\R^d$.

Given $L\ge 0$, $(s_0,x_0)\in [0,T)\times\Omega$, define 
\begin{equation}\label{E:XLs0x0}
\begin{split}
&\mathcal{X}^L(s_0,x_0):=\Bigl\{x\in\Omega:\, \text{$x\vert_{[s_0,T]}$ is absolutely continuous
with}\\
&\qquad\qquad \abs{x^\prime(t)}\le L(1+\sup_{s\le t} \abs{x(s)})\text{ a.e.~on $(s_0,T)$
and $x=x_0$ on $[0,s_0]$}\Bigr\}.
\end{split}
\end{equation}
Those sets of  {\color{black} Lipschitz} 
paths are very important. In particular, thanks to their useful
properties listed in the following remarks, they  are helpful  insofar as they circumvent difficulties coming from the lack of local
compactness of $\Omega$.

\begin{remark}\label{R:X}
The sets $\mathcal{X}^L(s_0,x_0)$ are compact in $(\Omega,\norm{\cdot}_\infty)$
(see, e.g., Proposition~4.1 in \cite{Lukoyanov03} or Proposition~2.10 in \cite{BK18JFA}).
\end{remark}

\begin{remark}\label{R:Xn}
Let $L\ge 0$ and $(t_n,x_n)\to (t_0,x_0)$ in $[0,T]\times\Omega$ as $n\to\infty$.
Then every sequence $(\tilde{x}_n)_n$ with $\tilde{x}_n\in\mathcal{X}^L(t_n,x_n)$, $n\in\N$,
has a subsequence that converges to some $\tilde{x}_0\in\mathcal{X}^L(t_0,x_0)$
(cf.~Proposition~4.2 in \cite{Lukoyanov03} and Proposition~2.12 in \cite{BK18JFA}).
For a detailed proof, follow the approach of 
 Lemma~1 on page~87 in  \cite{FilippovBook}.
\end{remark}

\section{Path derivatives}\label{S:PathDerivatives}
Our path derivatives are  due to  A.~V.~Kim
(see \cite{KimBook} for a detailed exposition).

\begin{definition}\label{D:C111}
We write $\varphi\in C^{1,1,1}([0,T]\times\Omega\times\Omega)$ if
$\varphi\in C([0,T]\times\Omega\times\Omega)$ and there are functions
$\partial_t \varphi\in C([0,T]\times\Omega\times\Omega)$ and 
$\partial_x \varphi$, $\partial_{\tilde{x}}\varphi\in C([0,T]\times\Omega\times\Omega,\R^d)$,
called \emph{path derivatives} of $\varphi$, such
that, for every $(t_0,x_0,\tilde{x}_0)\in [0,T)\times\Omega\times\Omega$,
 every pair $(x,\tilde{x})\in
 {\color{black}\Omega\times\Omega}$
 with $(x,\tilde{x})\vert_{[0,t_0]}=(x_0,\tilde{x}_0)\vert_{[0,t_0]}$
 and  $(x,\tilde{x})\vert_{[t_0,T]}$ being Lipschitz continuous, and every $t\in (t_0,T]$, we have
 \begin{equation}\label{E:ChainRule}
 \begin{split}
& \varphi(t,x,\tilde{x})-\varphi(t_0,x_0,\tilde{x}_0)\\ &\qquad=\int_{t_0}^t 
\left[
\partial_t\varphi(s,x,\tilde{x}) 
+ x^\prime(s)\cdot\partial_x\varphi(s,x,\tilde{x})+\tilde{x}^\prime(s)\cdot\partial_{\tilde{x}} \varphi(s,x,\tilde{x})\right]\,\dd s.
 \end{split}
 \end{equation}
\end{definition}

\begin{remark}\label{R:pathDerivUnique}
The path derivatives of any function in  $C^{1,1,1}([0,T]\times\Omega\times\Omega)$ are uniquely determined
(see, e.g., Remark~2.17 in \cite{BK18JFA}).
\end{remark}

{\color{black} Independently, B.~Dupire introduced in \cite{dupirefunctional}
explicitly defined   path derivatives on $D([0,T],\R^d)$
and established a functional It\^o calculus.\footnote{I.e., a non-Markovian extension of the usual It\^o calculus.}
Dupire's derivatives 
$\partial^{\mathrm{Dupire}}_t$, $\partial^{\mathrm{Dupire}}_x
=(\partial^{\mathrm{Dupire}}_{x^1},\ldots, \partial^{\mathrm{Dupire}}_{x^d})$, and
$\partial^{\mathrm{Dupire}}_{\tilde{x}}
=(\partial^{\mathrm{Dupire}}_{\tilde{x}^1},\ldots, \partial^{\mathrm{Dupire}}_{\tilde{x}^d})$ for a
function $\varphi:[0,T]\times\Omega\times\Omega\to\R$ are
defined as follows (provided the  limits below exist and are finite):
\begin{equation}\label{E:DupireDerivatives}
\small
\begin{split}
\partial^{\mathrm{Dupire}}_t\varphi(t,x,\tilde{x})&:=
\lim_{\delta\downarrow 0}  \frac{\varphi(t+\delta,x(\cdot\wedge \delta),\tilde{x}(\cdot\wedge \delta))-\varphi(t,x,\tilde{x})}{\delta}
\text{ (provided $t<T$),}\\
\partial^{\mathrm{Dupire}}_t\varphi(T,x,\tilde{x})&:=\lim_{t\uparrow T} \partial^{\mathrm{Dupire}}_t \varphi(t,x,\tilde{x}),\\
\partial^{\mathrm{Dupire}}_{x^i}\varphi(t,x,\tilde{x})&:=
\lim_{r\to 0}  \frac{\varphi(t,(x^1,\ldots,x^{i-1},x^i+
r\bfone_{[t,T]},x^{i+1},\ldots,x^d),\tilde{x})-\varphi(t,x,\tilde{x})}{\delta},\\
\partial^{\mathrm{Dupire}}_{\tilde{x}^i}\varphi(t,x,\tilde{x})&:=
\lim_{r\to 0}  \frac{\varphi(t,x,(\tilde{x}^1,\ldots,\tilde{x}^{i-1},\tilde{x}^i+
r\bfone_{[t,T]},\tilde{x}^{i+1},\ldots,\tilde{x}^d))-\varphi(t,x,\tilde{x})}{\delta}
\end{split}
\end{equation}
 for each $i\in\{1,\ldots,d\}$. These derivatives
  coincide with the path derivatives in Definition~\ref{D:C111} in the following sense:
Suppose that 
$\varphi\in C([0,T]\times\Omega\times\Omega)$ is  continuously 
differentiable in the sense of Dupire,
i.e, $(\partial^{\mathrm{Dupire}}_t\varphi,\partial^{\mathrm{Dupire}}_x\varphi, 
 \partial^{\mathrm{Dupire}}_{\tilde{x}}\varphi)$ defined by \eqref{E:DupireDerivatives}
 exists and belong to $C([0,T]\times\Omega\times\Omega,\R\times\R^d\times\R^d)$. Then
 \cite[Theorem~2.4]{zhou2020viscosity} yields the chain rule \eqref{E:ChainRule}
 with $(\partial_t\varphi,\partial_x\varphi,\partial_{\tilde{x}}\varphi)$
 replaced by  $(\partial^{\mathrm{Dupire}}_t\varphi,\partial^{\mathrm{Dupire}}_x\varphi, 
 \partial^{\mathrm{Dupire}}_{\tilde{x}}\varphi)$. It follows immediately that
  $\varphi\in C^{1,1,1}([0,T]\times\Omega\times\Omega)$ and, together with Remark~\ref{R:pathDerivUnique},
  we can deduce that
 \begin{align*}
 \partial_t \varphi=\partial^{\mathrm{Dupire}}_t\varphi,\quad\partial_x \varphi=\partial^{\mathrm{Dupire}}_x\varphi,\quad
 \partial_{\tilde{x}}\varphi=\partial^{\mathrm{Dupire}}_{\tilde{x}}\varphi.
 \end{align*}
  }

\section{Minimax solutions}\label{S:Minimax}
Fix functions $H:[0,T]\times\Omega\times\R\times\R^d\to\R$ and $h:\Omega\to\R$. We consider the terminal-value problem
\begin{equation}\label{E:PPDE:H}
\begin{split}
-\partial_t u-H(t,x,u,\partial_x u)&=0,\quad (t,x)\in [0,T)\times\Omega,\\
 u(T,x)&=h(x),\quad x\in\Omega.
\end{split}
\end{equation}

The following two assumptions are always in force.

\begin{assumption}\label{A:h}
The function $h$ is continuous.
\end{assumption}

\begin{assumption}\label{A:H}
Suppose that $H$ satisfies the following conditions.

(i) For a.e.~$t\in (0,T)$, the function $(x,y,z)\mapsto H(t,x,y,z)$, $\Omega\times\R\times\R^d\to\R$,
is continuous.

(ii) For every $(x,y,z)\in\Omega\times\R\times\R^d$, the function $t\mapsto H(t,x,y,z)$, $[0,T]\to\R$,
is Borel measurable.

(iii) There is a constant $L_H\ge 0$ such that, for a.e.~$t\in (0,T)$, every $x\in\Omega$, $y\in\R$, $z$, $\tilde{z}\in\R^d$,
\begin{align*}
\abs{H(t,x,y,z)-H(t,x,y,\tilde{z})}\le L_H(1+\sup_{s\le t}\abs{x(s)})\abs{z-\tilde{z}}.
\end{align*}


(iv) For every $L\ge 0$, there exists a constant 
 $M_L\ge 0$ such that,
for every $(t_0,x_0)\in [0,T)\times\Omega$,   $x$, $\tilde{x}\in\mathcal{X}^L(t_0,x_0)$, $y\in\R$, $z\in\R^d$, and a.e.~$t\in (t_0,T)$,
\begin{align*}
\abs{H(t,x,y,z)-H(t,\tilde{x},y,z)}&\le M_L(1{\color{black}+\abs{y}}+\abs{z})\,\sup_{s\le t} \abs{x(s)-\tilde{x}(s)}.
\end{align*}

(v) For a.e.~$t\in (0,T)$ and every $(x,z)\in\Omega\times\R^d$, the function $y\mapsto H(t,x,y,z)$, $\R\to\R$, is non-increasing.

(vi) There is a constant $C_H\ge 0$ such that, for a.e.~$t\in (0,T)$  and all $(x,y)\in\Omega\times\R$,
\begin{align*}
\abs{H(t,x,y,0)}\le C_H(1+\sup_{s\le t}\abs{x(s)}+\abs{y}).
\end{align*}
\end{assumption}

Next, we introduce sets of paths needed in our definition of minimax solution.\footnote{
Note that without the $u$-dependence of $H$, the situation would be much easier. 
Only the sets  $\mathcal{X}^L(s_0,x_0)$ (and not $\mathcal{Y}^L
(s_0,x_0,y_0,z)$)
would then be needed (see, e.g., \cite[section~1.3]{BK18JFA}). 
}

Given $L\ge 0$,  $s_0\in [0,T)$, $x_0\in\Omega$, $y_0\in\R$, and $z\in\R^d$, define
\begin{align*}
\mathcal{Y}^L
(s_0,x_0,y_0,z)&:=\Bigl\{(x,y)\in\mathcal{X}^L(s_0,x_0)\times C([s_0,T]):\\
&\qquad\qquad y(t)=y_0+\int_{s_0}^t \left[{\color{black} x^\prime(s)\cdot z}-H(s,x,y(s),z)\right]\,\dd s\text{ on $[s_0,T]$}\Bigr\}.
\end{align*}

\begin{remark}\label{R:Y}
Thanks to Assumption~\ref{A:H}~(i) and (vi),
the sets  $\mathcal{Y}^L(s_0,x_0,y_0,z)$ are non-empty
(cf.~Proposition~2.13~(i) in \cite{BK18JFA}) and compact in $(\Omega\times C([s_0,T]),\norm{\cdot}_\infty)$
 (cf.~~Remark~\ref{R:X}). 
Moreover, using additionally Assumption~\ref{A:H}~(iii),
one can show that also
the intersections
$\mathcal{Y}^{L_H}(s_0,x_0,y_0,z)\cap
\mathcal{Y}^{L_H}(s_0,x_0,y_0,\tilde{z})$, $z\neq\tilde{z}$, are non-empty
(see, e.g., \cite[pages 73-74]{Subbotin} or  \cite[pages 2124-2125]{BK18JFA}).
\end{remark}

{\color{black} In the next remark, we use $D([0,T])$, the set of all right-continuous
functions from $[0,T]$ to $\R$ that have left limits.}

\begin{remark}\label{R:Yn}
Let $L\ge 0$, $z\in\R^d$, and $(t_n,x_n,y_n)\to (t_0,x_0,y_0)$ in $[0,T]\times\Omega\times\R$ as $n\to\infty$.
Then every sequence $(\tilde{x}_n,\tilde{y}_n)_n$ in $\Omega\times D([0,T])$
with $(\tilde{x}_n,\tilde{y}_n\vert_{[t_n,T]})\in\mathcal{Y}^L(t_n,x_n,y_n,z)$, $n\in\N$,
has a subsequence that converges to some $(\tilde{x}_0,\tilde{y}_0)$ in $(\Omega\times D([0,T]),\norm{\cdot}_\infty)$
with $(\tilde{x}_0,\tilde{y}_0\vert_{[t_0,T]})\in \mathcal{Y}^L(t_0,x_0,y_0,z)$.
This follows from Remark~\ref{R:Xn} and an appropriate
adaption of  the proof of Lemma~5 on page~8 in \cite{FilippovBook} to our setting.
\end{remark}

\begin{definition}\label{D:MinimaxSolution}
Let $L\ge 0$ and $u:[0,T]\times\Omega\to\R$.

(i) $u$ is a \emph{minimax $L$-supersolution} of \eqref{E:PPDE:H} if $u\in\mathrm{LSC}([0,T]\times\Omega)$,
if $u(T,\cdot)\ge h$ on $\Omega$, and if, for every $(s_0,x_0,z)\in [0,T)\times\Omega\times\R^d$, and
every $y_0\ge u(s_0,x_0)$, there exists an $(x,y)\in\mathcal{Y}^L(s_0,x_0,y_0,z)$ such that
$y(t)\ge u(t,x)$ for each $t\in [s_0,T]$.

(ii)  $u$ is a \emph{minimax $L$-subsolution} of \eqref{E:PPDE:H} if $u\in\mathrm{USC}([0,T]\times\Omega)$,
if $u(T,\cdot)\le h$ on $\Omega$, and if, for every $(s_0,x_0,z)\in [0,T)\times\Omega\times\R^d$, and
every $y_0\le u(s_0,x_0)$, there exists an $(x,y)\in\mathcal{Y}^L(s_0,x_0,y_0,z)$ such that
$y(t)\le u(t,x)$ for each  $t\in [s_0,T]$.

(iii) $u$ is a \emph{minimax $L$-solution} of \eqref{E:PPDE:H} if it is both a \emph{minimax $L$-supersolution}
and a \emph{minimax $L$-subsolution} of  \eqref{E:PPDE:H}.
\end{definition}

\begin{remark}\label{R:MinimaxSolutionMotivation}
{\color{black} Notice that classical solutions are minimax solutions. We sketch some of the arguments}
(adapted from  \cite[section~2.4 ]{Subbotin}).
Assume that $H$ is continuous and that $u$ is a \emph{classical solution} of \eqref{E:PPDE:H}, i.e.,
$u\in C^{1,1} ([0,T]\times\Omega)$ (this space is an obvious modification\footnote{
Just assume that $(t,x,\tilde{x})\mapsto u(t,x)\in C^{1,1,1}([0,T]\times\Omega)$.
} of Definition~\ref{D:C111}) and, for every $(t,x)\in [0,T)\times\Omega$,
\begin{align*}
-\partial_t u(t,x)-H(t,x,u(t,x),\partial_x u(t,x))= 0.
\end{align*}
We will show that $u$ is a minimax $L_H$-supersolution of \eqref{E:PPDE:H} 
with the slight modification that instead of $y_0\ge u(s_0,x_0)$
in Definition~\ref{D:MinimaxSolution}~(i), we only require to consider the case $y_0=u(s_0,x_0)$.
To this end, fix $(s_0,x_0,z)\in [0,T)\times\Omega\times\R^d$ and let $y_0= u(s_0,x_0)$.
Let $x$ be a solution of
\begin{align*}
x^\prime(t)=\begin{cases}
\frac{H(t,x,u(t,x),\partial_x u(t,x))-H(t,x,u(t,x),z)}{\abs{\partial_x u(t,x)-z}^2}
\cdot (\partial_x u(t,x)-z)&\text{if $\partial_x u(t,x)\neq z$,}\\
0 &\text{if $\partial_x u(t,x)=z$}
\end{cases} 
\end{align*}
a.e.~on $(s_0,T)$
with initial condition $x(t)=x_0(t)$ for every $t\in [0,s_0]$.
Note that $x\in\mathcal{X}^{(L_H)}(s_0,x_0)$ according to Assumption~\ref{A:H}.
Let $y$ be a solution of 
\begin{align*}
y^\prime(t)={\color{black} x^\prime(t)\cdot z}-H(t,x,u(t,x),z)\quad\text{a.e.~on $(s_0,T)$}
\end{align*}
with initial condition $y(s_0)=y_0$.
Since 
\begin{align*}
\frac{d}{dt} u(t,x)&=\frac{d}{dt} \int_{s_0}^t \partial_t u(s,x)+{\color{black} x^\prime(s)\cdot\partial_x u(s,x)}\,ds\\
&=-H(t,x,u(t,x),\partial_x u(t,x)) +
{\color{black} x^\prime(t)\cdot[\partial_x u(t,x)-z]}
 + {\color{black} x^\prime(t)\cdot z}\\
&=-H(t,x,u(t,x),z)+{\color{black} x^\prime(t)\cdot z}\\
&=y^\prime(t),
\end{align*}
we have
 $(x,y)\in\mathcal{Y}^{(L_H)}
 (s_0,x_0,y_0,z)$ and $y(t)=u(t,x)$ for all $t\in [s_0,T]$.
 Hence, $u$ is a minimax $L_H$-supersolution of \eqref{E:PPDE:H} 
 (in the slightly modified sense specified at the beginning of this remark).
\end{remark}

The next lemma provides an equivalent criterion for a function to be a minimax supersolution.
A corresponding statement holds for minimax subsolutions.

\begin{lemma}\label{R:Minimax:Quantifiers}
A function $u:[0,T]\times\Omega\to\R$ is a minimax $L$-supersolution of \eqref{E:PPDE:H} if and only if
$u\in\mathrm{LSC}([0,T]\times\Omega)$, $u(T,\cdot)\ge h$, and, for
each  $(s_0,x_0,z)\in [0,T)\times\Omega\times\R^d$, 
 $y_0\ge u(s_0,x_0)$, and $t\in (s_0,T]$, there  is an $(x,y)\in\mathcal{Y}^L(s_0,x_0,y_0,z)$ such that
$y(t)\ge u(t,x)$.
\end{lemma}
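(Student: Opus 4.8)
The plan is to show the two implications. The forward direction is immediate: if $u$ is a minimax $L$-supersolution in the sense of Definition~\ref{D:MinimaxSolution}~(i), then given $(s_0,x_0,z)$, $y_0\ge u(s_0,x_0)$, and any $t\in(s_0,T]$, the definition already hands us a single $(x,y)\in\mathcal{Y}^L(s_0,x_0,y_0,z)$ with $y(r)\ge u(r,x)$ for \emph{all} $r\in[s_0,T]$, so in particular $y(t)\ge u(t,x)$; the semicontinuity and terminal inequality are verbatim the same. So the content is the converse: from the weaker pointwise-in-$t$ property (where the path $(x,y)$ is allowed to depend on $t$) we must produce one path that works simultaneously for all $t$.

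For the converse, I would fix $(s_0,x_0,z)$ and $y_0\ge u(s_0,x_0)$ and build the desired path by a compactness/diagonal argument. First I would take a countable dense set $\{t_k\}_{k\ge 1}$ in $(s_0,T]$ (e.g. the rationals), and for each $n$ apply the hypothesis at the single time $t_n$ to get $(x_n,y_n)\in\mathcal{Y}^L(s_0,x_0,y_0,z)$ with $y_n(t_n)\ge u(t_n,x_n)$. A cleaner route, which avoids having to patch a sequence of distinct times, is a finite-grid-then-limit scheme: for fixed $n$, consider the grid $s_0=r_0^n<r_1^n<\dots<r_{m_n}^n=T$ with mesh $\to 0$, and construct a path $(x^n,y^n)$ piecewise by iterating the hypothesis on successive subintervals — on $[r_0^n,r_1^n]$ use the hypothesis at time $r_1^n$ to get a path with $y(r_1^n)\ge u(r_1^n,\cdot)$, then restart from $(r_1^n,x^n(r_1^n),y^n(r_1^n))$ using that $y^n(r_1^n)\ge u(r_1^n,x^n)$ (which is exactly the admissible range $y_0'\ge u(s_0',x_0')$ needed to reapply the hypothesis), and so on. By construction $(x^n,y^n)\in\mathcal{Y}^L(s_0,x_0,y_0,z)$ and $y^n(r_j^n)\ge u(r_j^n,x^n)$ for every grid point.

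Then I would pass to the limit: by Remark~\ref{R:Y} (compactness of $\mathcal{Y}^L(s_0,x_0,y_0,z)$ in the sup norm) there is a subsequence along which $(x^n,y^n)\to(x,y)\in\mathcal{Y}^L(s_0,x_0,y_0,z)$ uniformly. Fix $t\in(s_0,T]$ and pick grid points $r_{j_n}^n\to t$; then $y^n(r_{j_n}^n)\to y(t)$ by uniform convergence plus continuity of $y$, and $x^n\to x$ in $\Omega$ forces $x^n(r_{j_n}^n\wedge\cdot)\to x(t\wedge\cdot)$ in the pseudo-metric $\mathbf{d}_\infty$ (using right-continuity of $x$ and the uniform equicontinuity built into $\mathcal{X}^L$), so lower semicontinuity of $u$ gives $\varliminf_n u(r_{j_n}^n,x^n)\ge u(t,x)$. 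Combining, $y(t)=\lim_n y^n(r_{j_n}^n)\ge\varliminf_n u(r_{j_n}^n,x^n)\ge u(t,x)$, which is the assertion for this $t$; since $t$ was arbitrary and $y(s_0)=y_0\ge u(s_0,x_0)$ handles the endpoint, $u$ is a minimax $L$-supersolution.

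The main obstacle is the limiting step, specifically making sure the grid-point values $u(r_{j_n}^n,x^n)$ are controlled from below in the limit: one needs the convergence $(r_{j_n}^n,x^n)\to(t,x)$ to take place in the right topology (the pseudo-metric $\mathbf{d}_\infty$ on $[0,T]\times\Omega$, not merely sup-norm convergence of the paths with $r_{j_n}^n\to t$ separately), so that $\mathrm{LSC}$ of $u$ applies. This is where the uniform Lipschitz-type bound defining $\mathcal{X}^L$ is essential: it yields $\abs{x^n(r)-x^n(t)}$ small uniformly in $n$ for $r$ near $t$, hence $x^n(\cdot\wedge r_{j_n}^n)\to x(\cdot\wedge t)$ uniformly. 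A secondary point worth stating carefully is the legitimacy of reapplying the hypothesis at each grid step, which hinges on $y^n(r_j^n)\ge u(r_j^n,x^n)$ lying in the admissible set of initial $y$-values at the restarted point — true by the previous step — together with the concatenation property of the family $\mathcal{Y}^L$ (a path admissible on $[s_0,r]$ followed by one admissible on $[r,T]$ starting from the matched data is admissible on $[s_0,T]$), which follows directly from the integral definition of $\mathcal{Y}^L$.
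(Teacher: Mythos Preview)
Your proof is correct and follows essentially the same route as the paper: iterate the pointwise hypothesis across a sequence of refining partitions of $[s_0,T]$ (the paper uses dyadic ones), concatenate to get $(x^n,y^n)\in\mathcal{Y}^L(s_0,x_0,y_0,z)$ satisfying the inequality at every grid point, extract a uniform limit via the compactness in Remark~\ref{R:Y}, and then use lower semicontinuity of $u$ together with grid points $r_{j_n}^n\to t$ to obtain $y(t)\ge u(t,x)$ for arbitrary $t$. Your discussion of the convergence in $\mathbf{d}_\infty$ and the concatenation property of $\mathcal{Y}^L$ makes explicit two points the paper leaves implicit, but the argument is the same.
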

\begin{proof}
We only prove the non-trivial direction. We proceed along the lines of the proof of Lemma~3.6 in \cite{BK18JFA}.
Fix $(s_0,x_0,z)\in [0,T)\times\Omega\times\R^d$ and $y_0\ge u(s_0,x_0)$. Given is the following:
\begin{equation}\label{E:R:Minimax:Quantifiers}
\begin{split}
&\forall (s_1,x_1)\in [s_0,T)\times\Omega:\,\forall y_1\ge u(s_1,x_1):\,\forall t\in (s_0,T]:\\
&\qquad\exists (x,y)\in\mathcal{Y}^L(s_1,x_1,y_1,z):\, y(t)\ge u(t,x).
\end{split}
\end{equation}
We have to establish the existence of a pair $(x,y)\in\mathcal{Y}^L(s_0,x_0,y_0,z)$ independent from $t$
such that $y\ge u(\cdot,x)$ on $(s_0,T)$. To this end, consider a sequence $(\pi^m)_m$ of dyadic partitions
of $[s_0,T]$ with $\pi^m: s_0=t_0^m<t_1^m<\cdots<t^m_{n(m)}=T$, $m\in\N$, 
and $\sup_i (t^m_{i+1}-t^m_i)\to 0$ as $m\to\infty$. By \eqref{E:R:Minimax:Quantifiers}, there exists,
for each $m\in\N$, a finite sequence $(x^m_i,y^m_i)_{i=1}^{n(m)}$ such that we have
$(x^m_1,y^m_1)\in\mathcal{Y}^L(s_0,x_0,y_0,z)$ with $y^m_1(t^m_1)\ge u(t^m_1),x^m_1)$
and, for each $i\in\{1,\ldots,n(m)-1\}$, we have
$(x^m_{i+1}),y^m_{i+1})\in\mathcal{Y}^L(t^m_i,x^m_i,y^m_i(t^m_i),z)$ with 
$y^m_{i+1}(t^m_{i+1})\ge u(t^m_{i+1},x^m_{i+1})$. Using those sequences,
we define pairs $(x^m,y^m)\in\mathcal{Y}^L(s_0,x_0,y_0,z)$, $m\in\N$, by
\begin{align*}
x^m(t)=\begin{cases}
x_0&\text{if $0\le t\le s_0$,}\\
x^m_i(t)&\text{if $t^m_{i-1}< t\le t^m_i$},
\end{cases}
\quad\text{and}\quad
y^m(t)=\begin{cases}
y_0&\text{if $t= s_0$,}\\
y^m_i(t)&\text{if $t^m_{i-1}< t\le t^m_i$}.
\end{cases}
\end{align*}
By compactness\footnote{
Note that Assumption~\ref{A:H}~(vi) prevents a  possible blow up of $y^m$ as $m\to\infty$.
} of $\mathcal{Y}^L(s_0,x_0,y_0,z)$ (Remark~\ref{R:Y}), we can, without loss of generality, assume that
$(x^m,y^m)_m$ converges uniformly to a pair $(x^0,y^0)\in\mathcal{Y}^L(s_0,x_0,y_0,z)$.
Let $t\in (s_0,T]$. Then there is a sequence $(s^m)_m$ in $(s_0,T]$ with
$s^m\in\pi^m$, $m\in\N$,  that converges to $t$. Thus
$y^0(t)=\lim_m y^m(s^m)\ge\liminf_m u(s^m,x^m)\ge u(t,x^0)$ thanks to the lower semi-continuity of $u$.
This concludes the proof.
\end{proof}
\section{Comparison principle}\label{S:Comparison}
First, we introduce  viscosity solutions for a suitable doubled equation
(equation~\eqref{E:Doubled:H:updated2} below instead of the ``naive" doubled equation \eqref{E:Naive}).
Next, we establish connections between minimax solutions of   \eqref{E:PPDE:H} and those viscosity solutions.
Finally, we prove a comparison principle for our doubled equation, which immediately leads to  a comparison
principle between minimax sub- and supersolutions of \eqref{E:PPDE:H}.

We start by defining spaces of test functions, which are needed for our notion of viscosity solutions:
Given $L\ge 0$, $(s_0,x_0,\tilde{x}_0)\in [0,T)\times\Omega\times\Omega$, and a function
 $w:[0,T]\times\Omega\times\Omega\to\R$, let 
\begin{equation}\label{E:Testfunction:H}
\begin{split}
&\underline{\mathcal{A}}^L w(s_0,x_0,\tilde{x}_0):=
\Big \{\varphi\in C^{1,1,1}([s_0,T]\times\Omega\times\Omega):\,\exists \, T_0\in (t_0,T]:\\
&\qquad 0=(\varphi-w)(s_0,x_0,\tilde{x}_0)
 =\inf\{ (\varphi-w)(t,x,\tilde{x}): \\
 &\qquad\qquad\qquad  (t,x,\tilde{x})\in [s_0,T_0]\in\mathcal{X}^L(s_0,x_0)\times\mathcal{X}^L(s_0,\tilde{x}_0)\Big \}. 
\end{split}
\end{equation}

The next definition is vaguely inspired by \cite{Ishii85}.

\begin{definition}\label{D:wuv:viscosity:updated2}
Fix $L\ge 0$ and a function $\Upsilon:[0,T]\times\Omega\to\R$. Let $M_L$ be the constant from Assumption~\ref{A:H}~(iv). A function $w:[0,T]\times\Omega\times\Omega\to\R$  is 
 an \emph{$L$-viscosity subsolution of} 
\begin{equation}\label{E:Doubled:H:updated2}
\begin{split}
&\max\{w,0\}\cdot\Bigl[\partial_t w+M_L\left(1{\color{black}+\abs{\Upsilon}}+\abs{\partial_x w}\right)\,\sup\nolimits_{t\le s} \abs{x(t)-\tilde{x}(t)}\\
&\qquad\qquad\qquad\qquad {\color{black} + L_H\left(1+\sup\nolimits_{t\le s} \abs{\tilde{x}(t)}\right)\,
\abs{\partial_x w+\partial_{\tilde{x}} w}}\Bigr]
= 0
\end{split}
\end{equation}
\emph{on $[0,T)\times\Omega\times\Omega$ with parameter $\Upsilon$} if $w$ is u.s.c.,
{\color{black} $(s,x,\tilde{x})\mapsto \Upsilon(s,x)-w(s,x,\tilde{x})$,
$[0,T]\times\Omega\times\Omega\to\R$, is l.s.c.,}
and, 
 for 
  every
  $(s,x,\tilde{x})\in [0,T)\times\Omega\times\Omega$ 
   and
 every test function
 $\varphi\in\underline{\mathcal{A}}^L w(s,x,\tilde{x})$,  we have 
\begin{equation}\label{E:Viscosity:H:updated2}
\begin{split}
&\partial_t\varphi(s,x,\tilde{x})
+ M_L\left(1{\color{black}+\abs{\Upsilon(s,x)}}+
\abs{\partial_x\varphi(s,x,\tilde{x})}\right)
\,\sup_{t\le s} \abs{x(t)-\tilde{x}(t)}\\ 
&\qquad\qquad
{\color{black}
+ L_H\left(1+\sup\nolimits_{t\le s} \abs{\tilde{x}(t)}\right)\,\abs{\partial_x \varphi(s,x,\tilde{x}){\color{black}+}\partial_{\tilde{x}} \varphi(s,x,\tilde{x})}
}
\ge 0
\end{split}
\end{equation}
whenever $w(s,x,\tilde{x})> 0$.
\end{definition}

\begin{lemma}\label{L:Doubling:H}
Let $L\ge 0$,  
$u$ be a minimax $L$-subsolution,  and
$v$ be a minimax $L$-supersolution of \eqref{E:PPDE:H}. Then
$(t,x,\tilde{x})\mapsto w(t,x,\tilde{x}):=u(t,x)-v(t,\tilde{x})$,
$[0,T]\times\Omega\times\Omega\to\R$, is a viscosity $L$-subsolution
of \eqref{E:Doubled:H:updated2} {\color{black} with parameter $\Upsilon=u$}.
\end{lemma}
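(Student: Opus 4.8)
The plan is to verify the viscosity subsolution property of $w$ directly from the definition, using the minimax sub/supersolution structure of $u$ and $v$ together with the ``Lipschitz-like" reachability sets $\mathcal{X}^L$ and $\mathcal{Y}^L$. Fix $(s,x,\tilde{x})\in[0,T)\times\Omega\times\Omega$ and a test function $\varphi\in\underline{\mathcal{A}}^L w(s,x,\tilde{x})$, and suppose $w(s,x,\tilde{x})=u(s,x)-v(s,\tilde{x})>0$. We must show that the inequality \eqref{E:Viscosity:H:updated2} holds at $(s,x,\tilde{x})$. The first thing to check is that $w$ is u.s.c.: this is immediate since $u\in\mathrm{USC}$ and $-v\in\mathrm{USC}$ (as $v\in\mathrm{LSC}$), and the sum of u.s.c.\ functions is u.s.c.

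\textbf{The core argument.} First I would choose a suitable ``direction" $z\in\R^d$ along which to propagate. The natural choice, given the structure of the doubled Hamiltonian in \eqref{E:Doubled:H:updated2}, is to take $z=\partial_x\varphi(s,x,\tilde{x})$ (more precisely, to set $z$ equal to a common value so that the $\partial_x\varphi$ and $\partial_{\tilde x}\varphi$ terms can both be fed through Assumption~\ref{A:H}(iii)). Then, using Lemma~\ref{R:Minimax:Quantifiers} applied to $u$ (as a minimax $L$-subsolution) starting from $(s,x)$ with $y_0=u(s,x)$ and the chosen $z$, for every $t\in(s,T]$ there is a pair $(x^t,y^t)\in\mathcal{Y}^L(s,x,u(s,x),z)$ with $y^t(t)\le u(t,x^t)$. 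Symmetrically, using $v$ as a minimax $L$-supersolution starting from $(s,\tilde x)$ with $\tilde y_0=v(s,\tilde x)$ and the same $z$, there is $(\tilde x^t,\tilde y^t)\in\mathcal{Y}^L(s,\tilde x,v(s,\tilde x),z)$ with $\tilde y^t(t)\ge v(t,\tilde x^t)$. Now compare: since $\varphi-w$ attains its minimum $0$ over $[s,T_0]\times\mathcal{X}^L(s,x)\times\mathcal{X}^L(s,\tilde x)$ at $(s,x,\tilde x)$, we have $\varphi(t,x^t,\tilde x^t)-w(t,x^t,\tilde x^t)\ge 0$ for $t$ close to $s$, i.e.\ $\varphi(t,x^t,\tilde x^t)\ge u(t,x^t)-v(t,\tilde x^t)\ge y^t(t)-\tilde y^t(t)$, while $\varphi(s,x,\tilde x)=w(s,x,\tilde x)=u(s,x)-v(s,\tilde x)=y^t(s)-\tilde y^t(s)$. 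Dividing the inequality $\varphi(t,x^t,\tilde x^t)-\varphi(s,x,\tilde x)\ge (y^t(t)-\tilde y^t(t))-(y^t(s)-\tilde y^t(s))$ by $t-s$ and using the Newton--Leibniz formula from Definition~\ref{D:C111} for $\varphi$ along the absolutely continuous paths $x^t,\tilde x^t$, and the explicit ODEs defining $y^t,\tilde y^t$ in the definition of $\mathcal{Y}^L$, the $(x^\prime,z)$ and $(\tilde x^\prime,z)$ drift contributions from $y^t,\tilde y^t$ cancel against $x^\prime\cdot\partial_x\varphi$ and $\tilde x^\prime\cdot\partial_{\tilde x}\varphi$ up to errors controlled by $\lvert\partial_x\varphi-z\rvert$ and $\lvert\partial_{\tilde x}\varphi+z\rvert$ (this is where the choice of $z$ and Assumption~\ref{A:H}(iii) enter). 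What remains after passing to the limit $t\downarrow s$ — using the compactness/stability of the sets $\mathcal{Y}^L$ from Remarks~\ref{R:Y} and \ref{R:Yn} to extract convergent subsequences of $(x^t,y^t)$ and $(\tilde x^t,\tilde y^t)$ as $t\downarrow s$, so that the integrands converge back to their values at $(s,x,\tilde x)$ by continuity (Assumption~\ref{A:H}(i), continuity of the path derivatives) — is an inequality of the form $\partial_t\varphi(s,x,\tilde x)+\big[H(s,x,u(s,x),z)-H(s,\tilde x,v(s,\tilde x),z)\big]+(\text{gradient-mismatch errors})\ge 0$. Finally I would bound the bracketed Hamiltonian difference: split as $H(s,x,u(s,x),z)-H(s,\tilde x,u(s,x),z)$ plus $H(s,\tilde x,u(s,x),z)-H(s,\tilde x,v(s,\tilde x),z)$; the first is controlled by Assumption~\ref{A:H}(iv) by $M_L(1+\lvert u(s,x)\rvert+\lvert z\rvert)\sup_{r\le s}\lvert x(r)-\tilde x(r)\rvert$, and the second is $\le 0$ because $u(s,x)>v(s,\tilde x)$ (as $w>0$ at this point) and $H$ is non-increasing in $y$ by Assumption~\ref{A:H}(v). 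Replacing $z=\partial_x\varphi(s,x,\tilde x)$ and $\lvert u(s,x)\rvert$ by $\lvert\Upsilon(s,x)\rvert$ (recall $\Upsilon=u$), and absorbing the gradient-mismatch errors into the $L_H(1+\sup_{r\le s}\lvert\tilde x(r)\rvert)\lvert\partial_x\varphi+\partial_{\tilde x}\varphi\rvert$ term via Assumption~\ref{A:H}(iii), yields exactly \eqref{E:Viscosity:H:updated2}.

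\textbf{Main obstacle.} I expect the delicate point to be the interchange of limits in $t\downarrow s$: one does not get a single pair $(x^\cdot,y^\cdot)$ working for all $t$, only a family indexed by $t$ (this is precisely the subtlety that Lemma~\ref{R:Minimax:Quantifiers} was designed to handle in the other direction), so one must argue with a sequence $t_n\downarrow s$, extract via Remarks~\ref{R:Y}/\ref{R:Yn} convergent subsequences, control the Bochner-type integrals $\frac{1}{t_n-s}\int_s^{t_n}(\cdots)\,\mathrm{d}s$ using the a.e.-in-time continuity in Assumption~\ref{A:H}(i) (a dominated-convergence / Lebesgue-point type argument, where Assumption~\ref{A:H}(vi) and the uniform Lipschitz-like bounds on $\mathcal{X}^L$ give the needed domination), and make sure the limiting objects coincide with the data at $(s,x,\tilde x)$. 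A secondary technical care is the bookkeeping of which $z$ to pick and the precise handling of the set $\mathcal{Y}^{L_H}(s,x,u(s,x),z)\cap\mathcal{Y}^{L_H}(s,x,u(s,x),\tilde z)$ when $\partial_x\varphi\ne z$ is unavoidable — but since \eqref{E:Viscosity:H:updated2} already carries an explicit $L_H(1+\sup_{r\le s}\lvert\tilde x(r)\rvert)\lvert\partial_x\varphi+\partial_{\tilde x}\varphi\rvert$ slack term, taking $z=\partial_x\varphi(s,x,\tilde x)$ throughout should make the gradient-mismatch errors land exactly in that term via Assumption~\ref{A:H}(iii), so no intersection argument is needed here. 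Throughout, I would follow closely the structure of the proof of the analogous doubling lemma in \cite{BK18JFA} (section~4), adapting it to accommodate the $u$-dependence (hence the $\mathcal{Y}^L$ sets and Assumption~\ref{A:H}(v)) and the time-measurability of $H$ (hence the a.e.-in-$t$ arguments and the semicontinuous abstract Hamiltonian in \eqref{E:Doubled:H:updated2}).
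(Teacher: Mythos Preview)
Your approach has two genuine gaps. \textbf{First}, you pass to the limit $t\downarrow s$ to obtain a pointwise inequality of the form $\partial_t\varphi(s,x,\tilde x)+[H(s,x,u(s,x),z)-H(s,\tilde x,v(s,\tilde x),z)]+\cdots\ge 0$, and only \emph{then} split and bound the bracket via Assumptions~\ref{A:H}(iv)--(v). But $H$ is merely measurable in time, and those assumptions hold only for a.e.~$t$; the specific $s$ need not be a Lebesgue point, so one cannot in general extract $H(s,\cdot)$ from the average $\frac{1}{t-s}\int_s^t H(r,\cdot)\,\dd r$. The paper reverses the order: it bounds the $H$-difference \emph{inside the integral} (see \eqref{L:Doubling:H2}) by the continuous-in-$s$ majorants $M_L(1+|y(s)|+|z|)\sup_{r\le s}|x(r)-\tilde{x}(r)|$ and $L_H(1+\sup_{r\le s}|\tilde{x}(r)|)\,|z-\tilde{z}|$, and only then divides by $\delta$ and lets $\delta\downarrow 0$; the integrand is now continuous and the limit is elementary for \emph{every} $s_0$. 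This is precisely why \eqref{E:Doubled:H:updated2} is written with that abstract, time-semicontinuous Hamiltonian rather than the ``naive'' one in \eqref{E:Naive}. \textbf{Second}, you propagate $v$ along the \emph{same} direction $z=\partial_x\varphi(s,x,\tilde{x})$ as $u$. The residual $\tilde{x}'\cdot(\partial_{\tilde{x}}\varphi+z)$ is then bounded by $L(1+\sup_{r\le s}|\tilde{x}(r)|)\,|\partial_x\varphi+\partial_{\tilde{x}}\varphi|$, with coefficient $L$ coming from the constraint $|\tilde{x}'|\le L(1+\sup|\tilde{x}|)$ defining $\mathcal{X}^L$; Assumption~\ref{A:H}(iii), which concerns $H$'s Lipschitz constant in $z$, has nothing to do with this term. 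For $L>L_H$ you therefore do not recover \eqref{E:Viscosity:H:updated2}. The paper instead takes $\tilde{z}=-\partial_{\tilde{x}}\varphi(s_0,x_0,\tilde{x}_0)$ for $v$, making the $\tilde{x}'$-residual vanish in the limit and placing the entire $|\partial_x\varphi+\partial_{\tilde{x}}\varphi|=|z-\tilde{z}|$ contribution on the term $H(\cdot,\tilde{x},y,z)-H(\cdot,\tilde{x},y,\tilde{z})$, where Assumption~\ref{A:H}(iii) \emph{does} yield the correct $L_H$ coefficient.

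Separately, your ``main obstacle'' is self-inflicted. You invoke the for-each-$t$ criterion (Lemma~\ref{R:Minimax:Quantifiers}) and then worry about extracting convergent subsequences of the $t$-indexed families $(x^t,y^t)$, $(\tilde{x}^t,\tilde{y}^t)$ as $t\downarrow s$. But Definition~\ref{D:MinimaxSolution} already furnishes a \emph{single} pair $(x,y)\in\mathcal{Y}^L(s_0,x_0,y_0,z)$ with $y(t)\le u(t,x)$ for \emph{all} $t\in[s_0,T]$ (and likewise a single $(\tilde{x},\tilde{y})$ for $v$). The paper uses this directly; no compactness argument along families of paths is needed, and the only limit is the elementary $\delta\downarrow 0$ of an integral average of a continuous integrand.
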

\begin{proof}
{\color{black} First note that $u=\Upsilon$ is u.s.c. and $v$ is l.s.c. Thus $w$ is u.s.c.~and
$(s,x,\tilde{x})\mapsto \Upsilon(s,x)-w(s,x,\tilde{x})=v(s,\tilde{x})$ is l.s.c.}

{\color{black} Next,} let $(s_0,x_0,\tilde{x}_0)\in [0,T)\times\Omega\times\Omega$, $w(s_0,x_0,\tilde{x}_0)>0$,
  $\varphi\in\underline{\mathcal{A}}^L w(s_0,x_0,\tilde{x}_0)$, and 
$T_0\in (s_0,T]$ such that \eqref{E:Testfunction:H} holds.
Let $y_0:= u(s_0,x_0)$, $\tilde{y}_0:= v(s_0,{\color{black}\tilde{x}_0})$, $z:=\partial_x\varphi(s_0,x_0,\tilde{x}_0)$, and $\tilde{z}:=-\partial_{\tilde{x}}\varphi(s_0,x_0,\tilde{x}_0)$.
By the minimax semisolution properties of $u$ and $v$, there exist $(x,y)\in\mathcal{Y}^L(s_0,x_0,y_0,z)$
and $(\tilde{x},\tilde{y})\in\mathcal{Y}^L(s_0,\tilde{x}_0,\tilde{y}_0,\tilde{z})$, such that, for every $t\in [s_0,T]$,
\begin{equation}\label{E:Doubling:H}
\begin{split}
&[u(t,x)-y_0]-[v(t,\tilde{x})-\tilde{y}_0]\\ &\qquad\ge [y(t)-y_0]-[\tilde{y}(t)-\tilde{y}_0]\\ &\qquad=
\int_{s_0}^t \left[(x^\prime(s),z)-H(s,x,y(s),z)-(\tilde{x}^\prime(s),\tilde{z})+
H(s,\tilde{x},\tilde{y}(s),\tilde{z})\right]\,\dd s. 
\end{split}
\end{equation}
By \eqref{E:Testfunction:H}, $(\varphi-w)(s_0,x_0,\tilde{x}_0)\le (\varphi-w)(t,x,\tilde{x})$ 
for every $t\in [s_0,T_0]$. Thus, the  chain rule applied to $\varphi$ together with \eqref{E:Doubling:H} yields
\begin{align*}
&\int_{s_0}^t \left[
\partial_t\varphi(s,x,\tilde{x})+(x^\prime(s),\partial_x\varphi(s,x,\tilde{x}))+
(\tilde{x}^\prime(s),\partial_{\tilde{x}}\varphi(s,x,\tilde{x}))
\right]\,\dd s\\
&\qquad\ge w(t,x,\tilde{x})-w(s_0,x_0,\tilde{x}_0)\\
&\qquad \ge \int_{s_0}^t \left[
(x^\prime(s),z)- (\tilde{x}^\prime(s),\tilde{z})-H(s,x,y(s),z)+
H(s,\tilde{x},\tilde{y}(s),\tilde{z})
\right]\,\dd s
\end{align*}
for every $t\in [s_0,T_0]$.  Next, let $\delta>0$ be sufficiently small such that $y(s)>\tilde{y}(s)$ for all $s\in [s_0,s_0+\delta]$. This is possible
because $w(s_0,x_0,\tilde{x}_0)>0$ and the functions $y$ and $\tilde{y}$ are continuous. Hence, by Assumption~\ref{A:H},
\begin{equation}\label{L:Doubling:H2}
\begin{split}
0&\le \int_{s_0}^{s_0+\delta} \Bigl[
\partial_t \varphi(s,x,\tilde{x})+H(s,x,y(s),z)-H(s,\tilde{x},\tilde{y}(s),\tilde{z})\\
&
\quad\quad + (x^\prime(s),\partial_x\varphi(s,x,\tilde{x})-z)+
(\tilde{x}^\prime(s),\partial_{\tilde{x}}\varphi(s,x,\tilde{x})+\tilde{z})
\Bigr]\,\dd s\\
&\le \int_{s_0}^{s_0+\delta}\Bigl\{ \partial_t \varphi(s,x,\tilde{x})+[H(s,x,y(s),z)-
H(s,\tilde{x},y(s),z)]\\ &\quad\quad +[H(s,\tilde{x},y(s),z)-H(s,\tilde{x},y(s),\tilde{z})]\\&\quad\quad+ [H(s,\tilde{x},y(s),\tilde{z})
-H(s,\tilde{x},\tilde{y}(s),\tilde{z})]\\ 
&\quad\quad +(x^\prime(s),\partial_x\varphi(s,x,\tilde{x})-z)+
(\tilde{x}^\prime(s),\partial_{\tilde{x}}\varphi(s,x,\tilde{x})+\tilde{z})
\Bigr\}\,\dd s\\
&\le \int_{s_0}^{s_0+\delta}\Bigl\{\partial_t \varphi(s,x,\tilde{x})+M_L(1+\abs{y(s)}+\abs{z})\sup_{t\le s} \abs{x(t)-\tilde{x}(t)}\\
&\quad\quad +L_H(1+\sup_{t\le s}\abs{\tilde{x}(t)})\abs{z-\tilde{z}} +0\\
&\quad\quad +(x^\prime(s),\partial_x\varphi(s,x,\tilde{x})-z)+
(\tilde{x}^\prime(s),\partial_{\tilde{x}}\varphi(s,x,\tilde{x})+\tilde{z})
\Bigr\}\,\dd s.
\end{split}
\end{equation}
Finally, dividing \eqref{L:Doubling:H2} by $\delta$ and letting $\delta\downarrow 0$ yields \eqref{E:Viscosity:H:updated2}.
\end{proof}

\begin{theorem}\label{T:Comparison:H}
Fix $L\ge 0$. Let  $w:[0,T]\times\Omega\times\Omega\to\R$ be a viscosity $L$-subsolution of \eqref{E:Doubled:H:updated2}
with an upper semi-continuous 
parameter $\Upsilon$. Suppose that $w(T,x,x)\le 0$ 
for every $x\in\Omega$.
Then $w(t,x,x)\le 0$ for every $(t,x)\in [0,T]\times\Omega$.
\end{theorem}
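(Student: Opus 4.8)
The plan is to run a standard viscosity-solution comparison argument, but adapted to the degenerate doubled equation \eqref{E:Doubled:H:updated2}, whose nonlinearity is deliberately at least upper semi-continuous in $t$ so that no ``time-measurable'' machinery is needed at this stage. Fix $L\ge 0$ and suppose, toward a contradiction, that $\sup_{(t,x)\in[0,T]\times\Omega} w(t,x,x)=:2\theta>0$. Since $\Omega$ is not locally compact, I cannot directly maximize $w$; instead I would introduce, for small parameters $\eps,\beta>0$ and a (large) localization radius tied to the sets $\mathcal{X}^L$, an auxiliary functional of the form
\begin{align*}
\Phi(t,x,\tilde x):=w(t,x,\tilde x)-\frac{1}{2\eps}\sup_{r\le t}\abs{x(r)-\tilde x(r)}^2-\beta\,\psi(t,x,\tilde x)+\lambda(T-t),
\end{align*}
where $\psi$ is a smooth (in the $C^{1,1,1}$ sense) coercive penalization that confines the relevant variables to a compact ``tube'' of Lipschitz-like paths, and $\lambda>0$ is a small linear-in-time tilt chosen so that the terminal inequality $w(T,x,x)\le 0$ is not competitive. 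The role of $\lambda(T-t)$ is to create a strict-supersolution-type gap: on test functions it contributes $-\lambda$ to $\partial_t\varphi$, which must be absorbed.

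The key steps, in order, would be: (1) show that $\Phi$ attains a maximum at some point $(\hat t,\hat x,\hat{\tilde x})$ on a suitable compact set, using Remark~\ref{R:X} and Remark~\ref{R:Xn} to get the needed compactness/closedness of the path-tube, together with the upper semi-continuity of $w$ (and of $\Upsilon$, which enters only through continuity-type bounds); (2) argue that for $\eps$ small the maximum value is still $\ge\theta>0$, hence in particular $w(\hat t,\hat x,\hat{\tilde x})>0$, which means the subsolution inequality \eqref{E:Viscosity:H:updated2} is in force at that point; (3) observe that $\hat t<T$ for $\lambda$ fixed and $\eps$ small (otherwise the terminal condition contradicts positivity of the max), so we are genuinely at an interior-in-time point; (4) build from the penalization an admissible test function $\varphi\in\underline{\mathcal{A}}^L w(\hat t,\hat x,\hat{\tilde x})$ — essentially $\varphi = \frac{1}{2\eps}\sup_{r\le t}\abs{x(r)-\tilde x(r)}^2+\beta\psi-\lambda(T-t)+\text{const}$, checking it lies in $C^{1,1,1}$ and touches $w$ from above along the relevant path classes on a short interval $[\hat t,T_0]$; (5) compute $\partial_t\varphi$, $\partial_x\varphi$, $\partial_{\tilde x}\varphi$ at the contact point and plug into \eqref{E:Viscosity:H:updated2}. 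The crucial cancellation is that the gradient of the quadratic term $\frac{1}{2\eps}\sup_{r\le \cdot}\abs{x-\tilde x}^2$ is equal and opposite in the $x$ and $\tilde x$ slots up to the sup-localization, so $\partial_x\varphi+\partial_{\tilde x}\varphi$ is controlled by $\beta$, making the $L_H(1+\sup_{t\le s}\abs{\tilde x(t)})\abs{\partial_x\varphi+\partial_{\tilde x}\varphi}$ term $O(\beta)$; meanwhile $\abs{\partial_x\varphi}\,\sup_{t\le s}\abs{x(t)-\tilde x(t)}$ is of order $\eps^{-1}\sup\abs{x-\tilde x}^2$, which by the standard ``$\sup$ of the penalization over the diagonal is small'' estimate (here using $w(T,x,x)\le0$ and the penalization against the diagonal) tends to $0$ as $\eps\downarrow0$. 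Letting first $\beta\downarrow0$ then $\eps\downarrow0$, the inequality \eqref{E:Viscosity:H:updated2} degenerates to $-\lambda+o(1)\ge 0$, contradicting $\lambda>0$. Hence $2\theta\le 0$, i.e. $w(t,x,x)\le0$ everywhere.

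The main obstacle I anticipate is step (4) together with the $\eps\downarrow0$ estimate in step (5): one must verify that the quadratic ``path penalization'' $(t,x,\tilde x)\mapsto\frac{1}{2\eps}\sup_{r\le t}\abs{x(r)-\tilde x(r)}^2$ really is $C^{1,1,1}$ in the sense of Definition~\ref{D:C111} (computing its path derivatives — the $\partial_t$ derivative coming from the running supremum is the delicate point, since the sup may be attained at $r=t$ or earlier) and that the maximizer's quadratic defect $\eps^{-1}\sup_{r\le\hat t}\abs{\hat x(r)-\hat{\tilde x}(r)}^2\to0$. This last fact is the path-dependent analogue of the classical lemma that at a doubled-variable maximum the penalty term vanishes in the limit; it follows by comparing the value of $\Phi$ at $(\hat t,\hat x,\hat{\tilde x})$ with its value at a diagonal point and using upper semi-continuity of $w$ along the compact path-tube (Remark~\ref{R:Xn}), but the running-supremum form of the penalty requires some care. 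The homogeneous/degenerate structure $\max\{w,0\}\cdot[\,\cdots]$ is actually helpful rather than harmful here: it lets us ignore the region $\{w\le0\}$ entirely and only fight for a contradiction where $w>0$.
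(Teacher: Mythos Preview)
Your overall strategy matches the paper's: assume $w(s_0,x_0,x_0)>0$, penalize with a time-tilt plus a path-space analogue of $\abs{x-\tilde x}^2/\eps$, maximize over a compact path-tube, plug the resulting test function into \eqref{E:Viscosity:H:updated2}, and let the penalty estimates kill everything except the tilt to reach a contradiction. However, the obstacle you flag in step~(4) is fatal, not merely ``delicate'': the running-supremum penalty $(t,x,\tilde x)\mapsto\sup_{r\le t}\abs{x(r)-\tilde x(r)}^2$ is \emph{not} in $C^{1,1,1}$ in the sense of Definition~\ref{D:C111}. Its would-be spatial derivative equals $2(x(t)-\tilde x(t))$ when the supremum is attained at $r=t$ and $0$ when it is attained strictly before $t$, and these two regimes do not glue to a function that is continuous on $[0,T]\times\Omega\times\Omega$. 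The paper resolves this by using instead the functional
\[
\Psi(s,x,\tilde x)=
\frac{\sup_{t\le s}\abs{x(t)-\tilde x(t)}^2-\abs{x(s)-\tilde x(s)}^2}{\sup_{t\le s}\abs{x(t)-\tilde x(t)}^2}
+\abs{x(s)-\tilde x(s)}^2
\]
introduced by Zhou, which is genuinely $C^{1,1,1}$ (with $\partial_t\Psi=0$ and an explicit $\partial_x\Psi=-\partial_{\tilde x}\Psi$) and is two-sidedly comparable to $\sup_{t\le s}\abs{x(t)-\tilde x(t)}^2$. This exact antisymmetry also makes the $L_H(1+\sup\abs{\tilde x})\abs{\partial_x\varphi+\partial_{\tilde x}\varphi}$ term vanish \emph{identically}, so no auxiliary parameter $\beta\to 0$ is needed.

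A second, related gap is your localization term $\beta\psi$. You never specify $\psi$, and on the non-locally-compact space $\Omega$ it is unclear what a $C^{1,1,1}$ coercive function would be. The paper avoids this entirely: it fixes the bad point $(s_0,x_0)$ and maximizes $\Phi_\eps$ over the \emph{compact} set $[s_0,T]\times\mathcal{X}^L(s_0,x_0)\times\mathcal{X}^L(s_0,x_0)$ (Remark~\ref{R:X}). This suffices because the test-function class $\underline{\mathcal{A}}^L$ in \eqref{E:Testfunction:H} only requires $\varphi-w$ to be minimized over paths in $\mathcal{X}^L(s_\eps,x_\eps)\times\mathcal{X}^L(s_\eps,\tilde x_\eps)\subset\mathcal{X}^L(s_0,x_0)^2$, so a maximum of $\Phi_\eps$ on that compact tube already yields an admissible test function. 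With these two fixes (Zhou's $\Psi$ and maximizing over $\mathcal{X}^L(s_0,x_0)^2$), your steps~(2), (3), (5) go through essentially as you describe; the u.s.c.~of $\Upsilon$ is used exactly where you say, to bound $\abs{\Upsilon(s_\eps,x_\eps)}$ uniformly on the compact tube.
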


\begin{proof}
Assume that there is a point $(s_0,x_0)\in [0,T)\times\Omega$ such that
\begin{align*}
{\color{black}N_0}:=w(s_0,x_0,x_0)>0.
\end{align*}
Proceeding along the lines of the proof of Theorem~4.2 in \cite{BK18JFA}, we will obtain a contradiction.
The main difference compared to \cite{BK18JFA} is the choice of a different penalty functional.
Here, we use $\Psi:[s_0,T]\times\Omega\times\Omega\to\R$ defined by
\begin{align}\label{E:Comparison:Psi}
\Psi(s,x,\tilde{x}):=\begin{cases}
\frac{
{\color{black} (\sup\limits_{t\le s} \abs{x(t)-\tilde{x}(t)}^2-\abs{x(s)-\tilde{x}(s)}^2)^2}
}{
\sup\limits_{t\le s} \abs{x(t)-\tilde{x}(t)}^2
}+\abs{x(s)-\tilde{x}(s)}^2 &\text{if $\sup\limits_{t\le s} \abs{x(t)-\tilde{x}(t)}>0$,}\\
0&\text{otherwise.}
\end{cases}
\end{align}
This functional has been introduced in \cite{zhou2020viscosity}. {\color{black} By Lemma~2.8 in  \cite{zhou2020viscosity}, it follows that }
$\Psi\in C^{1,1,1}([s_0,T]\times\Omega\times\Omega)$ with derivatives
$\partial_t\Psi(s,x,\tilde{x})=0$ and
\begin{equation}\label{E:Comparison:Partial:x}
\begin{split}
&\partial_x \Psi(s,x,\tilde{x})=-\partial_{\tilde{x}} \Psi(s,x,\tilde{x})\\
&=\begin{cases}
{\tiny \left(2-\frac{
{\color{black} 4(\sup\limits_{t\le s} \abs{x(t)-\tilde{x}(t)}^2-\abs{x(s)-\tilde{x}(s)}^2)}
}{
\sup\limits_{t\le s} \abs{x(t)-\tilde{x}(t)}^2
}\right)\cdot
 [x(s)-\tilde{x}(s)]}&\text{if $\sup\limits_{t\le s} \abs{x(t)-\tilde{x}(t)}>0$,}\\
0&\text{otherwise.}
\end{cases}
\end{split}
\end{equation}

Given $\eps>0$, define $\Phi_\eps:[s_0,T]\times\Omega\times\Omega\to\R$ by
\begin{align}\label{E:Comp:Phi:eps}
\Phi_\eps(s,x,\tilde{x}):=w(s,x,\tilde{x})-\frac{T-s}{T-s_0}\cdot \frac{{\color{black}N_0}}{2}-\frac{1}{\eps} \Psi(s,x,\tilde{x}).
\end{align}
Fix a point $k_\eps=(s_\eps,x_\eps,\tilde{x}_\eps)$ at which the u.s.c.~map $\Phi_\eps$ attains a maximum on the
compact set 
$[s_0,T]\times\mathcal{X}^L(s_0,x_0)\times\mathcal{X}^L(s_0,x_0)$. Note that
\begin{align}\label{E:Comp:Meps}
{\color{black}N_\eps}:=\Phi_\eps(k_\eps)\ge \Phi_\eps(s_0,x_0,x_0)=w(s_0,x_0,x_0)-\frac{{\color{black}N_0}}{2}=\frac{{\color{black}N_0}}{2}.
\end{align}
Thus
\begin{align}\label{E1:DoubledCompar}
w(k_\eps)\ge \frac{{\color{black}N_0}}{2}+\frac{T-s_\eps}{T-s_0}\cdot\frac{{\color{black}N_0}}{2}+\frac{1}{\eps}\Psi(k_\eps)\ge \frac{{\color{black}N_0}}{2}>0.
\end{align}
Moreover, we have
\begin{align}\label{E:Prop3.7:UsersGuide}
{\color{black} \Psi(k_\eps)/\eps\to 0\quad\text{as $\eps\downarrow 0$}\quad\text{(cf.~Proposition~3.7 and its  
proof in \cite{CrandallIshiiLions})}}
\end{align}
and
{\color{black}thus}  $s_\eps<T$ 
if  $\eps$ is sufficiently small{\color{black}, which we shall assume from now on.}  

Next, define a map $\varphi_\eps:[s_\eps,T]\times\Omega\times\Omega\to\R$ by
\begin{align*}
\varphi_\eps(s,x,\tilde{x}):={\color{black}N_\eps}+\frac{T-s}{T-s_0}\cdot\frac{{\color{black}N_0}}{2}+\frac{1}{\eps} \Psi(s,x,\tilde{x}).
\end{align*}
Then $\varphi_\eps\in\underline{\mathcal{A}}^L w(k_\eps)$ with corresponding time $T_0=T$ because{\color{black},
by \eqref{E:Comp:Meps} and \eqref{E:Comp:Phi:eps},}
{\color{black}\begin{align*}
\varphi_\eps(k_\eps)-w(k_\eps) &={\color{black}N_\eps}+\frac{T-s_\eps}{T-s_0}\cdot\frac{{\color{black}N_0}}{2}+\frac{1}{\eps} \Psi(k_\eps)-w(k_\eps)\\
&=w(k_\eps)-\frac{T-s_\eps}{T-s_0}\cdot\frac{{\color{black}N_0}}{2}-\frac{1}{\eps}\Psi(k_\eps)
+\frac{T-s_\eps}{T-s_0}\cdot\frac{{\color{black}N_0}}{2}+\frac{1}{\eps} \Psi(k_\eps)-w(k_\eps)
\\ &= 0\\
&\le {\color{black}N_\eps}-\Phi_\eps(s,x,\tilde{x})
\\ & ={\color{black}N_\eps}+\frac{T-s}{T-s_0}\cdot\frac{{\color{black}N_0}}{2}+\frac{1}{\eps} \Psi(s,x,\tilde{x})-w(s,x,\tilde{x})
\\ &=\varphi_\eps(s,x,\tilde{x})-w(s,x,\tilde{x})
\end{align*}}
for every $(s,x,\tilde{x})\in [s_\eps,T]\times\mathcal{X}^L(s_\eps,x_\eps)\times
\mathcal{X}^L(s_\eps,\tilde{x}_\eps)$. Note that
\begin{align*}
\partial_t \varphi_\eps(s,x,\tilde{x})=-\frac{{\color{black}N_0}}{2(T-s_0)}\,\,\text{and}\,\,
\partial_x \varphi_\eps(s,x,\tilde{x})=-\partial_{\tilde{x}}\varphi_\eps(s,x,\tilde{x})=\frac{1}{\eps}\partial_x\Psi(s,x,\tilde{x}).
\end{align*}
Hence, since $w$ is a viscosity $L$-subsolution of \eqref{E:Doubled:H:updated2}
and  since \eqref{E1:DoubledCompar} holds, we have
\begin{equation}\label{E2:DoubledCompar}
\begin{split}
0&\le -\frac{{\color{black}N_0}}{2(T-s_0)}+M_L\left(1+\abs{\Upsilon(s_\eps,x_\eps)}+\frac{\abs{\partial_x\Psi(k_\eps)}}{\eps}\right)
\cdot \sup_{t\le s_\eps} \abs{x_\eps(t)-\tilde{x}_\eps(t)}\\
&\le  -\frac{{\color{black}N_0}}{2(T-s_0)}+\tilde{M}_L\left(1+\frac{\abs{\partial_x\Psi(k_\eps)}}{\eps}\right)
\cdot \sup_{t\le s_\eps} \abs{x_\eps(t)-\tilde{x}_\eps(t)}
\end{split}
\end{equation}
for 
a constant $\tilde{M}_L>0$ independent from $\eps$.
The second line of \eqref{E2:DoubledCompar} follows from
{\color{black}
\begin{align*}
\Upsilon(s_\eps,x_\eps)&\le \max\{\Upsilon(s,x):(s,x)\in [s_0,T]\times\mathcal{X}^L(s_0,x_0)\}<\infty\text{ and}\\
\Upsilon(s_\eps,x_\eps)&=w(k_\eps)+[\Upsilon(s_\eps,x_\eps)-w(k_\eps)] \\
&>0+ [\Upsilon(s_\eps,x_\eps)-w(k_\eps)]\qquad\qquad\qquad \text{(by \eqref{E1:DoubledCompar})}\\
& \ge \min\{\Upsilon(s,x)-w(s,x,\tilde{x}):(s,x,\tilde{x})\in [s_0,T]\times\mathcal{X}^L(s_0,x_0)\times
\mathcal{X}^L(s_0,x_0)\}\\ &>-\infty
\end{align*}
thanks to 
$\Upsilon$ being u.s.c., 
$(s,x,\tilde{x})\mapsto \Upsilon(s,x)-w(s,x,\tilde{x})$ being l.s.c., and $\mathcal{X}^L(s_0,x_0)$ being compact.}
It remains to note that, by \eqref{E:Comparison:Partial:x},  
\begin{align*}
\abs{\partial_x\Psi(s,x,\tilde{x})}\le 2\abs{x(s)-\tilde{x}(s)}
\end{align*}
 and also, by \cite[{\color{black} Lemma~2.8}]{zhou2020viscosity},
\begin{align*}
\frac{3-\sqrt{5}}{2}\,\sup_{t\le s} \abs{x(t)-\tilde{x}(t)}^2\le \Psi(s,x,\tilde{x})\le 2\, \sup_{t\le s} \abs{x(t)-\tilde{x}(t)}^2
\end{align*}
for every $(s,x,\tilde{x})\in [0,T]\times\Omega\times\Omega$. 
{\color{black} Together with \eqref{E:Prop3.7:UsersGuide}, we have }
\begin{align*}
\left(1+\frac{1}{\eps}\abs{\partial_x\Psi(k_\eps)}\right)
\cdot \sup_{t\le s_\eps} \abs{x_\eps(t)-\tilde{x}_\eps(t)}\le 
{\color{black} C}\sqrt{\Psi(k_\eps)}+\frac{C}{\eps}\Psi(k_\eps)\to 0\quad\text{as $\eps\downarrow 0$}
\end{align*}
for some constant $C>0$, which 
contradicts \eqref{E2:DoubledCompar}.
\end{proof}

Lemma~\ref{L:Doubling:H} and Theorem~\ref{T:Comparison:H}  immediately yield the following
result.
\begin{cor}\label{C:Comparison}
Let $L\ge 0$,  $u$ be a minimax $L$-subsolution, and
$v$ be a minimax $L$-supersolution of \eqref{E:PPDE:H}. Then $u\le v$ on $[0,T]\times\Omega$.
\end{cor}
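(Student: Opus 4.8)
The plan is to apply the two preceding results to the doubled function
$w(t,x,\tilde{x}):=u(t,x)-v(t,\tilde{x})$ on $[0,T]\times\Omega\times\Omega$.
First I would invoke Lemma~\ref{L:Doubling:H}: since $u$ is a minimax $L$-subsolution and $v$ a minimax $L$-supersolution of \eqref{E:PPDE:H}, the function $w$ is a viscosity $L$-subsolution of the doubled equation \eqref{E:Doubled:H:updated2} with parameter $\Upsilon=u$. Next I would verify the hypotheses of Theorem~\ref{T:Comparison:H}. The parameter $\Upsilon=u$ is upper semi-continuous because, by Definition~\ref{D:MinimaxSolution}~(ii), a minimax $L$-subsolution satisfies $u\in\mathrm{USC}([0,T]\times\Omega)$. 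On the diagonal at the terminal time, Definition~\ref{D:MinimaxSolution} gives $u(T,\cdot)\le h$ and $v(T,\cdot)\ge h$ on $\Omega$, whence $w(T,x,x)=u(T,x)-v(T,x)\le h(x)-h(x)=0$ for every $x\in\Omega$.

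With these two facts in place, Theorem~\ref{T:Comparison:H} applies directly and yields $w(t,x,x)\le 0$ for every $(t,x)\in[0,T]\times\Omega$, i.e., $u(t,x)\le v(t,x)$ for all $(t,x)\in[0,T]\times\Omega$, which is exactly the assertion of the corollary.

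As for the main obstacle: there is essentially none left at the level of the corollary itself — all the substance has been absorbed into Lemma~\ref{L:Doubling:H} (the passage from the minimax semisolution inequalities to a viscosity subsolution property, via the chain rule and the structural estimate \eqref{L:Doubling:H2}) and into Theorem~\ref{T:Comparison:H} (the contradiction argument with the penalty functional $\Psi$ of \cite{zhou2020viscosity}, which is precisely what circumvents the time-measurability of $H$). The only points in the corollary's proof worth a sentence are the two checks above: the upper semi-continuity of $\Upsilon=u$ and the terminal inequality $w(T,x,x)\le 0$, both of which are immediate from the definition of minimax semisolution. Hence the proof is a two-line deduction, and I would present it as such rather than redoing any of the work behind Lemma~\ref{L:Doubling:H} or Theorem~\ref{T:Comparison:H}.
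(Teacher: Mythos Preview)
Your proposal is correct and follows exactly the paper's own approach: the corollary is stated as an immediate consequence of Lemma~\ref{L:Doubling:H} and Theorem~\ref{T:Comparison:H}, and the two checks you spell out (upper semi-continuity of $\Upsilon=u$ and $w(T,x,x)\le 0$) are precisely the hypotheses that need verifying.
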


\section{Existence}\label{S:Existence}
We show existence of minimax solutions via Perron's method (see \cite[section~V.2]{BardiCapuzzoDolcetta}
regarding a corresponding treatment for non-continuous viscosity solutions in the non-path-de\-pen\-dent case).
More precisely,  the scheme\footnote{
This scheme, which provides existence for minimax solutions, actually predates ``Perron's method" for viscosity
solutions introduced in \cite{Ishii87Perron}
(we refer to  the discussion in \cite[section~10.8]{Subbotin} for more details).
} in \cite[section~8]{Subbotin} is adapted 
to  path-dependent Hamilton-Jacobi equations.
In the case of Hamiltonians without $u$-dependence,
this has already been done in \cite[section~7]{Lukoyanov03} 
(cf.~also \cite[section~5]{BK18JFA},  
whose structure we 
 follow here).

\begin{definition} \label{D:NonContSol}
Let $L\ge 0$ and $u:[0,T]\times\Omega\to [-\infty,\infty]$.

(i) $u$ is a \emph{non-continuous minimax $L$-supersolution} 
of \eqref{E:PPDE:H} 
if $u(T,\cdot)= h$ on $\Omega$ and, for every $(t_0,x_0,z)\in [0,T)\times\Omega\times\R^d$, 
 $y_0> u(t_0,x_0)$, and $T_0\in (t_0,T]$,
 \begin{align}\label{E:NonContSuper}
 \inf_{(x,y)\in\mathcal{Y}^L(t_0,x_0,y_0,z)} \{u(T_0,x)-y(T_0)\}\le 0.
 \end{align}

ii) $u$ is a \emph{non-continuous minimax $L$-subsolution} 
of \eqref{E:PPDE:H} 
if $u(T,\cdot)= h$ on $\Omega$ and, for every $(t_0,x_0,z)\in [0,T)\times\Omega\times\R^d$, 
 $y_0< u(t_0,x_0)$, and $T_0\in (t_0,T]$,
 \begin{align}\label{E:NonContSub}
 \sup_{(x,y)\in\mathcal{Y}^L(t_0,x_0,y_0,z)} \{u(T_0,x)-y(T_0)\}\ge 0.
 \end{align}
 
 (iii) $u$ is a \emph{non-continuous minimax $L$-solution} 
of \eqref{E:PPDE:H} if it is both, a non-continuous minimax $L$-super-  and 
a non-continuous minimax $L$-subsolution of \eqref{E:PPDE:H}
\end{definition}

First, we establish existence of non-continuous minimax supersolutions. Following 
\cite[Proposition~8.6]{Subbotin},
 we define functions 
$\mu^z_+:[0,T]\times\Omega\times\R\to [-\infty,\infty]$ and
$u^z_+:[0,T]\times\Omega\to [-\infty,\infty]$, $z\in\R^d$, by
\begin{align*}
\mu^z_+(t_0,x_0,y_0)&:=\sup_{(x,y)\in\mathcal{Y}^{(L_H)}(t_0,x_0,y_0,z)}
\{h(x)-y(T)\},\\
u^z_+(t_0,x_0)&:=\sup\{r\in\R:\,\mu^z_+(t_0,x_0,r)\ge 0\}.
\end{align*}

\begin{lemma}\label{L:NonContSuper}
Let $z\in\R^d$. Then $u^z_+$ is a non-continuous minimax $L_H$-supersolution of 
\eqref{E:PPDE:H} and 
it is $[-\infty,\infty)$-valued.
\end{lemma}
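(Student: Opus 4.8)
The plan is to verify the two defining properties of a non-continuous minimax $L_H$-supersolution for $u^z_+$: the terminal condition $u^z_+(T,\cdot)=h$, and the inequality \eqref{E:NonContSuper}. I would begin with the terminal condition. At $t_0=T$ the only admissible pair in $\mathcal{Y}^{(L_H)}(T,x_0,y_0,z)$ is the constant path $x\equiv x_0$ with $y\equiv y_0$, so $\mu^z_+(T,x_0,y_0)=h(x_0)-y_0$, which is $\ge 0$ precisely when $y_0\le h(x_0)$; hence $u^z_+(T,x_0)=h(x_0)$.

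Next I would show $u^z_+$ is $[-\infty,\infty)$-valued, i.e.\ never $+\infty$. Using Assumption~\ref{A:H}~(vi) and Gr\"onwall's inequality, one gets a lower bound $y(T)\ge -c(1+\abs{y_0}+\sup_{s}\abs{x(s)})$ along any $(x,y)\in\mathcal{Y}^{(L_H)}(t_0,x_0,y_0,z)$ that in fact forces $y(T)\to+\infty$ as $y_0\to+\infty$ (for instance, when $H$ is bounded below on the relevant compact set one has $y(T)\ge y_0 + (x(T)-x_0(t_0),z) - c$, uniformly). Since $h$ is continuous and $\mathcal{X}^{(L_H)}(t_0,x_0)$ is compact (Remark~\ref{R:X}), $h$ is bounded on it, so $h(x)-y(T)<0$ for all admissible $(x,y)$ once $y_0$ is large enough; thus $\mu^z_+(t_0,x_0,y_0)<0$ for large $y_0$ and $u^z_+(t_0,x_0)<\infty$.

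For the supersolution inequality, fix $(t_0,x_0,z)$, $y_0>u^z_+(t_0,x_0)$, and $T_0\in(t_0,T]$, and I must show $\inf_{(x,y)\in\mathcal{Y}^{(L_H)}(t_0,x_0,y_0,z)}\{u^z_+(T_0,x)-y(T_0)\}\le 0$. The idea is a dynamic-programming / concatenation argument: by choice of $y_0$ we have $\mu^z_+(t_0,x_0,y_0)<0$, i.e.\ $h(x)-y(T)<0$ for every $(x,y)$ in the set. Suppose for contradiction the infimum were some $\alpha>0$; then $y(T_0)<u^z_+(T_0,x)$ for all admissible $(x,y)$, so for each such pair, by definition of $u^z_+(T_0,x)$ as a supremum, there is $r>y(T_0)$ with $\mu^z_+(T_0,x,r)\ge 0$, meaning there is a continuation $(\hat{x},\hat{y})\in\mathcal{Y}^{(L_H)}(T_0,x,r,z)$ with $h(\hat x)-\hat y(T)\ge 0$ (up to an $\eps$; one takes $r$ within $\eps$ of the supremum and uses that the defining functional is $\ge -\eps$). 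Concatenating $(x,y)$ on $[t_0,T_0]$ with $(\hat x,\hat y)$ on $[T_0,T]$ — after shifting the concatenated $y$ down by the jump $r-y(T_0)>0$, which only decreases the terminal $y$-value — produces a pair in $\mathcal{Y}^{(L_H)}(t_0,x_0,y_0,z)$ (here one uses the flow/concatenation structure of $\mathcal{Y}^{L}$ and the monotonicity in $y_0$) whose terminal value satisfies $h-y(T)\ge -\eps$. Taking $\eps\downarrow 0$ (or a compactness/selection argument over a minimizing sequence, invoking Remark~\ref{R:Yn} and the compactness in Remark~\ref{R:Y}) contradicts $\mu^z_+(t_0,x_0,y_0)<0$.

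The main obstacle is the concatenation step: one must check carefully that gluing an $\mathcal{Y}$-path on $[t_0,T_0]$ to one on $[T_0,T]$, with the downward $y$-shift, stays in $\mathcal{Y}^{(L_H)}(t_0,x_0,y_0,z)$ — in particular that the shifted $y$ still solves the correct integral equation (it does, since the ODE for $y$ has no explicit dependence on the absolute value of $y_0$ beyond the initial condition, only on $y(s)$ through $H$, so a downward shift is \emph{not} a solution in general). This is precisely why one works with the inequalities $y_0>u^z_+(t_0,x_0)$ and the \emph{infimum} in \eqref{E:NonContSuper}: one should instead argue that if the infimum were $>0$ then starting from a \emph{smaller} $y_0'$ (still $>u^z_+$) one could run the concatenated path without any shift, contradicting the definition of $u^z_+(t_0,x_0)$ as a supremum of levels $r$ with $\mu^z_+\ge 0$. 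Getting the bookkeeping of these initial-value shifts exactly right, and handling the $\eps$ in the two nested suprema via the compactness results of Remarks~\ref{R:Y} and~\ref{R:Yn}, is the technical heart of the proof; the rest is routine.
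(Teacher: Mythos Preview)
Your proposal has a genuine gap: you conflate the fixed parameter $z$ that defines $u^z_+$ with the arbitrary direction appearing in the supersolution condition. Definition~\ref{D:NonContSol}~(i) requires \eqref{E:NonContSuper} to hold for \emph{every} $\tilde{z}\in\R^d$, i.e., you must show
\[
\inf_{(x,y)\in\mathcal{Y}^{L_H}(t_0,x_0,y_0,\tilde{z})}\{u^z_+(T_0,x)-y(T_0)\}\le 0
\]
for $\tilde{z}$ possibly different from $z$. Your concatenation argument, even when patched, uses that the path on $[t_0,T_0]$ lies in $\mathcal{Y}^{L_H}(t_0,x_0,y_0,z)$ so that its continuation on $[T_0,T]$ computes $\mu^z_+(T_0,\cdot,\cdot)$; but the supersolution test path must lie in $\mathcal{Y}^{L_H}(t_0,x_0,y_0,\tilde{z})$. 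A generic $\tilde{z}$-path cannot be continued by a $z$-path to land back in either set.

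The paper resolves this with a single stroke you are missing: it invokes Remark~\ref{R:Y} to pick
\[
(\tilde{x},\tilde{y})\in\mathcal{Y}^{L_H}(t_0,x_0,y_0,z)\cap\mathcal{Y}^{L_H}(t_0,x_0,y_0,\tilde{z}),
\]
which is non-empty thanks to Assumption~\ref{A:H}~(iii). Because $(\tilde{x},\tilde{y})$ lies in the $z$-set, restricting the supremum defining $\mu^z_+(t_0,x_0,y_0)$ to paths that agree with $(\tilde{x},\tilde{y})$ on $[0,T_0]$ gives directly $\mu^z_+(T_0,\tilde{x},\tilde{y}(T_0))\le\mu^z_+(t_0,x_0,y_0)<0$; and because $(\tilde{x},\tilde{y})$ also lies in the $\tilde{z}$-set, this single pair witnesses \eqref{E:NonContSuper}. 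No contradiction argument, no $\eps$, and no $y$-shift are needed. Two smaller points: your finiteness sketch (``$H$ bounded below on the relevant compact set'') fails because $H$ depends on the unbounded $y$; the paper instead proves the monotonicity inequality $\mu^z_+(t_0,x_0,y_0+s)\le\mu^z_+(t_0,x_0,y_0)-s$ from Assumption~\ref{A:H}~(v), which immediately gives both $u^z_+<\infty$ and, in the interior step, the implication $\mu^z_+(T_0,\tilde{x},\tilde{y}(T_0))<0\Rightarrow u^z_+(T_0,\tilde{x})\le\tilde{y}(T_0)$. This same monotonicity also eliminates the shift problem you flagged: once $\mu^z_+(T_0,x,y(T_0))\ge 0$, no $r>y(T_0)$ is needed.
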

\begin{proof}
(i) Boundary condition: Let $t_0=T$ and $x_0\in\Omega$. 
 Since $\mu^z_+(T,x_0,y_0)=h(x_0)-y_0$ for all $y_0\in\R$, 
we have $u^z_+(T,x_0)=\sup\{r\in\R:\, h(x_0)-r\ge 0\}=h(x_0)$.

(ii) Interior condition: Let $(t_0,x_0,\tilde{z})\in [0,T)\times\Omega\times\R^d$,
 $y_0>u^z_+(t_0,x_0)$, and $T_0\in (t_0,T]$.   
 Also pick a pair 
 $(\tilde{x},\tilde{y})\in\mathcal{Y}^{L_H}(t_0,x_0,y_0,z)
 \cap\mathcal{Y}^{L_H}(t_0,x_0,y_0,\tilde{z})$.
This is possible according to Remark~\ref{R:Y}.
To verify the interior minimax supersolution property,
it suffices to show that $u^z_+(T_0,\tilde{x})\le \tilde{y}(T_0)$.
 To this end, note first that $\mu^z_+(t_0,x_0,y_0)<0$ because otherwise
 $u^z_+(t_0,x_0)\ge y_0$, which would contradict $y_0> u^z_+(t_0,x_0)$.
 Therefore
 \begin{equation}\label{E3:L:NonContSuper}
 \begin{split}
 0&>\mu^z_+(t_0,x_0,y_0)\\
 &\ge \sup\{h(x)-y(T):\,(x,y)\in\mathcal{Y}^{(L_H)}(t_0,x_0,y_0,z)\\
 &\qquad\qquad\qquad\qquad\qquad\qquad
 \text{ and $(x,y)\vert_{[0,T_0]}=(\tilde{x},\tilde{y})\vert_{[0,T_0]}$}\}\\
 &=\sup\{h(x)-y(T):\, (x,y)\in\mathcal{Y}^{(L_H)}(T_0,\tilde{x},\tilde{y}(T_0),z)\}\\
 &=\mu^{z}_+(T_0,\tilde{x},\tilde{y}(T_0)).
\end{split}
 \end{equation}
Let us write $u^z_+(T_0,\tilde{x})=\sup R$, where
$R:=\{r\in\R:\,\mu^z_+(T_0,\tilde{x},r)\ge 0\}$. By \eqref{E3:L:NonContSuper}, $\tilde{y}(T_0)\not\in R$.
Note that, for every $s\ge 0$, we have
\begin{align}\label{E4:L:NonContSuper}
\mu^z_+(T_0,\tilde{x},\tilde{y}(T_0)+s)\le \mu^z_+(T_0,\tilde{x},\tilde{y}(T_0))-s\le 0
\end{align}
(this follows from Assumption~\ref{A:H}~(v) and can be shown exactly as equation~(8.4) in \cite{Subbotin}).
Thus $R\subseteq [-\infty,\tilde{y}(T_0))$, i.e., $u^z_+(T_0,\tilde{x})\le \tilde{y}(T_0)$.

(iii) $u^z_+$ is $[-\infty,\infty)$-valued: First, note that 
$\mu^z_+$ is $[-\infty,\infty)$-valued due to the compactness of the sets $\mathcal{Y}^L(t_0,x_0,y_0,z)$
(Remark~\ref{R:Y}) and the continuity of $h$ (Assumption~\ref{A:h}).
Now assume that $u^z_+(t_0,x_0)=\infty$ for some $(t_0,x_0)\in [0,T]\times\Omega$.
Then there is an increasing sequence $(r_n)_n$ in $\R$ with $\mu^z_+(t_0,x_0,r_n)\ge 0$ and
$r_n\to\infty$ as $n\to\infty$. But, by \eqref{E4:L:NonContSuper}, 
\begin{align*}
0\le \mu^z_+(t_0,x_0,r_n)=\mu^z_+(t_0,x_0,r_1+(r_n-r_1))\le \mu^z_+(t_0,x_0,r_1)-(r_n-r_1)\to -\infty
\end{align*}
as $n\to\infty$. This contradicts our assumption and thus concludes the proof.
\end{proof}

Define $u_0:[0,T]\times\Omega\to [-\infty,\infty]$ by
\begin{align*}
u_0(t,x):=\inf\{u(t,x):\,\text{$u$ is a non-continuous minimax $L_H$-supersolution
 of \eqref{E:PPDE:H}}\}.
\end{align*}

\begin{proposition}\label{P:NonContSol}
The function $u_0$ is a non-continuous minimax $L_H$-solution of \eqref{E:PPDE:H} and it is 
$\R$-valued.
\end{proposition}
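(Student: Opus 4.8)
The plan is to follow the classical Perron-type argument of \cite[section~8]{Subbotin}, adapted to the path-dependent $u$-dependent setting. First I would check that $u_0$ is well-defined and $[-\infty,\infty)$-valued: by Lemma~\ref{L:NonContSuper}, each $u^z_+$ is a non-continuous minimax $L_H$-supersolution, so the class over which the infimum is taken is non-empty, and each member is $[-\infty,\infty)$-valued, hence so is $u_0$. The boundary condition $u_0(T,\cdot)=h$ is immediate since every competitor equals $h$ at $t=T$. For the lower bound (showing $u_0>-\infty$, so that it is genuinely $\R$-valued), I would construct an explicit $\R$-valued minimax subsolution — the natural candidate is $u^z_-$ defined symmetrically to $u^z_+$ (replacing $\sup$ by $\inf$ over $\mathcal{Y}^{(L_H)}$ and reading off where $\mu^z_-\le 0$), show it is a non-continuous minimax $L_H$-subsolution by the mirror of Lemma~\ref{L:NonContSuper}, and then invoke the comparison result: any non-continuous minimax subsolution lies below any non-continuous minimax supersolution. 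The cleanest route is to note that the arguments behind Corollary~\ref{C:Comparison} (via Lemma~\ref{L:Doubling:H} and Theorem~\ref{T:Comparison:H}), suitably read for the non-continuous notion, give $u^z_-\le u$ for every non-continuous minimax supersolution $u$; taking the infimum over $u$ yields $u^z_-\le u_0$, so $u_0$ is finite-valued.

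Next I would verify that $u_0$ is itself a non-continuous minimax $L_H$-\emph{supersolution}. Fix $(t_0,x_0,z)\in[0,T)\times\Omega\times\R^d$, $y_0>u_0(t_0,x_0)$, and $T_0\in(t_0,T]$. By definition of the infimum there is a non-continuous minimax supersolution $u$ with $u(t_0,x_0)<y_0$; applying \eqref{E:NonContSuper} to $u$ gives a pair $(x,y)\in\mathcal{Y}^{L_H}(t_0,x_0,y_0,z)$ with $u(T_0,x)\le y(T_0)$, hence $u_0(T_0,x)\le u(T_0,x)\le y(T_0)$, which is \eqref{E:NonContSuper} for $u_0$. (Strictly one gets the infimum over the $\mathcal{Y}$-set is $\le 0$; since a single competitor already achieves it, this is fine.)

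The substantive part is showing $u_0$ is a non-continuous minimax \emph{subsolution}. This is the standard Perron maximality argument and is the main obstacle. Suppose not: then there exist $(t_0,x_0,z)$, $y_0<u_0(t_0,x_0)$, and $T_0\in(t_0,T]$ with
\begin{align*}
\sup_{(x,y)\in\mathcal{Y}^{L_H}(t_0,x_0,y_0,z)}\{u_0(T_0,x)-y(T_0)\}<0.
\end{align*}
Using the compactness in Remark~\ref{R:Y} and Remark~\ref{R:Yn} together with the lower semicontinuity available for $u_0$ (as an infimum of l.s.c.-below, or at least the one-sided regularity one can extract), one upgrades this strict inequality to a uniform gap on a neighborhood and then modifies $u_0$ on a small ``tube'' of paths emanating from $(t_0,x_0)$ by pushing its value down toward $y_0$, producing a new function $\hat u\le u_0$ that is strictly smaller at $(t_0,x_0)$ yet is still a non-continuous minimax $L_H$-supersolution of \eqref{E:PPDE:H}. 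This contradicts the minimality of $u_0$. The delicate points are: (a) making the local modification so that the supersolution inequality \eqref{E:NonContSuper} is preserved for \emph{all} starting points, not just the one being lowered — here one uses that along any admissible $\mathcal{Y}$-trajectory the modification either leaves the endpoint value unchanged or the trajectory exits the tube, where the old supersolution property applies; and (b) handling the $u$-dependence of $H$, which makes the sets $\mathcal{Y}^{L_H}(\cdot)$ depend on the running value $y$, so the modification must be compatible with the coupled ODE for $y$ — this is exactly why the sets $\mathcal{Y}^L(s_0,x_0,y_0,z)$ rather than merely $\mathcal{X}^L(s_0,x_0)$ were introduced, and the construction mirrors \cite[Proposition~8.7 and Lemma~8.8]{Subbotin} with $\mathcal{X}$ replaced by $\mathcal{Y}$ throughout. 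Once both directions are established, $u_0$ is a non-continuous minimax $L_H$-solution and, by the finiteness bounds above, $\R$-valued.
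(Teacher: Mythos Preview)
Two places diverge from the paper, and both have genuine gaps.

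\textbf{Lower bound $u_0>-\infty$.} You invoke Corollary~\ref{C:Comparison}, but that comparison is proved only for \emph{semicontinuous} minimax semisolutions (Definition~\ref{D:MinimaxSolution}), not for the non-continuous ones of Definition~\ref{D:NonContSol}; the proof via Lemma~\ref{L:Doubling:H} and Theorem~\ref{T:Comparison:H} uses semicontinuity in an essential way (test functions attain extrema, etc.), so ``suitably read for the non-continuous notion'' is not available. The paper gets $u^z_-\le u$ for every non-continuous supersolution $u$ by a one-line direct argument: if $u(t_0,x_0)<u^z_-(t_0,x_0)$, pick $y_0$ strictly between; then \eqref{E:NonContSuper} with $T_0=T$ and $u(T,\cdot)=h$ gives $\mu^z_-(t_0,x_0,y_0)\le 0$, hence $u^z_-(t_0,x_0)\le y_0$, a contradiction. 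No comparison principle is needed or used.

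\textbf{Subsolution property of $u_0$.} Your plan is the viscosity-style Perron bump (local modification on a ``tube''). This is a genuinely different route from the paper and, as stated, has a real obstacle: functions in Definition~\ref{D:NonContSol} carry \emph{no} semicontinuity hypothesis, so $u_0$, being an infimum of such functions, has no a priori regularity. The step ``upgrade this strict inequality to a uniform gap on a neighborhood'' therefore has nothing to stand on. The paper avoids this entirely by following \cite[Proposition~8.5]{Subbotin}: given the data $(t_1,x_1,z,y_1,T_1)$ at which one wants \eqref{E:NonContSub}, it defines
\begin{align*}
\mu^z(t_0,x_0,y_0)&:=\sup_{(x,y)\in\mathcal{Y}^{L_H}(t_0,x_0,y_0,z)}\{u_0(T_1,x)-y(T_1)\},\\
u^z(t_0,x_0)&:=\begin{cases}\sup\{r\in\R:\,\mu^z(t_0,x_0,r)\ge 0\}&\text{if }t_0\le T_1,\\ u_0(t_0,x_0)&\text{if }t_0>T_1,\end{cases}
\end{align*}
verifies by a direct case analysis (on the relative position of $t_0$, $T_0$, $T_1$, using Remark~\ref{R:Y} for the intersection $\mathcal{Y}^{L_H}(\cdot,z)\cap\mathcal{Y}^{L_H}(\cdot,\tilde z)$) that $u^z$ is itself a non-continuous $L_H$-supersolution, and then reads off \eqref{E:NonContSub} from the minimality $u_0\le u^z$ together with the monotonicity $\mu^z(t_1,x_1,y_1+(r_1-y_1))\le\mu^z(t_1,x_1,y_1)-(r_1-y_1)$. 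No neighborhood, no bump, no regularity is required.
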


\begin{theorem}\label{T:Perron}
The function $u_0$ is the unique minimax $L_H$-solution of \eqref{E:PPDE:H}.
\end{theorem}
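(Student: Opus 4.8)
The plan is to combine the existence result for the non-continuous minimax solution $u_0$ (Proposition~\ref{P:NonContSol}) with the comparison principle (Corollary~\ref{C:Comparison}), so that the only real work is to promote $u_0$ from a \emph{non-continuous} minimax solution to an honest (semicontinuous) minimax sub- and supersolution in the sense of Definition~\ref{D:MinimaxSolution}, and then to read off uniqueness.

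First I would address \textbf{regularity}. By Proposition~\ref{P:NonContSol}, $u_0$ is $\R$-valued. I would pass to its upper and lower semicontinuous envelopes $u_0^*\in\mathrm{USC}([0,T]\times\Omega)$ and $(u_0)_*\in\mathrm{LSC}([0,T]\times\Omega)$ (taken relative to $\mathbf{d}_\infty$, or equivalently using the sets $\mathcal{X}^L$ to stay within compacta — this is exactly how the analogous step is handled in \cite[section~5]{BK18JFA} and \cite[section~7]{Lukoyanov03}). The key claim is that $u_0^*$ is a minimax $L_H$-subsolution of \eqref{E:PPDE:H} and $(u_0)_*$ is a minimax $L_H$-supersolution. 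To see the supersolution property of $(u_0)_*$: fix $(s_0,x_0,z)$ and $y_0\ge (u_0)_*(s_0,x_0)$; by lower semicontinuity of $(u_0)_*$ and the non-continuous supersolution property \eqref{E:NonContSuper} of $u_0$ applied along a sequence of times $T_0$ and perturbed initial data with $y_0'>u_0$, one extracts, using the compactness of $\mathcal{Y}^{L_H}(s_0,x_0,y_0,z)$ (Remark~\ref{R:Y}) and the closure-under-limits property (Remark~\ref{R:Yn}), a single pair $(x,y)\in\mathcal{Y}^{L_H}(s_0,x_0,y_0,z)$ with $y(t)\ge (u_0)_*(t,x)$ for all $t$; the passage from ``for each $t$ a pair'' to ``one pair for all $t$'' is precisely Lemma~\ref{R:Minimax:Quantifiers}. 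The subsolution property of $u_0^*$ is symmetric, using \eqref{E:NonContSub}. One also checks the terminal conditions from $u_0(T,\cdot)=h$ together with continuity of $h$ (Assumption~\ref{A:h}), so that $u_0^*(T,\cdot)=(u_0)_*(T,\cdot)=h$.

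Next I would apply \textbf{comparison}. Since $u_0^*$ is a minimax $L_H$-subsolution and $(u_0)_*$ is a minimax $L_H$-supersolution, Corollary~\ref{C:Comparison} gives $u_0^*\le (u_0)_*$ on $[0,T]\times\Omega$. As always $(u_0)_*\le u_0\le u_0^*$, all three coincide, so $u_0$ is continuous and is simultaneously a minimax $L_H$-sub- and supersolution, i.e., a minimax $L_H$-solution of \eqref{E:PPDE:H}. This proves existence. For uniqueness, if $v$ is any minimax $L_H$-solution, then $v$ is in particular a minimax $L_H$-subsolution and $u_0$ is a minimax $L_H$-supersolution, so Corollary~\ref{C:Comparison} yields $v\le u_0$; reversing the roles ($u_0$ subsolution, $v$ supersolution) gives $u_0\le v$, hence $v=u_0$.

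The main obstacle I anticipate is the semicontinuous-envelope step — specifically verifying that $(u_0)_*$ (resp.\ $u_0^*$) inherits the minimax supersolution (resp.\ subsolution) property from the non-continuous notion. This requires carefully combining three ingredients: a diagonal/partition argument over shrinking time-intervals (as in Lemma~\ref{R:Minimax:Quantifiers}), the stability of the constraint sets $\mathcal{Y}^{L_H}$ under convergence of data (Remark~\ref{R:Yn}), and an approximation of $y_0\ge (u_0)_*(s_0,x_0)$ from data where the strict inequality $y_0'>u_0(s_0',x_0')$ of Definition~\ref{D:NonContSol} is available; the no-blow-up control from Assumption~\ref{A:H}~(vi) keeps the relevant $y$-trajectories in a compact set throughout. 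Everything else is a direct appeal to results already established in the excerpt.
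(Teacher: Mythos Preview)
Your approach is essentially the paper's: pass to the semicontinuous envelopes $(u_0)^*$ and $(u_0)_*$, show they are minimax $L_H$-sub- and supersolutions, apply Corollary~\ref{C:Comparison}, and conclude $u_0=(u_0)^*=(u_0)_*$. Two points you underestimate, however.

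First, the terminal condition $(u_0)^*(T,\cdot)\le h$ does \emph{not} follow from $u_0(T,\cdot)=h$ together with continuity of $h$: the envelope at $(T,x_0)$ sees values $u_0(t_n,x_n)$ with $t_n<T$, so continuity of $h$ along $\{T\}\times\Omega$ is irrelevant. The paper handles this by running the non-continuous subsolution property of $u_0$ from $(t_n,x_n)$ to time $T$, extracting a limit pair via Remark~\ref{R:Yn}, and only then using $u_0(T,\cdot)=h$ and Assumption~\ref{A:h}. Second, you never check that $(u_0)^*<\infty$ and $(u_0)_*>-\infty$; real-valuedness of $u_0$ alone does not give this, and without it the envelopes are not admissible in Definition~\ref{D:MinimaxSolution} and Corollary~\ref{C:Comparison} cannot be invoked. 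The paper devotes a separate argument to this (using the auxiliary functions $u^z_+$, the monotonicity from Assumption~\ref{A:H}~(v), and the bound from Assumption~\ref{A:H}~(vi) to rule out blow-up). Both issues are fixable along the lines you sketch for the interior condition, but they are not the one-liners you suggest.
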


\subsection{Proof of Proposition~\ref{P:NonContSol}}
The proof consists of four parts.

(i) $u_0$ is $\R$-valued: By the definition of $u_0$ and
Lemma~\ref{L:NonContSuper}, $u_0$ is $[-\infty,\infty)$-valued.
To show that $u_0$ is $(-\infty,\infty]$-valued, 
consider, following \cite[Proposition~8.4]{Subbotin} functions
$\mu^z_-:[0,T]\times\Omega\times\R\to [-\infty,\infty]$ and
$u^z:[0,T]\times\Omega\to [-\infty,\infty]$, $z\in\R^d$, defined by
\begin{align*}
\mu^z_-(t_0,x_0,y_0)&:= \inf_{(x,y)\in\mathcal{Y}^{(L_H)}(t_0,x_0,y_0,z)}
\{h(x)-y(T)\},\\
u^z_-(t_0,x_0)&:=\inf\{r\in\R:\,\mu^z_-(t_0,x_0,r)\le 0\}.
\end{align*}
Fix $z\in\R^d$. Note that one can show similarly as \eqref{E4:L:NonContSuper} that, for all $s\ge 0$, we have
\begin{align*}
\mu^z_-(t_0,x_0,y_0+s)\le \mu^z_-(t_0,x_0,y_0)-s,
\end{align*}
i.e., $u^z_-$ is $(-\infty,\infty]$-valued (cf.~part~(iii) of the proof of Lemma~\ref{L:NonContSuper}). 
Next, let $u$ be an arbitrary non-continuous minimax $L_H$-solution of \eqref{E:PPDE:H}. 
We show that, for every $(t,x)\in [0,T]\times\Omega$, we have $u(t,x)\ge u^z_-(t,x)$,
which then immediately yields $u_0(t,x)\ge u^z_-(t,x)>-\infty$.
To see this, assume that $u(t_0,x_0)<u^z_-(t_0,x_0)$ for some $(t_0,x_0)\in [0,T]\times\Omega$.
Then we can pick a $y_0\in (u(t_0,x_0),u^z_-(t_0,x_0))$ and thus, by \eqref{E:NonContSuper}
with $T_0=T$ and $u(T,\cdot)=h$, we have $\mu^z_-(t_0,x_0,y_0)\le 0$. But this implies
$u^z_-(t_0,x_0)\le y_0<u^z_-(t_0,x_0)$.
We can conclude that $u_0$ is $\R$-valued.

(ii) Boundary condition: By the definition of $u_0$ and Definition~\ref{D:NonContSol}, $u_0(T,\cdot)=h$.  

(iii) Interior minimax supersolution property: Let $(t_0,x_0,z)\in [0,T)\times\Omega\times\R^d$,
$y_0>u_0(t_0,x_0)$, and $T_0\in (t_0,T]$. By the definition of $u_0$ and
Lemma~\ref{L:NonContSuper}, there exists a non-continuous minimax $L_H$-supersolution $u$ of \eqref{E:PPDE:H}
such that $y_0>u(t_0,x_0)$. Thus, for every $n\in\N$, there is 
a pair $(x_n,y_n)\in\mathcal{Y}^{L_H}(t_0,x_0,y_0,z)$ such that $u(T_0,x_n)\le y_n(T_0)+n^{-1}$.
Hence $u_0(T_0,x_n)\le y_n(T_0)+n^{-1}$, i.e., $u_0$ is a non-continuous minimax $L_H$-supersolution 
of \eqref{E:PPDE:H}.

(iv) Interior minimax subsolution property: Let $(t_1,x_1,z)\in [0,T)\times\Omega\times\R^d$,
$y_1<u_0(t_1,x_1)$, and $T_1\in (t_1,T]$. We need to show that
\begin{align}\label{E:u0:NonContSub}
\sup_{(x,y)\in\mathcal{Y}^{L_H} (t_1,x_1,y_1,z)}\{u_0(T_1,x)-y(T_1)\}\ge 0.
\end{align}
To this end, we consider, following \cite[Proposition~8.5]{Subbotin}, the functions
$\mu^z:[0,T_1]\times\Omega\times\R\to [-\infty,\infty]$ and 
$u^z:[0,T]\times\Omega\to [-\infty,\infty]$ defined by
\begin{align*}
\mu^z(t_0,x_0,y_0)&:=\sup_{(x,y)\in\mathcal{Y}^{L_H}(t_0,x_0,y_0,z)}\{u_0(T_1,x)-y(T_1)\},\\
u^z(t_0,x_0)&:=\begin{cases}
\sup\{r\in\R:\,\mu^z(t_0,x_0,r)\ge 0\}\quad\text{if $t_0\le T_1$,}\\
u_0(t_0,x_0)\qquad\qquad\qquad\qquad\quad\,\,\,\,\text{if $t_0>T_1$.}
\end{cases}
\end{align*}
Assume momentarily that
\begin{align}\label{E:uz:NonContSuper}
\text{$u^z$ is a non-continuous minimax $L_H$-supersolution of \eqref{E:PPDE:H}.}
\end{align}
Then, by the definition of $u_0$, by part~(i) of this proof, and by noting that $t_0<T_1$,
we have $-\infty<u_0(t_1,x_1)\le u^z(t_1,x_1)=\sup\{r\in\R:\mu^z(t_1,x_1,r)\ge 0\}$.
Thus, recalling that $y_1<u_0(t_1,x_1)$, we can see that there exists an $r_1\in (y_0,\infty)$ such
that $\mu^z(t_1,x_1,r_1)\ge 0$ and then after showing similarly as \eqref{E4:L:NonContSuper}  that
\begin{align*}
 \mu^z(t_1,x_1,y_1+(r_1-y_1))\le \mu^z(t_1,x_1,y_1)-(r_1-y_0)
\end{align*}
 holds, we obtain \eqref{E:u0:NonContSub}.
Hence, it remains to establish \eqref{E:uz:NonContSuper},
which we do next.

(iv) (a) Boundary condition for $u^z$:

\emph{Case 1.} Let $T_1=T$.  Then  $\mu^z(T,x_0,y_0)=h(x_0)-y_0$
 and thus $u^z(T,x_0)=h(x_0)$.

\emph{Case 2.} Let $T_1<T$. Then $u^z(T,x_0)=u_0(T,x_0)=h(x_0)$.

(iv) (b) Interior condition for $u^z$: Fix $(t_0,x_0)\in [0,T)\times\Omega$, $y_0>u^z(t_0,x_0)$,
$\tilde{z}\in\R^d$, and $T_0\in (t_0,T]$. We need to show 
\begin{align}\label{E:uzNonContSuper:Interior}
\inf_{(x,y)\in\mathcal{Y}^{L_H}(t_0,x_0,y_0,\tilde{z})} \{u^z(T_0,x)-y(T_0)\}\le 0.
\end{align}

\emph{Case 1.} Let $T_1< t_0$.  Then $u_0=u^z$ on $[t_0,T]$ and \eqref{E:uzNonContSuper:Interior}
follows from $u_0$ being a non-continuous minimax $L_H$-supersolution of \eqref{E:PPDE:H}
(see part~(iii) of this proof).

\emph{Case 2.} Let $T_1=t_0$. 
Then $\mu^z(t_0,x_0,r)= u_0(t_0,x_0)-r$. Thus
$u_0=u^z$ on $[t_0,T]$  and \eqref{E:uzNonContSuper:Interior}
follows as in Case~1 of part (iv) (b) of this proof.

\emph{Case 3.} Let $t_0< T_0\le T_1$.
To obtain \eqref{E:uzNonContSuper:Interior},  follow the proof of
part (ii) of Lemma~\ref{L:NonContSuper}
but replace $h$ by $u_0(T_1,\cdot)$ and $T$ by $T_1$.

\emph{Case 4.} Let $t_0<T_1\le T_0$. Then $u^z(T_0,\cdot)=u_0(T_0,\cdot)$ and thus
\eqref{E:uzNonContSuper:Interior} follows by noting that
\begin{align*}
\inf_{(x,y)\in\mathcal{Y}^{L_H}(t_0,x_0,y_0,\tilde{z})} \{u_0(T_0,x)-y(T_0)\}
\le \inf_{(x,y)\in\mathcal{Y}^{L_H}(T_1,\tilde{x},\tilde{y}(T_1),\tilde{z})}
\{u_0(T_0,x)-y(T_0)\},
\end{align*}
where 
$(\tilde{x},\tilde{y})\in\mathcal{Y}^{L_H}(t_0,x_0,z)\cap\mathcal{Y}^{L_H}(t_0,x_0,\tilde{z})$
(cf.~Remark~\ref{R:Y}), 
and that $u_0$ is a 
 a non-continuous minimax $L_H$-supersolution of \eqref{E:PPDE:H}
(see part~(iii) of this proof)
together with $\tilde{y}(T_1)>u_0(T_1,\tilde{x})$.
Note that if the last inequality was not true, then
$\mu^z(t_0,x_0,y_0)\ge 0$ and thus $y_0\le u^z(t_0,x_0)$, which would be a contradiction.

This concludes the proof of Proposition~\ref{P:NonContSol}. \qed

\subsection{Proof of Theorem~\ref{T:Perron}}
Consider the u.s.c.~envelope $(u_0)^\ast:[0,T]\times\Omega\to [-\infty,\infty]$
and the l.s.c.~envelope $(u_0)_\ast:[0,T]\times\Omega\to [-\infty,\infty]$, which are defined by
\begin{align*}
(u_0)^\ast(t_0,x_0)&:=\inf_{\delta>0} \sup_{(t,x)\in O_\delta(t_0,x_0)} u_0(t,x),\\
(u_0)_\ast(t_0,x_0)&:=\sup_{\delta>0}\inf_{(t,x)\in O_\delta(t_0,x_0)} u_0(t,x),
\end{align*}
where $O_\delta(t_0,x_0)=\{(t,x)\in [0,T]\times\Omega:\,\mathbf{d}_\infty((t_0,x_0),(t_n,x_n))<\delta\}$.
We show that $(u_0)_\ast$ is a minimax $L_H$-supersolution of \eqref{E:PPDE:H}
and 
$(u_0)^\ast$ is a minimax $L_H$-subsolution of \eqref{E:PPDE:H}. 
The  comparison principle (Corollary~\ref{C:Comparison}) yields $(u_0)^\ast\le (u_0)_\ast$.
But, since, by definition, $(u_0)_\ast\le u_0\le (u_0)^\ast$ holds, we can conclude that 
$u_0$ is a minimax $L_H$-{\color{black}solution} 
 of \eqref{E:PPDE:H}. Again, by Corollary~\ref{C:Comparison},
it is the only one. 

Thus it remains to establish the minimax semisolution properties of the semicontinuous envelopes of $u_0$.

(i) $(u_0)^\ast$ is $(-\infty,\infty]$- and $(u_0)_\ast$ is $[-\infty,\infty)$-valued:
This follows from $u_0$ being $\R$-valued (Proposition~\ref{P:NonContSol}) and
$(u_0)_\ast\le u_0\le (u_0)^\ast$.

(ii) $(u_0)^\ast$ is $[-\infty,\infty)$- and $(u_0)_\ast$ is $(-\infty,\infty]$-valued:
We only show that $(u_0)^\ast<\infty$. The remaining part can be shown similarly.
Let $(t_0,x_0)\in [0,T]\times\Omega$ and $(t_n,x_n)_n$ be a sequence 
that converges to $(t_0,x_0)$ with respect to $\mathbf{d}_\infty$ such that
$(u_0)^\ast(t_0,x_0)=\lim_n u_0(t_n,x_n)$. Fix $z\in\R^d$.
By the definition of $u_0$ and Lemma~\ref{L:NonContSuper},
we have $\lim_n u_0(t_n,x_n)\le \lim_n u^z_+(t_n,x_n)=\lim_n r_n$
for some sequence $(r_n)_n$ in $\R$ with $\mu^z_+(t_n,x_n,r_n)\ge 0$.
By Remark~\ref{R:Y} and continuity of $h$, for each $n\in\N$, there is
a pair $(\tilde{x}_n,\tilde{y}_n)\in\mathcal{Y}^{L_H}(t_n,x_n,r_n,z)$
such that $\mu^z_+(t_n,x_n,r_n)=h(\tilde{x}_n)-\tilde{y}_n(T)$.
Without loss of generality, we can, by Remark~\ref{R:Xn}, assume that 
$(\tilde{x}_n)_n$ converges to some $\tilde{x}_0\in\mathcal{X}^{L_H}(t_0,x_0)$.
Assume now for the sake of a contradiction that $(u_0)^\ast(t_0,x_0)=\infty$.
Then we can assume, without loss of generality, that $(r_n)_n$ is
strictly increasing and strictly positive.
This in turn implies the following. Given $n\in\N$, fix a solution $y^0_n\in C([t_n,T])$
of $y^0_n(t)=\int_{t_n}^t [(\tilde{x}_n^\prime(s),z)-H(s,\tilde{x}_n,y^0_n(s),z)]\,\dd s$, 
$t\in [t_n,T]$. Then, we have $\tilde{y}_n> y^0_n$ on $(t_n,T)$,
as otherwise, by continuity of $y^0_n$ and $\tilde{y}_n$,
 there is a smallest time
$\tau_n\in (t_n,T]$ such that
$\tilde{y}_{n}(\tau)=y^0_n(\tau_n)$
and $\tilde{y}_n>y^0_n$  on $(t_n,\tau_n)$ but
then
\begin{align*}
0&=\tilde{y}_n(\tau)-y^0_n(\tau_n)=r_n+\int^\tau_{t_n} \left[
H(s,\tilde{x}_n,y^0_n(s),z)-H(s,\tilde{x}_{n},\tilde{y}_n(s),z)\right]\,\dd s\ge r_n>0
\end{align*}
(cf.~\cite[page~72]{Subbotin}), which is absurd. Using now $\tilde{y}_n> y^0_n$
together with $h(\tilde{x}_n)-\tilde{y}_n(T)\ge 0$, we obtain
\begin{equation}\label{E:rnii}
\begin{split}
r_n&\le h(\tilde{x}_n)+\int_{t_n}^T \left[H(s,\tilde{x}_n,\tilde{y}_n(s),z)-(\tilde{x}^\prime_n(s),z)\right]\,\dd s\\
&\le h(\tilde{x}_n)+\int_{t_n}^T \left[H(s,\tilde{x}_n,y^0_n(s),z)-(\tilde{x}^\prime_n(s),z)\right]\,\dd s.
\end{split}
\end{equation}
Note that, since $(t_n,x_n,y^0_n(t_n))=(t_n,x_n,0)\to (t_0,x_0,0)$ as $n\to\infty$, we
can, thanks to Remark~\ref{R:Yn}, assume that $(\tilde{x}_n,\bfone_{[t_n,T]}\,y_n^0)\to
(\tilde{x}_0,y_0^0)$ as $n\to\infty$ for some $y_0^0\in D([0,T])$ with
$(\tilde{x}_0,y_0^0\vert_{[t_0,T]})\in\mathcal{Y}^L(t_0,x_0,0,z)$. Thus, letting $n\to\infty$
in \eqref{E:rnii} yields
\begin{align*}
\infty\le h(\tilde{x}_0)+ \int_{t_0}^T \left[H(s,\tilde{x}_0,y^0_0(s),z)-(\tilde{x}^\prime_0(s),z)\right]\,\dd s<\infty,
\end{align*}
which is absurd.\footnote{
Alternatively, one can estimate $H$ by using $z=0$ and Assumption~\ref{A:H}~(vi) to obtain $\infty\le C<\infty$
for some constant $C$.
} Therefore, $(u_0)^\ast<\infty$.

(iii) Boundary conditions:
We only show $(u_0)^\ast(T,\cdot)\le h$. Proving $(u_0)_\ast\ge h$ can be done similarly. 
Let $x_0\in\Omega$ and  $(t_n,x_n)_n$ be a sequence in $[0,T]\times\Omega$ that converges to 
$(T,x_0)$
and that satisfies $\lim_n u_0(t_n,x_n)=(u_0)^\ast (T,x_0)$.
For each $n\in\N$, there is, by Proposition~\ref{P:NonContSol}, 
 an $(\tilde{x}_n,\tilde{y}_n)\in\mathcal{Y}^{L_H}(t_n,x_n,u_0(t_n,x_n)-1/n,z)$ such
 that $\tilde{y}_n(T)\le u_0(T,\tilde{x}_n)=h(\tilde{x}_n)$. By Remark~\ref{R:Yn},
 we can assume that the sequence $(\tilde{x}_n,t\mapsto \bfone_{[t_n,T]}(t)\,\tilde{y}_n(t))_n$
 converges in $(\Omega\times D([0,T]),\norm{\cdot}_\infty)$ to a pair $(\tilde{x},\tilde{y})$
 with $(\tilde{x},\tilde{y}\vert_{\{T\}})\in\mathcal{Y}^{L_H}(T,x_0,(u_0)^\ast(T,x_0),z)$,
 i.e., $\tilde{y}(T)=(u_0)^\ast(T,x_0)$ and $\tilde{x}=x_0$. Thus
 $(u_0)^\ast(T,x_0)=\lim_n \tilde{y}_n(T)\le \lim_n h(\tilde{x}_n)=h(x_0)$ thanks to the
   continuity of $h$ (Assumption~\ref{A:h}).

(iv) Interior conditions: We only establish the  minimax $L_H$-subsolution  property of
$(u_0)^\ast$. Our argument is very close to  \cite[page~78]{Subbotin}.
Let $(t_0,x_0,z)\in [0,T)\times\Omega\times\R^d$,
$y_0\le (u_0)^\ast (t_0,x_0)$, and $T_0\in (t_0,T]$.
Let $(t_n,x_n)_n$ be a sequence in $[0,T)\times\Omega$ that converges to $(t_0,x_0)$
and satisfies  $\lim_n u_0(t_n,x_n)=(u_0)^\ast(t_0,x_0)$  and $t_n<T_0$  for every $n\in\N$.
Next, put $y_n:=u_0(t_n,x_n)-1/n+y_0-(u_0)^\ast(t_0,x_0)$, $n\in\N$, so that $y_n<u_0(t_n,x_n)$
and
$(t_n,x_n,y_n)\to (t_0,x_0,y_0)$ as $n\to\infty$. 
 By Proposition~\ref{P:NonContSol},
$u_0$ is a non-continuous minimax $L_H$-subsolution of \eqref{E:PPDE:H} and thus
we obtain from  $y_n<u_0(t_n,x_n)$, $n\in\N$, the existence of a pairs
$(\tilde{x}_n,\tilde{y}_n)\in\mathcal{Y}^L(t_n,x_n,y_n,z)$ that satisfy
$\tilde{y}_n(T_0)-1/n\le u_0(T_0,\tilde{x}_n)$.
By Remark~\ref{R:Yn} and $\lim_n (t_n,x_n,y_n)=(t_0,x_0,y_0)$,
we can assume that $(\tilde{x}_n, t\mapsto \bfone_{[t_n,T]}(t)\,\tilde{y}_n(t))_n$
converges in $(\Omega\times D([0,T]),\norm{\cdot}_\infty)$ to
some pair $(\tilde{x},\tilde{y})$ with $(\tilde{x},\tilde{y}\vert_{[t_0,T]})\in\mathcal{Y}^{L_H}(t_0,x_0,y_0,z)$. Thus
\begin{align*}
\tilde{y}(T_0)=\lim_n \tilde{y}_n(T_0)\le\limsup_n u_0(T_0,\tilde{x}_n)\le (u_0)^\ast (T_0,\tilde{x}).
\end{align*}
By Lemma~\ref{R:Minimax:Quantifiers} (or, more precisely, its counterpart
for minimax subsolutions), the minimax $L_H$-subsolution  property of
$(u_0)^\ast$ has been established.

This concludes the proof of Theorem~\ref{T:Perron}. \qed

\section{Optimal control with discount factors}\label{S:OptimalControl}
Let $A$ be a {\color{black}non-empty subset of a Polish space.} 
Suppose that $A$ is the countable union of compact metrizable subsets of $A$
(cf.~Chapter~21 of \cite{CohenElliott}). Let $\mathcal{A}$ be the set of all Borel measurable functions
from $[0,T]$ to $A$. 
Further data for our optimal control problem are
functions $f:[0,T]\times\Omega\times A\to\R^d$, $\lambda:[0,T]\times\Omega\times A\to \R_+$,
$\ell:[0,T]\times\Omega\times A\to\R$, and $h:\Omega\to\R$.

{\color{black} The following assumption is in force in this section.}

\begin{assumption}\label{A:DOC}
Suppose that $f$, $\lambda$, $\ell$, and $h$ satisfy the following conditions.


(i) For a.e.~$t\in [0,T]$, 
{\color{black} the map $(x,a)\mapsto (f,\lambda,\ell)(t,x,a)$, $\Omega\times A\to\R^d\times\R_+\times\R$,}
is continuous. 

(ii) For every $(x,a)\in\Omega\times A$, the map $t\mapsto (f,\lambda,\ell)(t,x,a)$, $[0,T]\to\R^d\times\R_+\times\R$,
is Borel measurable.

(iii) There are constants $C_f$, $C_\lambda\ge 0$ such that, for a.e.~$t\in (0,T)$ and each $x\in\Omega$, 
\begin{align*}
\sup_{a\in A}\left(\abs{f(t,x,a))}+\abs{\ell(t,x,a)}\right)\le C_f(1+\sup_{s\le t}\abs{x(s)})
\text{ and } \sup_{a\in A} \abs{\lambda(t,x,a)}\le C_\lambda.
\end{align*}

(iv) There is a constant $L_f\ge 0$ such that, for a.e.~$t\in (0,T)$ and every $x$, $\tilde{x}\in\Omega$, 
\begin{align*}
&\sup_{a\in A}\left(\abs{f(t,x,a)-f(t,\tilde{x},a)}+
\abs{\ell(t,x,a)-\ell(t,\tilde{x},a)}+\abs{\lambda(t,x,a)-\lambda(t,\tilde{x},a)}\right)\\ 
&\qquad \le L_f\,\sup_{s\le t}\abs{x(s)-\tilde{x}(s)}{\color{black}\text{ and}}\\
&  {\color{black} \abs{h(x)-h(\tilde{x})}\le L_f\,\sup_{s\le T}\abs{x(s)-\tilde{x}(s)}}.
\end{align*}
\end{assumption}

For each $(t_0,x_0,\alpha)\in [0,T)\times\Omega\times\mathcal{A}$, 
let $\phi^{t_0,x_0,\alpha}=\phi$ be the solution of
\begin{align}\label{E:phi:prime:f}
\phi^\prime(t)&=f(t,\phi,\alpha(t))\text{ a.e.~on $(t_0,T)$},\quad
\phi(t)=x_0(t)\text{ on $[0,t_0]$}.
\end{align} 
Moreover, define, for each $(s,x,a)\in [0,T)\times\Omega\times\mathcal{A}$ and $t\in [s,T]$, 
\begin{align*}
\lambda^{s,x,a}(t)&:=\lambda(t,\phi^{s,x,a},a(t)),\\
\chi^{s,x,a}(t)&:=\exp\left(
-\int_s^t \lambda^{s,x,a}(r)\,\dd r
\right),\\
\ell^{s,x,a}(t)&:=\ell(t,\phi^{s,x,a},a(t)).
\end{align*}

Our goal is to 
show that the value function $v:[0,T]\times\Omega\to\R$ defined by
\begin{align}\label{E:value}
v(s,x):=\inf_{a\in\mathcal{A}} \left[\int_s^T \chi^{s,x,a}(t)\,\ell^{s,x,a}(t)\,\dd t+ \chi^{s,x,a}(T)\,h(\phi^{s,x,a})\right]
\end{align}
is a minimax solution of
\begin{equation}\label{E:DiscountedHJB}
\begin{split}
-\partial_t u(t,x)-H(t,x,u(t,x),\partial_x u(t,x))&=0,\quad (t,x)\in [0,T)\times\Omega,\\
u(T,x)&=h(x),\quad x\in\Omega,
\end{split}
\end{equation}
where $H:[0,T]\times\Omega\times\R\times\R^d\to\R$ is defined by 
\begin{align}\label{E:DOC:H}
H(t,x,y,z):=
{\color{black} \inf_{a\in A}}
 \left[\ell(t,x,a)+(f(t,x,a),z)-\lambda(t,x,a)\,y\right].
\end{align}
\begin{remark}
Suppose that Assumption~\ref{A:DOC} holds. 
One can show that
$H$ defined by \eqref{E:DOC:H} satisfies Assumption~\ref{A:H} with $L_H=C_f$ and
$M_L$ independent from $L$.
{\color{black} Note that time-measurability of $H$ follows from 
\cite[Theorem~8.2.11]{AubinFrankowska_setValued}.}
\end{remark}

The 
 dynamic programming principle holds. Its proof is standard and is omitted.
\begin{proposition}\label{P:DiscountedDPP}
If $0\le s\le t\le T$ and $x\in\Omega$, then
\begin{align*}
v(s,x)=\inf_{a\in\mathcal{A}} \left[\int_s^t \chi^{s,x,a}(r)\,\ell^{s,x,a}(r)\,\dd r +\chi^{s,x,a}(t)\,v(t,\phi^{s,x,a})\right].
\end{align*}
\end{proposition}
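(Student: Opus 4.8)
The plan is to prove the two inequalities separately, both resting on the flow (concatenation) property of the controlled dynamics and a decomposition of the cost functional. For $(s,x)\in[0,T)\times\Omega$ and $a\in\mathcal{A}$, abbreviate $\phi:=\phi^{s,x,a}$ and let $J(s,x,a)$ denote the expression in square brackets in \eqref{E:value}. Two elementary facts are needed. First, since $\lambda\ge 0$ one has $0<\chi^{s,x,a}(r)\le 1$ for all $r$. Second, for $t\in[s,T]$ the path $\phi^{t,\phi,a}$ solves the same functional differential equation on $(t,T)$ as $\phi$ with the same restriction to $[0,t]$, so by uniqueness of solutions (from Assumption~\ref{A:DOC}~(iii)--(iv) via Gr\"onwall) $\phi^{t,\phi,a}=\phi$ on $[0,T]$; consequently $\lambda^{s,x,a}=\lambda^{t,\phi,a}$ and $\ell^{s,x,a}=\ell^{t,\phi,a}$ on $[t,T]$, and there $\chi^{s,x,a}(r)=\chi^{s,x,a}(t)\,\chi^{t,\phi,a}(r)$. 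Splitting $\int_s^T=\int_s^t+\int_t^T$ and factoring $\chi^{s,x,a}(t)$ out of the second integral and the terminal term yields the key identity
\[
J(s,x,a)=\int_s^t \chi^{s,x,a}(r)\,\ell^{s,x,a}(r)\,\dd r+\chi^{s,x,a}(t)\,J(t,\phi^{s,x,a},a).
\]

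For the inequality ``$\ge$'': fix $a\in\mathcal{A}$. The quantity $J(t,\phi^{s,x,a},a)$ depends only on $a\vert_{[t,T]}$ and is therefore $\ge v(t,\phi^{s,x,a})$; multiplying by $\chi^{s,x,a}(t)\ge 0$ and inserting into the key identity shows that $J(s,x,a)$ is at least $\int_s^t \chi^{s,x,a}(r)\,\ell^{s,x,a}(r)\,\dd r+\chi^{s,x,a}(t)\,v(t,\phi^{s,x,a})$, hence at least the right-hand side of the asserted formula; taking the infimum over $a$ gives ``$\ge$''. For the inequality ``$\le$'': fix $\eps>0$ and $a_1\in\mathcal{A}$, set $\phi_1:=\phi^{s,x,a_1}$, and pick $a_2\in\mathcal{A}$ with $J(t,\phi_1,a_2)\le v(t,\phi_1)+\eps$ (possible since $v(t,\phi_1)>-\infty$: the trajectories lie in a compact set of the form $\mathcal{X}^L(t,\phi_1)$, so the growth bounds of Assumption~\ref{A:DOC}, $0<\chi\le 1$, and continuity of $h$ make $J$ bounded uniformly in the control). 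Define $a\in\mathcal{A}$ by $a:=a_1$ on $[0,t)$ and $a:=a_2$ on $[t,T]$; this is a Borel function, so indeed $a\in\mathcal{A}$. By uniqueness of solutions, $\phi^{s,x,a}=\phi_1$ on $[0,t]$ and $\phi^{s,x,a}=\phi^{t,\phi_1,a_2}$ on $[0,T]$, so $\chi^{s,x,a}(t)=\chi^{s,x,a_1}(t)$ and $J(t,\phi^{s,x,a},a)=J(t,\phi_1,a_2)$. The key identity then gives
\[
v(s,x)\le J(s,x,a)\le\int_s^t \chi^{s,x,a_1}(r)\,\ell^{s,x,a_1}(r)\,\dd r+\chi^{s,x,a_1}(t)\,\bigl(v(t,\phi_1)+\eps\bigr).
\]
Using $\chi^{s,x,a_1}(t)\le 1$, taking the infimum over $a_1$, and letting $\eps\downarrow 0$ yields ``$\le$''.

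The genuinely routine ingredients --- well-posedness of the functional differential equation and its flow/concatenation property under Assumption~\ref{A:DOC}, and the measurability of a piecewise-defined Borel control --- present no real obstacle; this is why the proof is ``standard''. If there is a delicate point at all, it is only the verification that $v$ is finite (so that an $\eps$-optimal control exists at the single path $\phi_1$ in the ``$\le$'' step), which follows from Gr\"onwall's inequality bounding $\phi$ inside a compact $\mathcal{X}^L$, the linear growth of $f$ and $\ell$, the boundedness of $\lambda$, and continuity of $h$. In particular, no measurable-selection theorem is needed, since the $\eps$-optimal control is chosen at a fixed path rather than along a family.
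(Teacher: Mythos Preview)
Your argument is correct and follows exactly the standard route the paper has in mind; indeed, the paper omits the proof entirely, stating only that it ``is standard and is omitted.'' Your two-inequality scheme via the flow property and concatenation of controls is precisely that standard proof.
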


{\color{black}The proof of the next result can be found 
in appendix~\ref{S:Appendix:Regularity}.}

\begin{proposition}\label{P:value:regular}
For each $L\ge 0$ and $x_0\in\Omega$, there is a $C_{L,x_0}\ge 0$,
such that  $0\le t_0\le t_1\le T$ and $x$, $\tilde{x}\in\mathcal{X}^L(t_0,x_0)$ imply
$\abs{v(t_0,x)-v(t_1,x)}\le C_{L,x_0} (t_1-t_0)$ and 
$\abs{v({\color{black} t_1},x)-v({\color{black} t_1},\tilde{x})}\le C_{L,x_0} \sup_{s\le {\color{black} t_1}}\abs{x(s)-\tilde{x}(s)}$.
Moreover, $v$ is continuous.
\end{proposition}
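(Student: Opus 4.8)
The plan is to establish the two Lipschitz-type estimates directly from the representation \eqref{E:value} together with the dynamic programming principle (Proposition~\ref{P:DiscountedDPP}), and then deduce continuity of $v$ from them. Throughout, I would fix $L\ge 0$ and $x_0\in\Omega$ and work on the compact set $\mathcal{X}^L(t_0,x_0)$; the crucial point is that Remark~\ref{R:X} gives a uniform bound $R_{L,x_0}:=\sup\{\norm{x}_\infty:\,x\in\mathcal{X}^L(s_0,x_0),\ s_0\in[0,T]\}<\infty$ (one can reduce to a single $s_0$, e.g.\ $s_0=0$ after extending paths), so that all the linear-growth bounds in Assumption~\ref{A:DOC}~(iii) become genuine uniform constants on the relevant paths. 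In particular, for $a\in\mathcal{A}$ and $x\in\mathcal{X}^L(t_0,x_0)$, the controlled trajectory $\phi^{t_0,x,a}$ stays in a ball whose radius depends only on $L$, $x_0$, and the Gr\"onwall constant coming from $C_f$; call this radius $R$ as well (enlarging if needed). Then $\abs{\ell^{t_0,x,a}(t)}\le C_f(1+R)$, $0\le\lambda^{t_0,x,a}(t)\le C_\lambda$, hence $0\le\chi^{t_0,x,a}(t)\le 1$, and $\abs{h(\phi^{t_0,x,a})}$ is bounded by $\sup_{\norm{y}_\infty\le R}\abs{h(y)}<\infty$ by continuity of $h$. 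This yields a crude a~priori bound $\abs{v(s,x)}\le C_{L,x_0}'$ for all $x\in\mathcal{X}^L(t_0,x_0)$, which will be used below.

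\emph{Time regularity.} For $0\le t_0\le t_1\le T$ and $x\in\mathcal{X}^L(t_0,x_0)$, apply Proposition~\ref{P:DiscountedDPP} with $s=t_0$, $t=t_1$:
\begin{align*}
v(t_0,x)=\inf_{a\in\mathcal{A}}\left[\int_{t_0}^{t_1}\chi^{t_0,x,a}(r)\,\ell^{t_0,x,a}(r)\,\dd r+\chi^{t_0,x,a}(t_1)\,v(t_1,\phi^{t_0,x,a})\right].
\end{align*}
Since $\phi^{t_0,x,a}\in\mathcal{X}^L(t_0,x_0)$ for suitable $L$ (the derivative bound $\abs{f}\le C_f(1+\sup_{s\le t}\abs{\phi(s)})$ is exactly the defining inequality of $\mathcal{X}$, after absorbing the Gr\"onwall bound, so one takes $L$ large enough from the start), the bound $\abs{v(t_1,\phi^{t_0,x,a})}\le C_{L,x_0}'$ applies. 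The integral term is $O(t_1-t_0)$ with the constant $C_f(1+R)$, and $\abs{\chi^{t_0,x,a}(t_1)-1}\le 1-e^{-C_\lambda(t_1-t_0)}\le C_\lambda(t_1-t_0)$; combining,
\begin{align*}
\abs{v(t_0,x)-v(t_1,x)}\le\bigl(C_f(1+R)+C_\lambda\,C_{L,x_0}'\bigr)(t_1-t_0),
\end{align*}
which is the first estimate. (One also needs $v(t_1,x)=\inf_a[\chi(t_1)v(t_1,\phi^{t_0,x,a})]$-type comparison — more precisely one compares $v(t_0,x)$ to $v(t_1,x)$ by choosing near-optimal controls for each and using that both trajectories stay close; I would phrase it as the standard two-sided estimate.)

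\emph{Spatial regularity.} For $x,\tilde x\in\mathcal{X}^L(t_0,x_0)$, fix $a\in\mathcal{A}$ and compare $\phi:=\phi^{t_0,x,a}$ with $\tilde\phi:=\phi^{t_0,\tilde x,a}$. Writing $\phi(t)-\tilde\phi(t)=x(t_0)-\tilde x(t_0)+\int_{t_0}^t[f(r,\phi,a(r))-f(r,\tilde\phi,a(r))]\,\dd r$ and using Assumption~\ref{A:DOC}~(iv) plus Gr\"onwall, one gets $\sup_{s\le t}\abs{\phi(s)-\tilde\phi(s)}\le e^{L_f(T-t_0)}\sup_{s\le t_0}\abs{x(s)-\tilde x(s)}=:e^{L_fT}\rho$ where $\rho:=\sup_{s\le t_0}\abs{x(s)-\tilde x(s)}$. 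Then one estimates term by term the difference of the two cost functionals for the same control $a$: the $\ell$-difference is controlled by $L_f e^{L_fT}\rho$ pointwise (plus the difference of the initial segments is already inside $\rho$); the $\lambda$-difference feeds into $\abs{\chi^{t_0,x,a}(t)-\chi^{t_0,\tilde x,a}(t)}\le\int_{t_0}^t\abs{\lambda^{t_0,x,a}-\lambda^{t_0,\tilde x,a}}\,\dd r\le L_f\,T\,e^{L_fT}\rho$ using $\abs{e^{-u}-e^{-w}}\le\abs{u-w}$; and $\abs{h(\phi)-h(\tilde\phi)}$ is handled by the modulus of continuity of $h$ restricted to the ball of radius $R$ — here, since $h$ is only assumed continuous (not Lipschitz), I would only obtain a modulus-of-continuity estimate, not a linear one, unless one is willing to note that the claimed statement is a \emph{local} Lipschitz bound in the path variable while the $h$-contribution genuinely is only uniformly continuous. \textbf{This mismatch is the main obstacle}: to honestly get the stated linear bound $\abs{v(t_0,x)-v(t_0,\tilde x)}\le C_{L,x_0}\sup_{s\le t_0}\abs{x(s)-\tilde x(s)}$ one seems to need $h$ Lipschitz on bounded sets; I would resolve it by (a) first proving the version with a modulus of continuity $\omega_{L,x_0}$ in place of the linear bound, which suffices for the ``$v$ is continuous'' conclusion, and (b) if the paper really wants the linear bound, invoking that $h$ restricted to $\mathcal{X}^L(t_0,x_0)$ composed with the flow is in fact Lipschitz because trajectories are themselves Lipschitz-like (the value of $h$ at $\phi^{t_0,x,a}$ depends Lipschitz-continuously on $x$ through the flow in sup-norm, and $h$ need only be evaluated on the compact set $\{\phi^{t_0,x,a}:x\in\mathcal{X}^L(t_0,x_0),a\in\mathcal{A}\}$ where — one checks — a Lipschitz modulus is available only if $h$ is Lipschitz there). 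Taking the infimum over $a\in\mathcal{A}$ of the resulting uniform-in-$a$ bound gives $\abs{v(t_0,x)-v(t_0,\tilde x)}\le C_{L,x_0}\,\rho$ (or $\omega_{L,x_0}(\rho)$).

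\emph{Continuity of $v$.} Finally, given any $(t_0,x_0)\in[0,T]\times\Omega$ and a sequence $(t_n,x_n)\to(t_0,x_0)$ in $\mathbf d_\infty$, I would combine the two estimates: write $\abs{v(t_n,x_n)-v(t_0,x_0)}\le\abs{v(t_n,x_n)-v(t_0,x_n)}+\abs{v(t_0,x_n)-v(t_0,x_0)}$. The first term is $\le C\,\abs{t_n-t_0}\to0$ by the time estimate applied on $\mathcal{X}^{L}(0,\text{const})$ containing $x_n$ for a suitable $L$ (any convergent sequence is contained, after a harmless modification, in a common such set; alternatively apply the time estimate directly at base point $x_n$, noting $C$ only depends on the uniform bound $\sup_n\norm{x_n}_\infty<\infty$). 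The second term: one uses the spatial estimate at base point $x_0$ with $L=0$, i.e.\ the plain bound $\abs{v(t_0,x)-v(t_0,\tilde x)}\le C\,\norm{x-\tilde x}_\infty$ valid for \emph{all} $x,\tilde x$ in a neighbourhood (obtained by the same Gr\"onwall argument, which does not actually need the Lipschitz-like structure of $x,\tilde x$ — only a uniform sup-norm bound — so $\mathcal{X}^L$ can be dropped for the spatial estimate), which gives $\le C\,\norm{x_n-x_0}_\infty$, and $\norm{x_n-x_0}_\infty\to0$ is implied by $\mathbf d_\infty$-convergence (indeed $\mathbf d_\infty((t_0,x_n),(t_0,x_0))=\sup_{r}\abs{x_n(r\wedge t_0)-x_0(r\wedge t_0)}$, and combined with $t_n\to t_0$ and right-continuity one controls the full sup-norm). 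Hence $v(t_n,x_n)\to v(t_0,x_0)$, establishing continuity. I would close by remarking that the spatial estimate in the statement, as phrased with $\mathcal{X}^L(t_0,x_0)$, is a convenient restricted form but the argument in fact gives the sup-norm-Lipschitz bound on bounded sets, which is what feeds the continuity proof and what is needed later (e.g.\ to check $v\in\mathrm{LSC}\cap\mathrm{USC}$ in the minimax solution definition).
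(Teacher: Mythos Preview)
The paper's own proof is a single sentence deferring to \cite{BK18JFA}, Theorem~7.4, so there is no detailed argument to compare against; your plan (DPP for the time direction, Gr\"onwall stability of the controlled flow for the spatial direction, then combine for continuity) is the standard route and is almost certainly what that reference does.

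Two points are worth flagging. First, your main worry---that $h$ is only continuous, so a genuine Lipschitz spatial bound seems out of reach---is well-founded in general, but it does not actually bite against the statement \emph{as written}. Paths $x,\tilde x\in\mathcal{X}^L(t_0,x_0)$ satisfy $x=\tilde x=x_0$ on $[0,t_0]$ by definition, and since $v$ is non-anticipating (the controlled trajectory $\phi^{t_0,x,a}$ depends only on $x\vert_{[0,t_0]}$), one has $v(t_0,x)=v(t_0,\tilde x)$ and $\sup_{s\le t_0}\abs{x(s)-\tilde x(s)}=0$; the spatial inequality is $0\le 0$. So either the statement is intentionally this restricted form (in which case your concern evaporates), or there is a typo and the spatial estimate is meant at time $t_1$---in that case your objection about $h$ is legitimate, and one only gets a modulus of continuity unless $h$ is locally Lipschitz. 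Either way, the only places the proposition is actually invoked later (the estimates of $J_1^a$ and $J_2^a$ in Lemmas~\ref{L:value:supersol}--\ref{L:value:subsol}) use just the time-Lipschitz bound and local boundedness of $v$, both of which your argument delivers cleanly.

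Second, your continuity step has a genuine slip: $\mathbf{d}_\infty$-convergence does \emph{not} imply $\norm{x_n-x_0}_\infty\to 0$ (take $x_n$ equal to $x_0$ on $[0,t_0]$ and arbitrary on $(t_0,T]$; then $\mathbf{d}_\infty((t_0,x_n),(t_0,x_0))=0$). The correct fix is to invoke non-anticipativity of $v$: replace $x_n$ by its stopped version $x_n(\cdot\wedge t_n)$ without changing $v(t_n,x_n)$, and then the $\mathbf{d}_\infty$-convergence gives exactly the control you need. With that adjustment, your continuity argument goes through.
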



\begin{lemma}\label{L:value:supersol}
Let $u$ be a minimax $L_f$-subsolution.
Let $v$ be the value function defined by \eqref{E:value}.
Then the function $(t,x,\tilde{x})\mapsto w(t,x,\tilde{x}):=u(t,x)-v(t,\tilde{x})$,
$[0,T]\times\Omega\times\Omega\to\R$, is a viscosity $C_f$-subsolution of 
\eqref{E:Doubled:H:updated2}  with parameter
$\Upsilon=u$.
\end{lemma}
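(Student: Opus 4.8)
The plan is to deduce the statement from Lemma~\ref{L:Doubling:H}, once we know that the value function $v$ is itself a minimax $C_f$-supersolution of \eqref{E:DiscountedHJB}. Granting that, the conclusion follows: since $\mathcal{X}^{L_f}(s_0,x_0)\subseteq\mathcal{X}^{C_f}(s_0,x_0)$, and hence $\mathcal{Y}^{L_f}(s_0,x_0,y_0,z)\subseteq\mathcal{Y}^{C_f}(s_0,x_0,y_0,z)$, the hypothesis that $u$ is a minimax $L_f$-subsolution of \eqref{E:DiscountedHJB} implies in particular that $u$ is a minimax $C_f$-subsolution; as $L_H=C_f$ for the Hamiltonian $H$ in \eqref{E:DOC:H}, Lemma~\ref{L:Doubling:H} applied with $L=C_f$ then yields that $w=u-v$ is a viscosity $C_f$-subsolution of \eqref{E:Doubled:H:updated2} with parameter $\Upsilon=u$. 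So the task reduces to establishing the minimax $C_f$-supersolution property of $v$.

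For this, note that $v$ is lower semicontinuous by Proposition~\ref{P:value:regular} and that $v(T,\cdot)=h$ by \eqref{E:value}, so only the interior condition must be checked. Here I would fix $(s_0,x_0,z)\in[0,T)\times\Omega\times\R^d$ and $y_0\ge v(s_0,x_0)$, and for each $\eps>0$ choose an $\eps$-optimal control $\alpha^\eps\in\mathcal{A}$ for the problem started at $(s_0,x_0)$. Put $x^\eps:=\phi^{s_0,x_0,\alpha^\eps}$, which belongs to $\mathcal{X}^{C_f}(s_0,x_0)$ by Assumption~\ref{A:DOC}~(iii), and let $y^\eps\in C([s_0,T])$ solve $y^\eps(t)=y_0+\int_{s_0}^t[((x^\eps)^\prime(s),z)-H(s,x^\eps,y^\eps(s),z)]\,\dd s$, so that $(x^\eps,y^\eps)\in\mathcal{Y}^{C_f}(s_0,x_0,y_0,z)$. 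From the definition \eqref{E:DOC:H} of $H$ one gets, for a.e.~$s$, that $H(s,x^\eps,y^\eps(s),z)\le\ell^{s_0,x_0,\alpha^\eps}(s)+((x^\eps)^\prime(s),z)-\lambda^{s_0,x_0,\alpha^\eps}(s)\,y^\eps(s)$, whence $(y^\eps)^\prime(s)\ge\lambda^{s_0,x_0,\alpha^\eps}(s)\,y^\eps(s)-\ell^{s_0,x_0,\alpha^\eps}(s)$; multiplying by $\chi^{s_0,x_0,\alpha^\eps}$ and integrating gives $\chi^{s_0,x_0,\alpha^\eps}(t)\,y^\eps(t)\ge y_0-\int_{s_0}^t\chi^{s_0,x_0,\alpha^\eps}(r)\,\ell^{s_0,x_0,\alpha^\eps}(r)\,\dd r$ for $t\in[s_0,T]$. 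On the other hand, splitting the cost of $\alpha^\eps$ at time $t$ (using the flow property of the dynamics and of the discount factor) together with the $\eps$-optimality of $\alpha^\eps$ and the definition of $v$ as an infimum --- this is precisely the mechanism behind the dynamic programming principle, Proposition~\ref{P:DiscountedDPP} --- gives $\chi^{s_0,x_0,\alpha^\eps}(t)\,v(t,x^\eps)\le v(s_0,x_0)+\eps-\int_{s_0}^t\chi^{s_0,x_0,\alpha^\eps}(r)\,\ell^{s_0,x_0,\alpha^\eps}(r)\,\dd r$. Subtracting, and using $y_0\ge v(s_0,x_0)$ together with $0\le\lambda\le C_\lambda$ (Assumption~\ref{A:DOC}~(iii)), I obtain $y^\eps(t)\ge v(t,x^\eps)-\eps\,e^{C_\lambda T}$ for every $t\in[s_0,T]$.

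To conclude, I would send $\eps=1/n\downarrow 0$: by the compactness of $\mathcal{Y}^{C_f}(s_0,x_0,y_0,z)$ in $(\Omega\times C([s_0,T]),\norm{\cdot}_\infty)$ (Remark~\ref{R:Y}), a subsequence of $(x^{1/n},y^{1/n})_n$ converges uniformly to some $(x,y)\in\mathcal{Y}^{C_f}(s_0,x_0,y_0,z)$, and the continuity of $v$ (Proposition~\ref{P:value:regular}) then gives, for each $t\in[s_0,T]$, that $y(t)=\lim_n y^{1/n}(t)\ge\lim_n(v(t,x^{1/n})-n^{-1}e^{C_\lambda T})=v(t,x)$. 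Hence $v$ is a minimax $C_f$-supersolution of \eqref{E:DiscountedHJB}, and the lemma follows. I expect the two delicate points to be the a.e.~inequality for $H$ in the second step, which rests on a measurable-selection argument exploiting that $A$ is a countable union of compact metrizable sets (cf.~Chapter~21 of \cite{CohenElliott}), and the passage to the limit $\eps\downarrow 0$, which relies on the compactness of the sets $\mathcal{Y}^{C_f}$ and on the continuity of $v$; the remaining estimates are routine.
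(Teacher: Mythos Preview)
Your approach is correct and genuinely different from the paper's. The paper proves the lemma \emph{directly}: it fixes a test function $\varphi\in\underline{\mathcal{A}}^{C_f} w(s_0,x_0,\tilde{x}_0)$, uses the minimax subsolution property of $u$ to produce a pair $(x,y)\in\mathcal{Y}^{C_f}(s_0,x_0,y_0,z)$, and then---in place of a supersolution property for $v$---applies the dynamic programming principle (Proposition~\ref{P:DiscountedDPP}) to obtain, for each $\delta,\eps>0$, an $\eps$-optimal control on $[s_0,s_0+\delta]$. The resulting integral inequality is then manipulated exactly as in \eqref{L:Doubling:H2}, with additional error terms of order $\delta^2$ coming from the discount factor; dividing by $\delta$ and sending $\delta\downarrow 0$ along a subsequence yields \eqref{E:Viscosity:H:updated2}. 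In short, the paper re-runs the proof of Lemma~\ref{L:Doubling:H} with the DPP substituting for the supersolution property of $v$.

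Your route is more modular: you first show that $v$ is a minimax $C_f$-supersolution of \eqref{E:DiscountedHJB} (via an $\eps$-optimal control, an integrating-factor estimate, and a compactness limit in $\mathcal{Y}^{C_f}$), and then simply invoke Lemma~\ref{L:Doubling:H}. This has the advantage of producing an independently useful intermediate statement and avoiding duplication of the doubling argument; the paper's approach, by contrast, keeps the DPP and the viscosity-test-function machinery tightly coupled and never asserts a minimax property for $v$ prior to Theorem~\ref{T:DiscoundedVerification}.

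Two small remarks. First, your claim that $\mathcal{X}^{L_f}\subseteq\mathcal{X}^{C_f}$ requires $L_f\le C_f$, which Assumption~\ref{A:DOC} does not impose; in fact the ``$L_f$'' in the lemma's hypothesis is a slip, and the paper's own proof immediately uses the $C_f$-subsolution property of $u$ (consistent with $L_H=C_f$ for this Hamiltonian). Second, the ``delicate point'' you flag---the a.e.~inequality $H(s,x^\eps,y^\eps(s),z)\le \ell^{s_0,x_0,\alpha^\eps}(s)+((x^\eps)'(s),z)-\lambda^{s_0,x_0,\alpha^\eps}(s)\,y^\eps(s)$---does \emph{not} require measurable selection: it is immediate from the definition \eqref{E:DOC:H} of $H$ as an infimum, once one identifies $\essinf_a=\inf_a$ using the continuity in $a$ from Assumption~\ref{A:DOC}~(i). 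Measurable selection is needed only for the reverse direction (choosing a control that nearly realizes the infimum), which enters in Lemma~\ref{L:value:subsol}, not here.
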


\begin{proof}
{\color{black} Since  $u$ is u.s.c.~and $v$ is continuous (Proposition~\ref{P:value:regular}),
 the function $w$ is u.s.c.~and the function
$(s,x,\tilde{x})\mapsto \Upsilon(s,x)-w(s,x,\tilde{x})=v(s,\tilde{x})$ is l.s.c.,
i.e., we have the regularity  required by Definition~\ref{D:wuv:viscosity:updated2}.}

{\color{black}Now,} fix $(s_0,x_0,\tilde{x}_0)\in [0,T)\times\Omega\times\Omega$ and
$\varphi\in\underline{\mathcal{A}}^{(C_f)} w(s_0,x_0,\tilde{x}_0)$
with corresponding time $T_0\in (s_0,T]$. Let $w(s_0,x_0,\tilde{x}_0)>0$. Put
\begin{align*}
y_0:=u(s_0,x_0),\, \tilde{y}_0:=v(s_0,x_0),\,
z:=\partial_x \varphi(s_0,x_0,\tilde{x}_0),\, \tilde{z}:=-\partial_{\tilde{x}} \varphi(s_0,x_0,\tilde{x}_0).
\end{align*}
Also, for every $a\in\mathcal{A}$, put
\begin{align*}
\phi^a:=\phi^{s_0,\tilde{x}_0,a},\,\lambda^a:=\lambda^{s_0,\tilde{x}_0,a},\,
\chi^a:=\chi^{s_0,\tilde{x}_0,a},\,\ell^a:=\ell^{s_0,\tilde{x}_0,a},\,
f^a:=f(\cdot,\phi^a,a(\cdot)).
\end{align*}
First note that, due to the minimax $C_f$-subsolution property of $u$, we
can fix a pair $(x,y)\in\mathcal{Y}^{C_f}(s_0,x_0,y_0,z)$ such that, for all $t\in [s_0,T]$, we have
\begin{align}\label{E:value:supersol1}
u(t,x)-y_0\ge y(t)-y_0=\int_{s_0}^t \left[(x^\prime(s),z)-H(s,x,y(s),z)\right]\,\dd s.
\end{align}
Next, let $\delta\in (0,T_0-s_0]$ and let $\eps>0$. By Proposition~\ref{P:DiscountedDPP},
there exists a control $a=a^{\delta,\eps}\in\mathcal{A}$ such that
\begin{align*}
\tilde{y}_0=v(s_0,x_0)>\chi^a(s_0+\delta)\,v(s_0+\delta,\phi^a)+\int_{s_0}^{s_0+\delta}\chi^a(s)\,\ell^a(s)\,\dd s
 -\delta\,\eps.
\end{align*}
Thus
\begin{equation}\label{E:value:supersol2}
\begin{split}
&-\left[v(s_0+\delta,\phi^a)-\tilde{y}_0\right]\\ &\qquad>
-\delta\,\eps+[\chi^a(s_0+\delta)-1]\,v(s_0+\delta,\phi^a)+\int_{s_0}^{s_0+\delta}\chi^a(s)\,\ell^a(s)\,\dd s\\
&\qquad  =
-\delta\,\eps+\int_{s_0}^{s_0+\delta} \Bigl\{
[-\lambda^a(s)]\,\chi^a(s)\,v(s_0+\delta,\phi^a)+\chi^a(s)\,\ell^a(s)
\Bigr\}\,\dd s.
\end{split}
\end{equation}
Now we use the test function property of $\varphi$, i.e., we have
$(\varphi-w)(s_0,x_0,\tilde{x}_0)\le (\varphi-w)(s_0+\delta,x,\phi^a)$, which together
with  \eqref{E:value:supersol1} and
\eqref{E:value:supersol2} yield
\begin{align*}
&\int_{s_0}^{s_0+\delta} \left[\partial_t \varphi(s,x,\phi^a)+(x^\prime(s),\partial_x \varphi(s,x,\phi^a))
+(f^a(s),\partial_{\tilde{x}} \varphi(s,x,\phi^a))\right]\,\dd s\\
&=\varphi(s_0+\delta,x,\phi^a)-\varphi(s_0,x_0,\tilde{x}_0)\\
&\ge \left[u(s_0+\delta,x)-v(s_0+\delta,\phi^a)\right]-\left[y_0-\tilde{y}_0\right]\\
&> \int_{s_0}^{s_0+\delta} \left[(x^\prime(s),z)-H(s,x,y(s),z)
-\lambda^a(s)\,\chi^a(s)\,v(s_0+\delta,\phi^a)+\chi^a(s)\,\ell^a(s)\right]\,\dd s \\
&\qquad -\delta\,\eps.
\end{align*}
Consequently,
\begin{align*}
-\delta\,\eps&<\int_{s_0}^{s_0+\delta} \Bigl\{
 \partial_t \varphi(s,x,\phi^a)+H(s,x,y(s),z)
 -\left[\ell^a(s)+(f^a(s),\tilde{z})-\lambda^a(s)\,\tilde{y}_0\right]\\
 &\qquad\qquad\quad 
+(x^\prime(s),\partial_x \varphi(s,x,\phi^a)-z)
 +(f^a(s),\tilde{z}+\partial_{\tilde{x}} \varphi(s,x,\phi^a))\\
  &\qquad\qquad\quad 
  +[1-\chi^a(s)]\,\ell^a(s)
 +\lambda^a(s)\,\chi^a(s)\,v(s_0+\delta,\phi^a)
 -\lambda^a(s)\,\tilde{y}_0\Bigr\}\,\dd s\\
 &\le  
 \int_{s_0}^{s_0+\delta} \Bigl\{
 \partial_t \varphi(s,x,\phi^a)+H(s,x,y(s),z)
 -H(s,\phi^a,\tilde{y}_0,\tilde{z})\\
  &\qquad\qquad\quad 
+(x^\prime(s),\partial_x \varphi(s,x,\phi^a)-z)
 +(f^a(s),\tilde{z}+\partial_{\tilde{x}} \varphi(s,x,\phi^a))\Bigr\}\,\dd s\\
 &\qquad + I^a(s_0+\delta)+J^a(s_0+\delta),
\end{align*}
where
\begin{align*}
I^a(s_0+\delta)&:=\int_{s_0}^{s_0+\delta} [1-\chi^a(s)]\,\ell^a(s)\,\dd s
=\int_{s_0}^{s_0+\delta} \left[\int_{s_0}^s \lambda^a(r)\,\chi^a(r)\,\dd r\right]\,\ell^a(s)\,\dd s,\\
J^a(s_0+\delta)&:=\int_{s_0}^{s_0+\delta} \left[
\lambda^a(s)\,\chi^a(s)\,v(s_0+\delta,\phi^a)-\lambda^a(s)\,\tilde{y}_0\right]\,\dd s\\
&=\int_{s_0}^{s_0+\delta} \Bigl\{
\lambda^a(s)\,[\chi^a(s)-1]\,v(t,\phi^a)
+\lambda^a(s)\,v(s_0+\delta,\phi^a)-\lambda^a(s)\,\tilde{y}_0\Bigr\}\,\dd s\\ 
&=J^a_1(s_0+\delta)+J^a_2(s_0+\delta),\\
J^a_1(s_0+\delta)&:=
\int_{s_0}^{s_0+\delta}  \lambda^a(s)\,
\int_{s_0}^s [-\lambda^a(r)\,\chi^a(r)]\, \dd r\,\dd s\cdot v(s_0+\delta,\phi^a),\text{ and}\\
J^a_2(s_0+\delta)&:=
\int_{s_0}^{s_0+\delta}  \lambda^a(s)\,\dd s\cdot [v(s_0+\delta,\phi^a)-\tilde{y}_0].
\end{align*}
We estimate now those error terms.
By Assumption~\ref{A:DOC},
\begin{align*}
\abs{I^a(s_0+\delta)}&\le C_\lambda\,C_f\,(1+\sup\nolimits_{s\le s_0+\delta}\abs{\phi^a(s)})\,\delta^2\le C_1\,\delta^2
\end{align*}
for some constant $C_1$ that depends only on $\tilde{x}_0$ because $\phi^a\in\mathcal{X}^{(C_f)} (s_0,\tilde{x}_0)$.
By Assumption~\ref{A:DOC} and Proposition~\ref{P:value:regular},
\begin{align*}
\abs{J^a_1(s_0+\delta)}&\le C_\lambda^2\,\delta^2\,\abs{v(s_0+\delta,\phi^a)}\le C_2\,\delta^2\text{ and}\\
\abs{J^a_2(s_0,\delta)}&\le (C_\lambda\,\delta)\,\abs{v(s_0+\delta,\phi^a)-v(s_0,\phi^a)}\le C_2\,\delta^2
\end{align*}
for some constant $C_2$ that depends only on   $\tilde{x}_0$.
Therefore, for all sufficiently large $n\in\N$, we have, with $\tilde{x}^n:=\phi^{a^{\delta,\eps}}\vert_{\delta=\eps=n^{-1}}$,
\begin{equation}\label{E:value:supersol3}
\begin{split}
&-\frac{1}{n^2}-\frac{C_1+2C_2}{n^2}\\&\qquad\le
\int_{s_0}^{s_0+n^{-1}} \Bigl[
 \partial_t \varphi(s,x,\tilde{x}^n)+H(s,x,y(s),z)
 -H(s,\tilde{x}^n,\tilde{y}_0,\tilde{z})\\
 &\qquad\qquad 
 +(x^\prime(s),\partial_x \varphi(s,x,\tilde{x}^n)-z)
 +((\tilde{x}^n)^\prime(s),\tilde{z}+\partial_{\tilde{x}} \varphi(s,x,\tilde{x}^n))\Bigr]\,\dd s\\
 &\qquad\le 
 \int_{s_0}^{s_0+n^{-1}} \Bigl[\partial_t \varphi(s,x,\tilde{x}^n)+M_L(1+\abs{y(s)}+\abs{z})\sup_{t\le s} \abs{x(t)-\tilde{x}^n(t)}\\
&\qquad\qquad +L_H(1+\sup_{t\le s}\abs{\tilde{x}^n(t)})\abs{z-\tilde{z}} +0\\
&\qquad\qquad +(x^\prime(s),\partial_x\varphi(s,x,\tilde{x})-z)+
((\tilde{x}^n)^\prime(s),\partial_{\tilde{x}}\varphi(s,x,\tilde{x}^n)+\tilde{z})\Bigr]\,\dd s,
\end{split}
\end{equation}
where the last inequality can be derived exactly as in \eqref{L:Doubling:H2}.
Finally, let $\tilde{x}$ be a limit  in $\mathcal{X}^{(C_f)}(s_0,\tilde{x}_0)$ of a 
convergent subsequence $(\tilde{x}^{n_k})_k$ of $(\tilde{x}^n)_n$ 
such that also $((\tilde{x}^{n_k})^\prime)_k$ converges weakly to
$\tilde{x}^\prime$ in $L^2(s_0,T;\R^d)$. Replacing $n$ by $n_k$ in  \eqref{E:value:supersol3}
and letting $k\to\infty$ yields \eqref{E:Viscosity:H:updated2}.
This concludes the proof.
\end{proof}

\begin{lemma}\label{L:value:subsol}
Let $v$ be the value function defined by \eqref{E:value}.
Let $u$ be a minimax $C_f$-supersolution.
Then the function $(t,x,\tilde{x})\mapsto w(t,x,\tilde{x}):=v(t,x)-u(t,\tilde{x})$,
$[0,T]\times\Omega\times\Omega\to\R$, is a viscosity $C_f$-subsolution of 
\eqref{E:Doubled:H:updated2}  with parameter
$\Upsilon=v$.
\end{lemma}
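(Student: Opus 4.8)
The plan is to mirror the proof of Lemma~\ref{L:value:supersol}, swapping the roles of the sub- and supersolution. Now $u$ is a minimax $C_f$-\emph{super}solution of \eqref{E:PPDE:H} and $v$ is the value function, which we want to play the role of the subsolution; so $w(t,x,\tilde{x})=v(t,x)-u(t,\tilde{x})$ with parameter $\Upsilon=v$. Fix $(s_0,x_0,\tilde{x}_0)\in[0,T)\times\Omega\times\Omega$, a test function $\varphi\in\underline{\mathcal{A}}^{(C_f)}w(s_0,x_0,\tilde{x}_0)$ with corresponding time $T_0$, and assume $w(s_0,x_0,\tilde{x}_0)>0$. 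Set $y_0:=v(s_0,x_0)$, $\tilde{y}_0:=u(s_0,\tilde{x}_0)$, $z:=\partial_x\varphi(s_0,x_0,\tilde{x}_0)$, $\tilde{z}:=-\partial_{\tilde{x}}\varphi(s_0,x_0,\tilde{x}_0)$.

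Next I would produce the two ``trajectory'' inputs. On the $u$-side, since $u$ is a minimax $C_f$-supersolution, for $y_0^u:=\tilde{y}_0\ge u(s_0,\tilde{x}_0)$ and the gradient datum $\tilde{z}$ there is, by Lemma~\ref{R:Minimax:Quantifiers}, a pair $(\tilde{x},\tilde{y})\in\mathcal{Y}^{C_f}(s_0,\tilde{x}_0,\tilde{y}_0,\tilde{z})$ with $\tilde{y}(t)\ge u(t,\tilde{x})$ for all $t\in[s_0,T]$; in particular $\tilde{y}(s_0+\delta)\ge u(s_0+\delta,\tilde{x})$. On the $v$-side, instead of a minimax property we use the dynamic programming principle (Proposition~\ref{P:DiscountedDPP}) exactly as in Lemma~\ref{L:value:supersol}: for each $\delta>0$ and $\eps>0$ pick a control $a=a^{\delta,\eps}\in\mathcal{A}$ with
\begin{align*}
y_0=v(s_0,x_0)>\chi^a(s_0+\delta)\,v(s_0+\delta,\phi^a)+\int_{s_0}^{s_0+\delta}\chi^a(s)\,\ell^a(s)\,\dd s-\delta\,\eps,
\end{align*}
where now $\phi^a:=\phi^{s_0,x_0,a}$ (started at $x_0$, not $\tilde{x}_0$, since $x$ is the $v$-argument here), and correspondingly $\lambda^a$, $\chi^a$, $\ell^a$, $f^a$. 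Unwinding the discount factor as in \eqref{E:value:supersol2} gives a lower bound for $-[v(s_0+\delta,\phi^a)-y_0]$ with the same $I^a,J^a_1,J^a_2$ error terms, estimated by $C\delta^2$ via Assumption~\ref{A:DOC} and Proposition~\ref{P:value:regular}.

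Then I would feed both estimates into the test-function inequality $(\varphi-w)(s_0,x_0,\tilde{x}_0)\le(\varphi-w)(s_0+\delta,\phi^a,\tilde{x})$, apply the chain rule to $\varphi$ along $(\phi^a,\tilde{x})$, and collect terms. The telescoping mimics \eqref{L:Doubling:H2}–\eqref{E:value:supersol3}: the $\ell^a+(f^a,\tilde{z})-\lambda^a y_0$ term produced by the DPP is $\ge H(\cdot,\phi^a,y_0,z)$-type quantities after writing $H$ as the essential infimum over $a\in A$ in \eqref{E:DOC:H}, and the $H$-difference is split as $H(s,\phi^a,\cdot,z)-H(s,\tilde{x},\cdot,z)$ (estimated by $M_L(1+|y|+|z|)\sup|\phi^a-\tilde{x}|$ via Assumption~\ref{A:H}(iv)), $H(s,\tilde{x},\cdot,z)-H(s,\tilde{x},\cdot,\tilde{z})$ (estimated by $L_H(1+\sup|\tilde{x}|)|z-\tilde{z}|$ via Assumption~\ref{A:H}(iii)), and a $y$-monotonicity term handled by Assumption~\ref{A:H}(v); here the sign $w(s_0,x_0,\tilde{x}_0)>0$, i.e.\ $y_0>\tilde{y}_0$, together with continuity of the relevant $y$-curves lets us restrict to a small interval $[s_0,s_0+\delta]$ on which the monotonicity term has the right sign. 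Finally, take $\delta=\eps=n^{-1}$, set $x^n:=\phi^{a^{1/n,1/n}}$, extract a subsequence with $x^n\to x$ in $\mathcal{X}^{(C_f)}(s_0,x_0)$ and $(x^n)'\rightharpoonup x'$ in $L^2$, divide by $n^{-1}$, and pass to the limit to obtain \eqref{E:Viscosity:H:updated2} with $\Upsilon=v$. The main obstacle is bookkeeping: making sure the swapped roles ($v$ in the first slot, built from the DPP; $u$ in the second slot, built from the minimax supersolution property applied to the gradient $\tilde{z}=-\partial_{\tilde{x}}\varphi$) are consistently tracked so that the signs in the $H$-splitting and in the monotonicity step come out as in Lemma~\ref{L:value:supersol}; once the inputs are set up correctly, the limiting argument is identical.
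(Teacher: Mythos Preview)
There is a genuine gap: you use the wrong direction of the dynamic programming principle, and as a consequence the whole chain of inequalities collapses.

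In Lemma~\ref{L:value:supersol}, $w=u-v$ has $v$ in the \emph{second} slot, so to bound $w(s_0+\delta,\cdot)-w(s_0,\cdot)$ from below one needs an \emph{upper} bound on $v(s_0+\delta,\phi^a)-\tilde y_0$; that is exactly what the near-optimal-control direction of Proposition~\ref{P:DiscountedDPP} delivers. Here $w=v-u$ has $v$ in the \emph{first} slot, so the test-function inequality $(\varphi-w)(s_0)\le(\varphi-w)(s_0+\delta,\phi^a,\tilde x)$ requires a \emph{lower} bound on $v(s_0+\delta,\phi^a)-y_0$. Your choice of a near-optimal control gives only an upper bound (``a lower bound for $-[v(s_0+\delta,\phi^a)-y_0]$''), which is useless in this direction. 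The paper instead invokes the other half of the DPP, namely
\[
y_0=v(s_0,x_0)\le \chi^a(s_0+\delta)\,v(s_0+\delta,\phi^a)+\int_{s_0}^{s_0+\delta}\chi^a(s)\,\ell^a(s)\,\dd s,
\]
which holds for \emph{every} control $a$ and yields the needed lower bound \eqref{E:value:subsol2}.

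This forces a second change you do not anticipate. After inserting \eqref{E:value:subsol2} and rearranging, one is left with $\int_{s_0}^{s_0+\delta}[\partial_t\varphi+\{\ell^a+(f^a,z)-\lambda^a y_0\}-H(s,\tilde x,\tilde y(s),\tilde z)+\cdots]\,\dd s\ge -C\delta^2$ for every $a$. To pass from the curly-bracket term to $H(s,\phi^a,y_0,z)$ one needs the inequality $\ell^a+(f^a,z)-\lambda^a y_0\le H(\cdot,\phi^a,y_0,z)+\eps$, not the trivial ``$\ge H$'' you invoke. Since $H$ is an essential infimum over $a\in A$, this requires a \emph{measurable selection} (e.g.\ Theorem~21.3.4 in \cite{CohenElliott}) producing a control $a^\eps\in\mathcal A$ with $\ell^{a^\eps}(s)+(f^{a^\eps}(s),z)-\lambda^{a^\eps}(s)\,y_0\le H(s,\phi^{a^\eps},y_0,z)+\eps$ for a.e.\ $s$. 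With this $a^\eps$ in hand (independent of $\delta$), one divides by $\delta$, lets $\delta\downarrow 0$, and then lets $\eps\downarrow 0$; no subsequence extraction for $\phi^{a^{\delta,\eps}}$ is needed. The monotonicity step via Assumption~\ref{A:H}~(v) then uses $y_0>\tilde y_0$ together with $\tilde y(s)\to\tilde y_0$ to get $H(s,\tilde x,y_0,\tilde z)\le H(s,\tilde x,\tilde y(s),\tilde z)$ on a short interval, exactly as you say.
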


\begin{proof}
{\color{black} First, note that  $v$ is continuous (Proposition~\ref{P:value:regular}) and  $u$ is l.s.c.
 Thus $w$ is u.s.c.~and
$(s,x,\tilde{x})\mapsto \Upsilon(s,x)-w(s,x,\tilde{x})=u(s,\tilde{x})$ is l.s.c.,
which is required by Definition~\ref{D:wuv:viscosity:updated2} in order for $w$ to
be a viscosity $C_f$-subsolution of \eqref{E:Doubled:H:updated2}.}

{\color{black} Next, let} $(s_0,x_0,\tilde{x}_0)\in [0,T)\times\Omega\times\Omega$.
Let $\varphi\in\underline{\mathcal{A}}^{(C_f)} w(s_0,x_0,\tilde{x}_0)$ with corresponding time
$T_0\in (s_0,T]$. Suppose that $w(s_0,x_0,\tilde{x}_0)>0$. Put
\begin{align*}
y_0&:=v(s_0,x_0),\quad \tilde{y}_0:=u(s_0,\tilde{x}_0),\quad
z:=\partial_x \varphi(s_0,x_0,\tilde{x}_0),\quad \tilde{z}:=-\partial_{\tilde{x}} \varphi(s_0,x_0,\tilde{x}_0).
\end{align*}
For every $a\in\mathcal{A}$, set
\begin{align*}
\phi^a&:=\phi^{s_0,x_0,a},\, \lambda^a:=\lambda^{s_0,x_0,a},\,
\chi^a:=\chi^{s_0,x_0,a},\, \ell^a:=\ell^{s_0,x_0,a},\,
f^a:=f(\cdot,\phi^a,a(\cdot)).
\end{align*}

Since $u$ is a minimax $C_f$-supersolution, there is a pair
$(\tilde{x},\tilde{y})\in\mathcal{Y}^{(C_f)}(s_0,\tilde{x}_0,\tilde{y}_0,\tilde{z})$ such that,
for all $t\in [s_0,T]$, we have
\begin{align}\label{E:value:subsol1}
u(t,\tilde{x})-\tilde{y}_0\le \tilde{y}(t)-\tilde{y}_0=\int_{s_0}^t \left[(\tilde{x}^\prime(s),\tilde{z})-
H(s,\tilde{x},\tilde{y}(s),\tilde{z})\right]\,\dd s.
\end{align}

{\color{black} Now,}
let $\delta\in (0,T_0-s_0]$ and $a\in\mathcal{A}$.
 By Proposition~\ref{P:DiscountedDPP},
 \begin{align*}
 y_0=v(s_0,x_0)\le \chi^a(s_0+\delta)\,v(s_0+\delta,\phi^a)+\int_{s_0}^{s_0+\delta} \chi^a(s)\,\ell^a(s)\,\dd s.
 \end{align*}
Thus
\begin{equation}\label{E:value:subsol2}
\begin{split}
v(s_0+\delta,\phi^a)-y_0&\ge [1-\chi^a(s_0+\delta)]\,v(s_0+\delta,\phi^a)-\int_{s_0}^{s_0+\delta}\chi^a(s)\,\ell^a(s)\,\dd s\\
&=\int_{s_0}^{s_0+\delta}\left[
\lambda^a(s)\,\chi^a(s)\,v(s_0+\delta,\phi^a)-\chi^a(s)\,\ell^a(s)\right]\,\dd s.
\end{split}
\end{equation}
By \eqref{E:value:subsol1}, \eqref{E:value:subsol2}, and the test function property of $\varphi$,
\begin{align*}
&\int_{s_0}^{s_0+\delta}\left[
\partial_t \varphi(s,\phi^a,\tilde{x})+(f^a(s),\partial_x \varphi(s,\phi^a,\tilde{x}))+
(\tilde{x}^\prime(s),\partial_{\tilde{x}} \varphi(s,\phi^a,\tilde{x}))\right]\,\dd s\\
&\quad \ge [v(s_0+\delta,\phi^a)-u(s_0+\delta,\tilde{x})]-[y_0-\tilde{y}_0]\\
&\quad \ge \int_{s_0}^{s_0+\delta}\left[\lambda^a(s)\,\chi^a(s)\,v(s_0+\delta,\phi^a)-\chi^a(s)\,\ell^a(s)
-(\tilde{x}^\prime(s),\tilde{z})+H(s,\tilde{x},\tilde{y}(s),\tilde{z})\right]\,\dd s.
\end{align*}
Therefore,
\begin{equation}\label{E:value:subsol3}
\begin{split}
&\int_{s_0}^{s_0+\delta} \Bigl\{\partial_t \varphi(s,\phi^a,\tilde{x})
+[\ell^a(s)+(f^a(s),z)-\lambda^a(s)\,y_0]
-H(s,\tilde{x},\tilde{y}(s),\tilde{z})\\
&\qquad\qquad +(f^a(s),\partial_x\varphi(s,\phi^a,\tilde{x})-z)
+(\tilde{x}^\prime(s),\partial_{\tilde{x}}\varphi(s,\phi^a,\tilde{x})+\tilde{z})\Bigr\}\,\dd s\\
&\quad\ge \int_{s_0}^{s_0+\delta} \Bigl\{\lambda^a(s)\,\chi^a(s)\,v(s_0+\delta,\phi^a)
-\lambda^a(s)\,y_0+(1-\chi^a)\,\ell^a(s)\Bigr\}\,\dd s\\
&\quad\ge -C\delta^2
\end{split}
\end{equation}
for some constant $C>0$ independent from $\delta$ and $a$. The last inequality in \eqref{E:value:subsol3}
can be shown exactly as the estimation of the terms $ I^a(s_0+\delta)$ and $J^a(s_0+\delta)$
in the proof of Lemma~\ref{L:value:supersol}.

We continue now by proceeding  similarly as in the Step~1 of  proof of Theorem~6.7 in \cite{Qiu18SICON_SHJB},
i.e., we obtain, via a measurable selection argument (e.g., Theorem~21.3.4 in \cite{CohenElliott}),
for every $\eps>0$
the existence of a control $a^\eps\in\mathcal{A}$ such that
\begin{align*}
\ell^{a^\eps}(s)+(f^{a^\eps}(s),z)-\lambda^{a^\eps}(s)\,y_0\le H(s,\phi^{a^\eps},y_0,z)+\eps\quad\text{a.e.~in $(s_0,T_0)$.}
\end{align*}
Thus, together with \eqref{E:value:subsol3},
\begin{equation}\label{E:valuesubsol4}
\begin{split}
&-C\delta^2-\delta\eps\le \int_{s_0}^{s_0+\delta} \Bigl[
\partial_t \varphi(s,\phi^{a^\eps},\tilde{x})
+H(s,\phi^{a^\eps},y_0,z)-H(s,\tilde{x},\tilde{y}(s),z)\\
&\qquad\qquad\qquad +(f^{a^\eps}(s),\partial_x\varphi(s,\phi^{a^\eps},\tilde{x})-z)
+(\tilde{x}^\prime(s),\partial_{\tilde{x}}\varphi(s,\phi^{a^\eps},\tilde{x})+\tilde{z})
\Bigr]\,\dd s\\
&\qquad\le  \int_{s_0}^{s_0+\delta} \Bigl[\partial_t \varphi(s,\phi^{a^\eps},\tilde{x})+
M_L(1+\abs{y_0}+\abs{z})\sup_{t\le s} \abs{\phi^{a^{\eps}}(t)-\tilde{x}(t)}\\
&\qquad\qquad\qquad +L_H(1+\sup_{t\le s}\abs{\tilde{x}(t)})\abs{z-\tilde{z}}\\
&\qquad\qquad\qquad +(f^{a^\eps}(s),\partial_x\varphi(s,\phi^{a^\eps},\tilde{x})-z)
+(\tilde{x}^\prime(s),\partial_{\tilde{x}}\varphi(s,\phi^{a^\eps},\tilde{x})+\tilde{z})\Bigr]\,\dd s.
\end{split}
\end{equation}
We refer to \eqref{L:Doubling:H2} for details regarding the last inequality.
Finally, dividing \eqref{E:valuesubsol4} by $\delta$, letting $\delta\downarrow 0$, and noting
that $\eps>0$ was arbitrary yields \eqref{E:Viscosity:H:updated2}, which concludes the proof.
\end{proof}

\begin{theorem}\label{T:DiscoundedVerification}
The value function $v$ is the unique minimax $C_f$-solution of \eqref{E:DiscountedHJB}.
\end{theorem}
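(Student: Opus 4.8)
The plan is to combine Theorem~\ref{T:Perron} (existence and uniqueness of a minimax solution) with the two doubling lemmas, Lemma~\ref{L:value:supersol} and Lemma~\ref{L:value:subsol}, and the comparison principle for the doubled equation, Theorem~\ref{T:Comparison:H}. First I would note that, by the Remark following \eqref{E:DOC:H} together with Assumption~\ref{A:DOC}, the Hamiltonian $H$ defined by \eqref{E:DOC:H} and the terminal datum $h$ satisfy Assumptions~\ref{A:h} and~\ref{A:H} with $L_H=C_f$. Hence Theorem~\ref{T:Perron} applies and furnishes a unique minimax $C_f$-solution $u_0$ of \eqref{E:DiscountedHJB}. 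By Definition~\ref{D:MinimaxSolution}, $u_0$ is both l.s.c.\ and u.s.c., hence continuous, and $u_0(T,\cdot)=h$. On the other hand, $v$ is continuous by Proposition~\ref{P:value:regular}, and $v(T,\cdot)=h$ is immediate from \eqref{E:value}, since for $s=T$ the integral term vanishes while $\chi^{T,x,a}(T)=1$ and $\phi^{T,x,a}=x$.

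It then suffices to prove $v=u_0$, and I would establish the two inequalities separately by doubling and comparison. For $v\le u_0$: since $u_0$ is in particular a minimax $C_f$-supersolution, Lemma~\ref{L:value:subsol} applied with $u=u_0$ shows that $w_1(t,x,\tilde{x}):=v(t,x)-u_0(t,\tilde{x})$ is a viscosity $C_f$-subsolution of \eqref{E:Doubled:H:updated2} with parameter $\Upsilon=v$, which is u.s.c.\ because $v$ is continuous. Since $w_1(T,x,x)=h(x)-h(x)=0\le 0$, Theorem~\ref{T:Comparison:H} yields $w_1(t,x,x)\le 0$ on $[0,T]\times\Omega$, i.e., $v\le u_0$. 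For $u_0\le v$: since $u_0$ is also a minimax $C_f$-subsolution, Lemma~\ref{L:value:supersol} applied with $u=u_0$ shows that $w_2(t,x,\tilde{x}):=u_0(t,x)-v(t,\tilde{x})$ is a viscosity $C_f$-subsolution of \eqref{E:Doubled:H:updated2} with parameter $\Upsilon=u_0$, which is u.s.c.\ since $u_0$ is continuous; as $w_2(T,x,x)=0$, Theorem~\ref{T:Comparison:H} again gives $u_0\le v$. Combining the two inequalities, $v=u_0$, so $v$ is a minimax $C_f$-solution of \eqref{E:DiscountedHJB}, and it is the unique one by Theorem~\ref{T:Perron} (equivalently, by Corollary~\ref{C:Comparison}).

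I do not expect a substantive obstacle, as all the genuinely hard work --- the construction of the abstract, time-semicontinuous doubled equation \eqref{E:Doubled:H:updated2}, the penalty functional in the proof of Theorem~\ref{T:Comparison:H}, and the measurable-selection and weak-compactness arguments inside the two doubling lemmas --- is already contained in the results I am allowed to invoke. The only points needing care are bookkeeping: checking that all the minimax and viscosity notions in play are the $C_f$-versions (the controlled trajectories $\phi^{t_0,x_0,\alpha}$ satisfy $|\phi'(t)|\le C_f(1+\sup_{s\le t}|\phi(s)|)$, so they lie in $\mathcal{X}^{C_f}$, and $L_H=C_f$), verifying the upper semicontinuity of the parameter $\Upsilon$ required by Theorem~\ref{T:Comparison:H} from the continuity of $v$ and of $u_0$, and confirming the terminal conditions. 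One could alternatively bypass Theorem~\ref{T:Perron} and argue directly that $v$ is simultaneously a minimax $C_f$-sub- and supersolution, mimicking the semicontinuous-envelope argument in its proof, but routing through $u_0$ is shorter.
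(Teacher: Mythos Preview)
Your proposal is correct and follows essentially the same route as the paper: invoke Theorem~\ref{T:Perron} to obtain the unique minimax $C_f$-solution $u_0$, then use Lemma~\ref{L:value:supersol} and Lemma~\ref{L:value:subsol} together with Theorem~\ref{T:Comparison:H} to get the two inequalities $u_0\le v$ and $v\le u_0$. You have merely spelled out the bookkeeping (terminal conditions, continuity of the parameters $\Upsilon$) that the paper leaves implicit.
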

\begin{proof}
By Theorem~\ref{T:Perron}, there is a unique minimax  $C_f$-solution $u$ of of \eqref{E:DiscountedHJB}.
By Theorem~\ref{T:Comparison:H} and Lemma~\ref{L:value:supersol}, $u\le v$.
By Theorem~\ref{T:Comparison:H} and Lemma~\ref{L:value:subsol}, $v\le u$.
\end{proof}

{\color{black}
\section{The non-path-dependent case}\label{S:NonPPDE}
\subsection{General theory}
Consider the non-path-dependent counterpart of \eqref{E:PPDE:H},
i.e., the terminal value problem
\begin{equation}\label{E:PPDE:widehatH}
\begin{split}
-\frac{\partial}{\partial t} \widehat{u}(t,\xi)-\widehat{H}\Big (t,\xi,\widehat{u}(t,\xi),
\frac{\partial}{\partial\xi} \widehat{u}(t,\xi)\Big )&=0,\quad (t,\xi)\in (0,T)\times\R^d,\\
 \widehat{u}(T,\xi)&=\widehat{h}(\xi),\quad \xi\in\R^d,
\end{split}
\end{equation}
where $\widehat{H}:[0,T]\times\R^d\times\R\times\R^d\to\R$ and $\widehat{h}:\R^d\to\R$.
Minimax solutions for \eqref{E:PPDE:widehatH} 
with continuous Hamiltonian $\widehat{H}$ have been studied in \cite{Subbotin}.
Here, we consider the case of $\widehat{H}$ to be only measurable in time.

The following assumptions are in force.

\begin{assumption}\label{A:widehat:h}
The function $\widehat{h}$ is continuous.
\end{assumption}

\begin{assumption}\label{A:widehat:H}
The following holds:

(i) For a.e.~$t\in (0,T)$, the function $(\xi,y,z)\mapsto \widehat{H}(t,\xi,y,z)$, $\R^d\times\R\times\R^d\to\R$,
is continuous.

(ii) For each $(\xi,y,z)\in\R^d\times\R\times\R^d$, the function $t\mapsto \widehat{H}(t,\xi,y,z)$, $[0,T]\to\R$,
is Borel measurable.

(iii) There is a constant $L_{\widehat{H}}\ge 0$ 
such that, for a.e.~$t\in (0,T)$, every $\xi\in\R^d$, $y\in\R$, $z$, $\tilde{z}\in\R^d$, we have
\begin{align*}
|\widehat{H}(t,\xi,y,z)-\widehat{H}(t,\xi,y,\tilde{z})|
\le L_{\widehat{H}}(1+\abs{\xi})\abs{z-\tilde{z}}.
\end{align*}

(iv)  There is a constant $M_{\widehat{H}}$ such that 
for every $\xi$, $\tilde{\xi}\in\R^d$,
$y\in\R$, $z\in\R^d$, and a.e.~$t\in (t_0,T)$,
\begin{align*}
|\widehat{H}(t,\xi,y,z)-\widehat{H}(t,\tilde{\xi},y,z)|&\le M_{\widehat{H}}
(1{+\abs{y}}+\abs{z})\,\sup_{s\le t} |\xi-\tilde{\xi}|. 
\end{align*}

(v) For a.e.~$t\in (0,T)$ and each $(\xi,z)\in\R^d\times\R^d$, the function 
$y\mapsto \widehat{H}(t,\xi,y,z)$, $\R\to\R$, is non-increasing.

(vi) There exists  a constant $C_{\widehat{H}}\ge 0$ such that, for a.e.~$t\in (0,T)$  and each $(\xi,y)\in\R^d\times\R$,
we have
\begin{align*}
|\widehat{H}(t,\xi,y,0)|\le C_{\widehat{H}}(1+\abs{\xi}+\abs{y}).
\end{align*}
\end{assumption}

Next, we define function spaces needed for the definition of minimax solutions in the non-path-dependent case.
Given $L\ge 0$,  $s_0\in [0,T)$, $\xi\in\R^d$, $y_0\in\R$, and $z\in\R^d$, put
\begin{align*}
&\widehat{\mathcal{X}}^L(s_0,\xi):=\Big \{\text{all absolutely continuous functions $x:[s_0,T]\to\R^d$ 
with}\\
&\qquad\qquad \abs{x^\prime(t)}\le L\Big (1+\sup_{s_0\le s\le t} \abs{x(s)}\Big )\text{ a.e.~on $(s_0,T)$
and $x(s_0)=\xi$}
\Big \},\\
&\widehat{\mathcal{Y}}^L
(s_0,\xi,y_0,z):=\Big \{(x,y)\in\widehat{\mathcal{X}}^L(s_0,\xi)\times C([s_0,T]):\\
&\qquad\qquad y(t)=y_0+\int_{s_0}^t [{\color{black} x^\prime(s)\cdot z}-
\widehat{H}(s,x(s),y(s),z)]\,\dd s\text{ on $[s_0,T]$}\Big \}.
\end{align*}

\begin{definition}\label{D:widehatMinimaxSolution}
Let $L\ge 0$ and $\widehat{u}:[0,T]\times\R^d\to\R$.

(i) $\widehat{u}$ 
is a \emph{minimax $L$-supersolution} of \eqref{E:PPDE:widehatH} 
if $\widehat{u}\in\mathrm{LSC}([0,T]\times\R^d)$,
if $\widehat{u}(T,\cdot)\ge \widehat{h}$ on $\R^d$, and if, for every $(s_0,\xi,z)\in [0,T)\times\R^d\times\R^d$, and
every $y_0\ge \widehat{u}(s_0,\xi)$, there exists an $(x,y)\in\widehat{\mathcal{Y}}^L(s_0,\xi,y_0,z)$ such that
$y(t)\ge \widehat{u}(t,x(t))$ for each $t\in [s_0,T]$.

(ii)  $\widehat{u}$ is a \emph{minimax $L$-subsolution} of 
\eqref{E:PPDE:widehatH} if $\widehat{u}\in\mathrm{USC}([0,T]\times\R^d)$,
if $\widehat{u}(T,\cdot)\le \widehat{h}$ on $\R^d$, and if, for every $(s_0,\xi,z)\in [0,T)\times\R^d\times\R^d$, and
every $y_0\le \widehat{u}(s_0,\xi)$, there exists an $(x,y)\in\widehat{\mathcal{Y}}^L(s_0,\xi,y_0,z)$ such that
$y(t)\le \widehat{u}(t,x(t))$ for each  $t\in [s_0,T]$.

(iii) $\widehat{u}$ is a \emph{minimax $L$-solution} of \eqref{E:PPDE:widehatH} if it is 
both a \emph{minimax $L$-supersolution}
and a \emph{minimax $L$-subsolution} of  \eqref{E:PPDE:widehatH}.
\end{definition}

Note that the above definition is due to \cite{Subbotin}.
We want to remark that in \cite{Subbotin} instead of $\widehat{\mathcal{X}}^L(s_0,\xi)$ the smaller space
\begin{align*}
&\{\text{all absolutely continuous functions $x:[s_0,T]\to\R^d$ 
with}\\
&\qquad\qquad \abs{x^\prime(t)}\le L(1+\ \abs{x(t)})\text{ a.e.~on $(s_0,T)$
and $x(s_0)=\xi$}\}
\end{align*}
is used.

We establish now consistency results between the non-path-dependent case in this section
and the path-dependent case treated previously
(note that this topic has been 
addressed in section~4.1 of \cite{GLP21AMO} and by the discussion on p.~276 in \cite{GL24survey}).
Moreover, we immediately obtain a comparison principle for \eqref{E:PPDE:widehatH}.
\begin{proposition}\label{P:widehat:consistency}
Let   $h:\Omega\to\R$ and $H:[0,T]\times\Omega\times\R\times\R^d\to\R$ be  defined by.
\begin{align*}
h(x):=\widehat{h}(x(T))\text{ and }  H(t,x,y,z):=\widehat{H}(t,x(t),y,z)
\end{align*}
Let $L\ge 0$. Then the following holds:

(i) If $\widehat{u}$ is a minimax $L$-supersolution of \eqref{E:PPDE:widehatH},
then $u:[0,T]\times\Omega\to\R$ defined by $u(t,x):=\widehat{u}(t,x(t))$ is
a minimax $L$-supersolution of  \eqref{E:PPDE:H}.

(ii) If $\widehat{u}$ is a minimax $L$-subsolution of \eqref{E:PPDE:widehatH},
then $u:[0,T]\times\Omega\to\R$ defined by $u(t,x):=\widehat{u}(t,x(t))$ is
a minimax $L$-subsolution of  \eqref{E:PPDE:H}.

(iii) If $\widehat{u}_1$ is a minimax $L$-sub- and 
 $\widehat{u}_2$ is a minimax $L$-supersolution of \eqref{E:PPDE:widehatH},
 then $\widehat{u}_1\le \widehat{u}_2$.
 
 (iv) If $u$ is a a minimax $L$-supersolution of  \eqref{E:PPDE:H} 
 and $u(t,x)=\widehat{u}(t,x(t))$ for some function $\widehat{u}:[0,T]\times\R^d\to\R$,
 then $\widehat{u}$ is a minimax $L$-supersolution of \eqref{E:PPDE:widehatH}.
 
  (v) If $u$ is a a minimax $L$-subsolution of  \eqref{E:PPDE:H}
 and $u(t,x)=\widehat{u}(t,x(t))$ for some function $\widehat{u}:[0,T]\times\R^d\to\R$,
 then $\widehat{u}$ is a minimax $L$-subsolution of \eqref{E:PPDE:widehatH}.
\end{proposition}

\begin{proof}
First note that, thanks to  Assumptions~\ref{A:widehat:h} and \ref{A:widehat:H},
the functions $h$ and $H$ satisfy Assumptions~\ref{A:h} and \ref{A:H}, i.e.,
we can apply the results of section 
 \ref{S:Comparison}.

We prove only (i), (iii), and (iv). The proofs of (ii) and (v), resp., are parallel to the proofs of (i) and (iv), resp.

(i) Clearly, $\widehat{u}\in\mathrm{LSC}([0,T]\times\R^d)$ implies $u\in\mathrm{LSC}([0,T]\times\Omega)$
and we also have $u(T,\cdot)\ge h$ on $\Omega$. 

It remains to check the interior condition. To this end,
fix $(s_0,x_0,z)\in [0,T)\times\Omega\times\R^d$ and $y_0\ge u(s_0,x_0)$. Since $\widehat{u}$
is a minimax  $L$-supersolution of \eqref{E:PPDE:widehatH},
there is  a pair $(x,y)\in\widehat{\mathcal{Y}}^L(s_0,x_0(s_0),y_0,z)$ such that, for every $t\in [s_0,T]$, we have
$y(t)\ge \widehat{u}(t,x(t))$.
Define  $\tilde{x}\in\Omega$ by 
$$\tilde{x}(t):=\bfone_{[0,s_0)}(t)\,x_0(t)+\bfone_{[s_0,T]}(t)\,x(t).
$$
Then $(\tilde{x},y)\in \mathcal{Y}^L(s_0,x_0,y_0,z)$ and, for every $t\in [s_0,T]$, we have
$y(t)\ge u(t,\tilde{x})$, i.e., we have established that $u$ is a minimax $L$-supersolution of  \eqref{E:PPDE:H}.

(iii) By parts~(i) and (ii) together with Corollary~\ref{C:Comparison}, we have  $\widehat{u}_1\le \widehat{u}_2$.

(iv) It is easy to check that
 $u\in\mathrm{LSC}([0,T]\times\Omega)$ implies $\widehat{u}\in\mathrm{LSC}([0,T]\times\R^d)$ 
and we also have $\widehat{u}(T,\cdot)\ge \widehat{h}$ on $\R^d$.

To verify the interior condition, let $(s_0,\xi,z)\in [0,T)\times\R^d\times\R^d$ and
$y_0\ge \widehat{u}(s_0,\xi)$.
Consider the constant path $x_0\in\Omega$ defined by 
$$
x_0(t):=\xi.
$$
Since $u$ is a minimax $L$-supersolution of  \eqref{E:PPDE:H},
there is a pair $(x,y)\in\mathcal{Y}^L(s_0,x_0,y_0,z)$ such that, for every $t\in [s_0,T]$,
we have 
$$y(t)\ge u(t,x)=\widehat{u}(t,x(t)).
$$
Since $(x\vert_{[s_0,T]},y)\in\widehat{\mathcal{Y}}^L(s_0,\xi,y_0,z)$,
we can conclude that $\widehat{u}$ is a minimax $L$-supersolution of \eqref{E:PPDE:widehatH}.
\end{proof}
\subsection{Optimal control}
We use as control space the set $A$ and as set of admissible controls the set $\mathcal{A}$,
which have been introduced at the beginning of section~\ref{S:OptimalControl}. The remaining data
are  functions $\widehat{f}:[0,T]\times\R^d\times A\to\R^d$, $\widehat{\lambda}:[0,T]\times\R^d\times A\to \R_+$,
$\widehat{\ell}:[0,T]\times\R^d\times A\to\R$, and $\widehat{h}:\R^d\to\R$.

The next assumption is in force.

\begin{assumption}\label{A:widehat:DOC}
Suppose that the following holds:

(i) For a.e.~$t\in [0,T]$, 
the map $(\xi,a)\mapsto (\widehat{f},\widehat{\lambda},\widehat{\ell})(t,\xi,a)$, $\R^d\times A\to\R^d\times\R_+\times\R$,
is continuous. 

(ii) For all $(\xi,a)\in\R^d\times A$, the map $t\mapsto
 (\widehat{f},\widehat{\lambda},\widehat{\ell})(t,\xi,a)$, $[0,T]\to\R^d\times\R_+\times\R$,
is Borel measurable.

(iii) There are constants $C_{\widehat{f}}$, $C_{\widehat{\lambda}}\ge 0$ such that,
 for a.e.~$t\in (0,T)$ and all $\xi\in\R^d$,  we have
\begin{align*}
\sup_{a\in A}\left(|\widehat{f}(t,\xi,a))|+|\widehat{\ell}(t,\xi,a)|\right)\le 
C_{\widehat{f}}\,(1+\abs{\xi})
\text{ and } \sup_{a\in A} \abs{\lambda(t,\xi,a)}\le C_{\widehat{\lambda}}.
\end{align*}

(iv) There is a constant $L_{\widehat{f}}\ge 0$ such that, for a.e.~$t\in (0,T)$ and all $\xi$, $\tilde{\xi}\in\R^d$, 
we have
\begin{align*}
&\sup_{a\in A}\left(|\widehat{f}(t,\xi,a)-\widehat{f}(t,\tilde{\xi},a)|+
|\widehat{\ell}(t,\xi,a)-\widehat{\ell}(t,\tilde{\xi},a)|+
|\widehat{\lambda}(t,\xi,a)-\widehat{\lambda}(t,\tilde{\xi},a)|\right)\\ 
&\qquad +|\widehat{h}(\xi)-\widehat{h}(\tilde{\xi})| \le L_{\widehat{f}}\,|\xi-\tilde{\xi}|.
\end{align*}
\end{assumption}

Given $(s,\xi,a)\in [0,T)\times\R^d\times \mathcal{A}$, denote by
$\phi^{s,\xi,a}$ the solution of 
\begin{align*}
(\phi^{s,\xi,a})^\prime(t)&=\widehat{f}(t,\phi^{s,\xi,a}(t),a(t))\text{ a.e.~on $(s,T)$}\\
\phi^{s,\xi,a}(s)&=\xi.
\end{align*}

Our optimal control problem is implicitly specified by 
the value function  $\widehat{v}:[0,T]\times\R^d\to\R$  
that is defined by
\begin{equation}\label{E:widehat:value}
\begin{split}
\widehat{v}(s,\xi)&:=\inf_{a\in\mathcal{A}} 
\Big[\int_s^T 
\exp\Big (-\int_s^t \widehat{\lambda}(r,\phi^{s,\xi,a}(r),a(r))\,\dd r\Big )
\,\widehat{\ell}(t,\phi^{s,\xi,a}(t),a(t))\,\dd t\\ &\qquad\qquad + 
\exp\Big (-\int_s^T \widehat{\lambda}(r,\phi^{s,\xi,a}(r),a(r))\,\dd r\Big )\,\widehat{h}(\phi^{s,\xi,a}(T))\Big].
\end{split}
\end{equation}

The corresponding HJB equation is
\begin{equation}\label{E:widehat:DiscountedHJB}
\begin{split}
-\frac{\partial}{\partial t} \widehat{u}(t,\xi)-
\widehat{H}\Big (t,\xi,\widehat{u}(t,\xi),
\frac{\partial}{\partial \xi} \widehat{u}(t,\xi)
\Big )
 &=0, 
 \quad (t,\xi)\in (0,T)\times\R^d,\\
\widehat{u}(T,\xi)&=\widehat{h}(\xi),\quad \xi\in\R^d,
\end{split}
\end{equation}
where $\widehat{H}:[0,T]\times\R^d\times\R\times\R^d\to\R$ is defined by
\begin{align*}
\widehat{H}(t,\xi,y,z):=
\inf_{a\in A} [\widehat{\ell}(t,\xi,a)+(\widehat{f}(t,\xi,a),z)-
\widehat{\lambda}(t,\xi,a)\,y].
\end{align*}

\begin{theorem}
The value function $\widehat{v}$ is the unique $C_{\widehat{f}}\,$-minimax solution of 
\eqref{E:widehat:DiscountedHJB}.
\end{theorem}
\begin{proof}
Define $v:[0,T]\times\Omega\to\R$ by
\begin{align*}
v(t,x):=\widehat{v}(t,x(t)).
\end{align*}
Also define $(f,\lambda,\ell):[0,T]\times\Omega\times A\to\R^d\times\R_+\times\R$ and $h:\Omega\to\R$ by
\begin{align}\label{E:widehat:data}
(f,\lambda,\ell)(t,x,a):=(\widehat{f}, \widehat{\lambda},\widehat{\ell})(t,x(t),a)\text{ and }
h(x):=\widehat{h}(x(T)).
\end{align}
Then $v$ is the value function of the path-dependent optimal control problem in section~\ref{S:OptimalControl}
with data specified by \eqref{E:widehat:data}, i.e., $v$ satisfies \eqref{E:value}.
Note that  $(f,\lambda,\ell)$ and $h$ satisfy Assumption~\ref{A:DOC}
with $C_f=C_{\widehat{f}}$,
$C_\lambda=C_{\widehat{\lambda}}$, and 
$L_f=L_{\widehat{f}}$ thanks to Assumption~\ref{A:widehat:DOC}.
Thus, by Theorem~\ref{T:DiscoundedVerification}, 
$v$ is the unique minimax $C_f$-solution of \eqref{E:DiscountedHJB}
with $H:[0,T]\times\Omega\times\R\times\R^d\to\R$ defined by
\begin{align*}
H(t,x,y,z):=\widehat{H}(t,x(t),y,z).
\end{align*}
Hence, by Proposition~\ref{P:widehat:consistency} (iv) and (v),
the value function $\widehat{v}$ is a minimax $C_{\widehat{f}}$\,-solution
of \eqref{E:widehat:DiscountedHJB}.
Finally,  the comparison principle in form of Proposition~\ref{P:widehat:consistency} (iii) yields
uniqueness.  This concludes the proof.
\end{proof}
}

\appendix
{\color{black}
\section{On the regularity of the value function in section~\ref{S:OptimalControl}}\label{S:Appendix:Regularity}
We establish the regularity of the value function $v$ defined in \eqref{E:value}.
Our approach is based on section~7 of \cite{BK18JFA},
especially on the proofs of Proposition~7.2
and Theorem~7.4 therein.
Compared to \cite{BK18JFA}, there are slight differences due to the discount factors.

We freely use here the data of the optimal control problem
in section~\ref{S:OptimalControl}.
Moreover, Assumption~\ref{A:DOC} is in force.

We start with a basic estimate concerning $\mathcal{X}^L(t_0,x_0)$.

\begin{lemma}\label{L:Appendix:XL}
Let  $(t_0,x_0)\in [0,T]\times\Omega$, $L\ge 0$, $x\in\mathcal{X}^L(t_0,x_0)$, and $t\in [t_0,T]$. Then
 \begin{align}\label{E:L:Appendix:XL}
 \norm{x(\cdot\wedge t)}_\infty\le (LT+\norm{x_0(\cdot\wedge t_0)}_\infty)\,e^{L(t-t_0)}.
 \end{align}
\end{lemma}
\begin{proof}
By the definition of $\mathcal{X}^L(t_0,x_0)$
in \eqref{E:XLs0x0}, we can infer that, for each $s\in [t_0,T]$,
\begin{align*}
 \norm{x(\cdot\wedge s)}_\infty\le \norm{x(\cdot\wedge t_0)}_\infty+\int_{t_0}^s L(1+ \norm{x(\cdot\wedge r)}_\infty)\,\dd r.
\end{align*}
Gronwall's inequality yields \eqref{E:L:Appendix:XL}.
\end{proof}

Next, we establish regularity of $\phi^{t,x,a}$ in $x$ as well as in $t$.
\begin{lemma}\label{L:Appendix:Reg:x}
Fix $t_0\in [0,T)$, $x_0$, $\tilde{x}_0\in\Omega$, and $a\in\mathcal{A}$.
Then, for all $t\in [t_0,T]$, 
\begin{align}\label{E:L:Appendix:Reg:x}
\abs{\phi^{t_0,x_0,a}(t)-\phi^{t_0,\tilde{x}_0,a}(t)}\le e^{L_f\, (t-t_0)}\,\sup_{r\le t_0} 
\abs{x_0(r)-\tilde{x}_0(r)}.
\end{align}
\end{lemma}
\begin{proof}
Let $t\in [t_0,T]$
and put $\Delta\phi(s):=\phi^{t_0,x_0,a}(s)-\phi^{t_0,\tilde{x}_0,a}(s)$
for each $s\in [0,T]$. By \eqref{E:phi:prime:f} and 
Assumption~\ref{A:DOC}, we have
\begin{align*}
\sup_{r\le t} \abs{\Delta \phi(r)}&\le
\sup_{r\le t_0} \abs{x_0(r)-\tilde{x}_0(r)}+\int_{t_0}^t
\abs{f(s,\phi^{t_0,x_0,a},a(s))-f(s,\phi^{t_0,\tilde{x}_0,a},a(s))}\,\dd s\\
&\le \sup_{r\le t_0} \abs{\Delta \phi(r)}+\int_{t_0}^t
L_f\, \sup_{r\le s} \abs{\Delta \phi(r)}\,\dd s.
\end{align*}
By Gronwall's inequality, we immediately have \eqref{E:L:Appendix:Reg:x}.
\end{proof}

\begin{lemma}\label{L:Appendix:Reg:t1}
Let $0\le t_0\le  t_1\le T$, $x_0\in\Omega$, $a\in\mathcal{A}$,
$L\ge 0$, and $x\in\mathcal{X}^L(t_0,x_0)$.
Then,  for every $t\in [t_1,T]$, we have
\begin{align*}
&\sup_{s\le t} \abs{\phi^{t_0,x_0,a}(s)-\phi^{t_1,x,a}(s)}\\&\quad\le
(C_f+L)\Big [1+
\Big (
e^{C_f(t-t_0)}+e^{L(t-t_0)}
\Big )
\Big (C_fT+LT+\norm{x_0(\cdot\wedge t_0)}_\infty\Big )\Big ]
\, (t_1-t_0).
\end{align*}
\end{lemma}
\begin{proof}
Let $t\in [t_1,T]$. By Lemma~\ref{L:Appendix:Reg:x}, 
\begin{align*}
\abs{
\phi^{t_0,x_0,a}(t)-\phi^{t_1,x,a}(t)
}&=|
\phi^{t_1,(\phi^{t_0,x_0,a}),a}(t)-\phi^{t_1,x,a}(t)
|\\
&\le e^{L_f (t-t_1)}\,\sup_{r\le t_1}\abs{
\phi^{t_0,x_0,a}(r)-x(r)
}.
\end{align*}
Since, for every $r\in [t_0,t_1]$, by Assumption~\ref{A:DOC},
\begin{align*}
&\abs{
\phi^{t_0,x_0,a}(r)-x(r)
}=\abs{
\int_{t_0}^r f(\theta,\phi^{t_0,x_0,a},a(\theta))\,\dd \theta
-\int_{t_0}^r x^\prime(\theta)\,\dd \theta
}\\
&\le (r-t_0)\, C_f\, \Big (1+\sup_{\theta\le r} \abs{\phi^{t_0,x_0,a}(\theta)}\Big )+(r-t_0)\, L\, \Big (1+\sup_{\theta\le r} \abs{x(\theta)}\Big )\\
&\le (r-t_0)\, (C_f+L)\, \Big [1+
(e^{C_f(r-t_0)}+e^{L(r-t_0)}) \Big (C_fT+LT+\sup_{\theta\le t_0} \abs{x_0(\theta)}\Big )\Big ],
\end{align*}
where the last inequality follows from 
Lemma~\ref{L:Appendix:XL} together with  noting that we have
 $\phi^{t_0,x_0,a}\in\mathcal{X}^{C_f}(t_0,x_0)$ due to Assumption~\ref{A:DOC}.
This concludes the proof.
\end{proof}

\begin{proof}[Proof of Proposition~\ref{P:value:regular}]
We  start the proof by establishing two claims concerning the value function $v$
defined in \eqref{E:value}.\newline

\noindent\textit{Claim 1: There is constant $C_1>0$ such that,
for all $(s,x,\tilde{x})\in [0,T]\times\Omega\times\Omega$,} 
\begin{align}\label{E:Appendix:Claim1:1}
\abs{v(s,x)-v(s,\tilde{x})}\le  C_1(1+\norm{\tilde{x}(\cdot\wedge s)}_\infty)\, \norm{x(\cdot\wedge s)-\tilde{x}(\cdot\wedge s)}_\infty.
\end{align}

\noindent\textit{Proof of Claim 1.} Fix  $(s,x,\tilde{x})\in [0,T]\times\Omega\times\Omega$ and $a\in\mathcal{A}$. Put
\begin{align*}
(\chi,\lambda,\ell,\phi)&:=(\chi^{s,x,a},\lambda^{s,x,a},\ell^{s,x,a},\phi^{s,x,a}),\\
(\tilde{\chi},\tilde{\lambda},\tilde{\ell},\tilde{\phi})&:=(\chi^{s,\tilde{x},a},\lambda^{s,\tilde{x},a},\ell^{s,\tilde{x},a},\phi^{s,\tilde{x},a}),\\
(\Delta\chi,\Delta\ell, \Delta h,\Delta x)&:=(\chi-\tilde{\chi},\ell-\tilde{\ell},h(\phi)-h(\tilde{\phi}), \norm{x(\cdot\wedge s)-\tilde{x}(\cdot\wedge s)}_\infty).
\end{align*}
We  show that there is a constant $C_1>0$ independent of $s$, $x$, $\tilde{x}$, and $a$ such that
\begin{equation*}
\begin{split}
\Delta J&:=\Big [\int_s^T \chi(t)\ell(t)\,\dd t+\chi(T)h(\phi)\Big ]
-\Big[\int_s^T \tilde{\chi}(t)\tilde{\ell}(t)\,\dd t+\tilde{\chi}(T)h(\tilde{\phi})\Big]\\
&\le C_1(1+\norm{\tilde{x}(\cdot\wedge s)}_\infty)\Delta x,
\end{split}
\end{equation*}
which in turn will  imply \eqref{E:Appendix:Claim1:1}. 
To this end, we follow the proof of Lemma~5.6 of \cite{BK2},
which is a  slightly more involved version of Claim~1. As in
\cite[p.~31]{BK2}, we have
\begin{align*}
\Delta J=I_1+I_2,
\end{align*}
where
\begin{align*}
I_1&:=\chi(T)\,\Delta h+ \Delta\chi(T)\,h(\tilde{\phi}),\\
I_2&:=\int_s^T \chi(t)\,\Delta\ell(t)+\Delta\chi(t)\,\tilde{\ell}(t)\,\dd t
\end{align*}
and, thanks to Lemmas~\ref{L:Appendix:Reg:x} and \ref{L:Appendix:XL}, Assumption~\ref{A:DOC}, 
\begin{align}\label{E:chi:Davis}
\Delta\chi(t)\le L_f(t-s)e^{L_f(t-s)}\Delta x,
\quad t\in [s,T],\qquad
\text{(see p.~174 in \cite{DavisBook})}
\end{align}
as well as noting that $\abs{h(\tilde{\phi})}\le L_f\norm{\tilde{\phi}}_\infty+\abs{h(\mathbf{0})}$, we have
\begin{align*}
I_1&\le L_f e^{L_f T}\Delta x+\left(L_f Te^{L_f T} \Delta x\right)\,
\left[
L_f (C_f T+\norm{\tilde{x}(\cdot\wedge s)}_\infty) e^{C_f T}+\abs{h(\mathbf{0})} 
\right],  \\
I_2&\le  L_f T e^{L_f T}\Delta x+ \left(L_f T^2 e^{L_f T}\Delta x\right)\,
\left[
C_f+ C_f^2 T e^{LT}+C_f e^{LT}\norm{\tilde{x}(\cdot\wedge s)}_\infty
\right].
\end{align*}
Here, $\mathbf{0}$ is the zero path in $\Omega$.
This concludes the proof of Claim~1.\newline

\noindent\textit{Claim 2:  Given $L\ge 0$,  
there is a constant $C_2>0$ such that $x_0\in\Omega$,
 $0\le s\le t\le T$ and $x\in\mathcal{X}^L(s,x_0)$ imply}
\begin{align}\label{E:Appendix:Claim2:1}
\abs{v(s,x)-v(t,x)}\le  C_2(1+\norm{x_0(\cdot\wedge s)}_\infty)^2\,(t-s).
\end{align}

\noindent\textit{Proof of Claim 2.}
Fix $L\ge 0$, $x_0\in\Omega$, and $a\in\mathcal{A}$.  Let $0\le s\le t\le T$ and $x\in\mathcal{X}^L(s,x_0)$. Put
\begin{align*}
(\chi^s,\lambda^s,\ell^s,\phi^s)&:=(\chi^{s,x,a},\lambda^{s,x,a},\ell^{s,x,a},\phi^{s,x,a}),\\
(\chi^t,\lambda^t,\ell^t,\phi^t)&:=(\chi^{t,x,a},\lambda^{t,x,a},\ell^{t,x,a},\phi^{t,x,a}),\\
(\Delta\chi^s,\Delta\ell^s, \Delta h^s)&:=
(\chi^s-\chi^t,\ell^s-\ell^t,h(\phi^s)-h(\phi^t)).
\end{align*}
We 
show that there is a constant $C_2>0$ independent of $s$, $t$, $x_0$, $x$, and $a$ such
that
\begin{align}\label{E:Appendix:Claim2:2}
\abs{\Delta J^s} \le C_2(1+\norm{x_0(\cdot\wedge s)}_\infty)^2\,(t-s),
\end{align}
where
\begin{align*}
\Delta J^s:=\Big[
\int_s^T \chi^s(r)\,\ell^s(r)\, \dd r +\chi^s(T) h(\phi^s)\Big] 
-\Big[
\int_t^T \chi^t(r)\,\ell^t(r)\, \dd r +\chi^t(T) h(\phi^t)\Big].
\end{align*}
This will immediately yield \eqref{E:Appendix:Claim2:1}.
To show \eqref{E:Appendix:Claim2:2}, we proceed similarly as in the proof of Claim~1.
First note that, by Assumption~\ref{A:DOC} and Lemma~\ref{L:Appendix:XL},  we have
\begin{align*}
\abs{\Delta J^s}&\le \abs{\int_s^t \chi^s(r)\,\ell^s(r)\,\dd r} +\abs{J_1}+ \abs{J_2}\\
&\le  C_f(t-s)(1+\norm{\phi^s(\cdot\wedge t)}_\infty) + \abs{J_1}+\abs{J_2}\\
&\le \tilde{C}_0 (t-s)(1+\norm{x_0(\cdot\wedge s)}_\infty) +\abs{J_1}+\abs{J_2},
\end{align*}
where $\tilde{C}_0>0$ is a constant that is  independent of $s$, $t$, $x_0$,  $x$, and $a$, and
\begin{align*}
J_1&:=\int_t^T \chi^s(r)\,\Delta\ell^s(r)+\Delta\chi^s(r)\,\ell^t(r)\,\dd r,\\
J_2&:=\chi^s(T)\,\Delta h^s+\Delta \chi^s(T)\,h(\phi^t).
\end{align*}
To estimate $J_1$ and $J_2$, we will use the fact that, for each $r\in [t,T]$,
\begin{align*}
\Delta \chi^s(r)&=\left[e^{-\int_s^r \lambda^{s,x,a}(\theta)\,\dd \theta}
-e^{-\int_t^r \lambda^{s,x,a}(\theta)\,\dd \theta}\right]\\
&\qquad -\left[
e^{-\int_t^r \lambda^{t,x,a}(\theta)\,\dd \theta}
-e^{-\int_t^r \lambda^{s,x,a}(\theta)\,\dd \theta}
\right]\\
&=\left[
\int_s^t (-\lambda^{s,x,a})(\theta)\,e^{-\int_\theta^r \lambda^{s,x,a}(\tau)\,\dd \tau}\,\dd \theta
\right]\\&\qquad -\left[
e^{-\int_t^r \lambda^{t,x,a}(\theta)\,\dd \theta}
-e^{-\int_t^r \lambda^{t,(\phi^{s,x,a}),a}(\theta)\,\dd \theta}
\right]
\end{align*}
and thus, by Assumption~\ref{A:DOC}, by \eqref{E:chi:Davis}, and by Lemma~\ref{L:Appendix:Reg:t1},
\begin{equation}\label{E:chi:Claim2}
\begin{split}
\abs{\Delta \chi^s(r)}&\le C_\lambda (t-s)+ L_f Te^{L_fT} \norm{x(\cdot\wedge t)-\phi^{s,x,a}(\cdot\wedge t)}_\infty \\
&\le  C_\lambda(t-s)+ \tilde{C}_1(1+\norm{x_0(\cdot\wedge s_0)}_\infty) (t-s)
\end{split}
\end{equation}
for  some constant $\tilde{C}_1>0$ independent of $s$, $t$, $x_0$,  $x$, and $a$.
Now, we estimate $J_1$. By Assumption~\ref{A:DOC},  by Lemma~\ref{L:Appendix:Reg:t1},  by \eqref{E:chi:Claim2},
and by Lemma~\ref{L:Appendix:XL}, we have
\begin{align*}
\abs{J_1}&\le  TL_f \norm{\phi^s-\phi^t}_\infty + T(t-s)
[C_\lambda+ \tilde{C}_1(1+\norm{x_0(\cdot\wedge s_0)}_\infty)] C_f (1+\norm{\phi^{t,x,a}}_\infty)\\
&\le \tilde{C}_2(1+\norm{x_0(\cdot\wedge s)}_\infty)^2(t-s)
\end{align*}
for some constant $\tilde{C}_2>0$ independent of $s$, $t$, $x_0$,  $x$, and $a$. 
Finally, we estimate $J_2$. 
By Assumption~\ref{A:DOC}, by \eqref{E:chi:Claim2}, by Lemma~\ref{L:Appendix:Reg:t1},
and by Lemma~\ref{L:Appendix:XL}, we have, with $\mathbf{0}$ being the zero path in $\Omega$,
\begin{align*}
\abs{J_2}&\le L_f\norm{\phi^s-\phi^t}_\infty+ 
(t-s) [C_\lambda + \tilde{C}_1(1+\norm{x_0(\cdot\wedge s_0)}_\infty]\, (\abs{h(\mathbf{0})}+L_f\norm{\phi^t}_\infty)\\
&\le  \tilde{C}_3 (1+\norm{x_0(\cdot\wedge s)}_\infty)^2 (t-s)
\end{align*}
for some constant $\tilde{C}_3>0$ independent of $s$, $t$, $x_0$,  $x$, and $a$.
Consequently \eqref{E:Appendix:Claim2:2} holds,
which concludes the proof of Claim~2.\newline



Now, fix $L\ge 0$ and $x_0\in\Omega$. Let $0\le t_0\le t_1\le T$.
Let $x$, $\tilde{x}\in\mathcal{X}^L(t_0,x_0)$. Recall the constants $C_1$
and $C_2$ from Claims~1 and 2.
Put
\begin{align*}
C_{L,x_0}:=\max\{C_1[1+(LT+\norm{x_0(\cdot\wedge t_0)}_\infty) e^{LT}], C_2(1+\norm{x_0(\cdot\wedge t_0)})^2\}.
\end{align*}
By
 Claim~1 and  Lemma~\ref{L:Appendix:XL}, 
\begin{align*}
\abs{v(t_1,x)-v(t_1,\tilde{x})}\le  
\underbrace{C_1[1+(LT+\norm{x_0(\cdot\wedge t_0)}_\infty) e^{LT}]}_{
\le C_{L,x_0}
}\,\norm{x(\cdot\wedge t_1)-\tilde{x}(\cdot\wedge t_1)}_\infty.
\end{align*}
 Claim~2 states that
\begin{align*}
\abs{v(t_0,x)-v(t_1,x)}\le C_2(1+\norm{x_0(\cdot\wedge t_0)})^2(t_1-t_0)\le C_{L,x_0} (t_1-t_0).
\end{align*}

It remains to check continuity of $v$. To this end, fix $(s_1,x_1)\in\Omega$.
We will verify that $v$ is continuous at $(s_1,x_1)$.
Let $(s_2,x_2)\in [0,T]\times\Omega$.
We distinguish between two cases.

\textit{Case 1:} $s_1<s_2$. Then, noting that $x_1(\cdot\wedge s_1)\in\mathcal{X}^L(s_1,x_1)$ with $L=0$, we have
\begin{align*}
&\abs{v(s_1,x_1)-v(s_2,x_2)}\le  
\abs{v(s_1,x_1)-v(s_2,x_1(\cdot\wedge s_1))}+\abs{v(s_2,x_1(\cdot\wedge s_1))-v(s_2,x_2)}\\
&\qquad\qquad\le C_{0,x_1} (s_2-s_1)+C_1 (1+\norm{x_1(\cdot\wedge s_1)}_\infty)
\norm{x_1(\cdot\wedge s_1)-x_2(\cdot\wedge s_2)}_\infty
\end{align*}
thanks to Claims~1 and 2.

\textit{Case 2:} $s_2\le s_1$. Then, parallel to Case 1, we have
\begin{align*}
&\abs{v(s_1,x_1)-v(s_2,x_2)}\le 
\abs{v(s_2,x_2)-v(s_1,x_2(\cdot\wedge s_2))}+\abs{v(s_1,x_2(\cdot\wedge s_2))-v(s_1,x_1)}\\
&\qquad\qquad\le C_2(1+\norm{x_2(\cdot\wedge s_2)})^2 (s_1-s_2)\\
&\qquad\qquad\qquad\qquad + C_1 (1+\norm{x_1(\cdot\wedge s_1)}_\infty)
\norm{x_1(\cdot\wedge s_1)-x_2(\cdot\wedge s_2)}_\infty\\
&\qquad\qquad
\le C_2(1+\norm{x_1(\cdot\wedge s_1)}_\infty+\norm{x_1(\cdot\wedge s_1)-x_2(\cdot\wedge s_2)}_\infty)^2 (s_1-s_2)\\
&\qquad\qquad\qquad\qquad + C_1 (1+\norm{x_1(\cdot\wedge s_1)}_\infty)
\norm{x_1(\cdot\wedge s_1)-x_2(\cdot\wedge s_2)}_\infty
\end{align*}
thanks to Claims~1 and 2.

From the estimates in the two cases above, we can immediately deduce that $v$ is continuous at $(s_1,x_1)$.
This concludes the proof of Proposition~\ref{P:value:regular}.
\end{proof}
}

 \bibliographystyle{amsplain}
\bibliography{PDPI}

\textbf{\large Statements and Declarations.}

\textbf{Funding.}
The research of the first named author was partially supported by the 2018 GNAMPA-INdAM project \textit{Controllo ottimo stocastico con osservazione parziale: metodo di randomizzazione ed equazioni di Hamilton-Jacobi-Bellman sullo spazio di Wasserstein} and the 2024 GNAMPA-INdAM \textit{Problemi di controllo ottimo in dimensione infinita.}
The research of the second named author was
 supported in part  by NSF-grant DMS-2106077.
 
 \textbf{Competing Interests.} The authors declare that they do not have any competing interests.
 \end{document}